\journal{Journal of Multivariate Analysis}
\def\supplementfilename{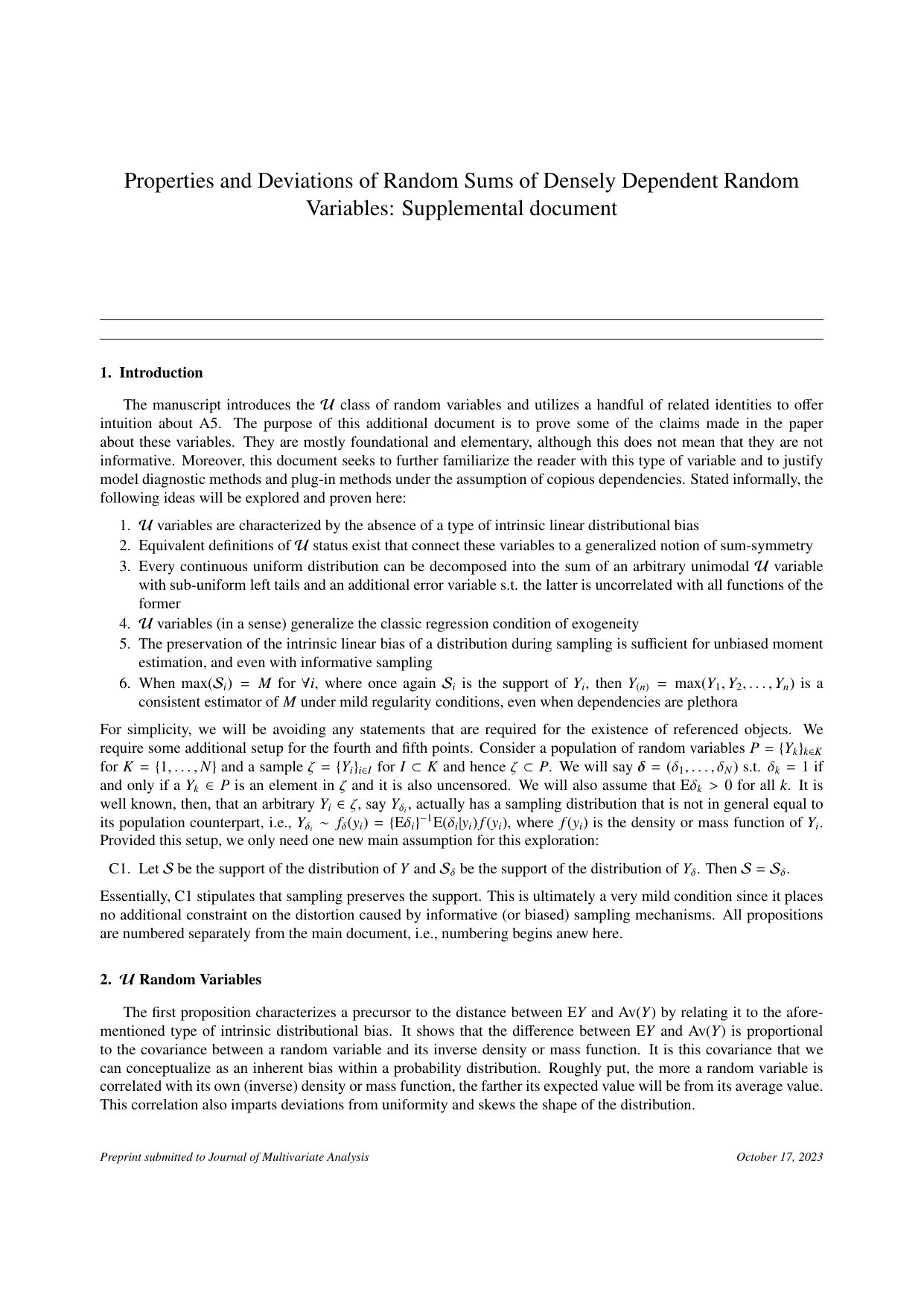}
\def\numbersupplementpages{\the\pdflastximagepages}
\newif\ifarXiv
\theoremstyle{plain}% Theorem-like structures provided by amsthm.sty
\newtheorem{theorem}{Theorem}
\newtheorem{proposition}{Proposition}
\newtheorem{lemma}{Lemma}
\newtheorem{corollary}{Corollary}
\theoremstyle{definition}
\newtheorem{definition}{Definition}
\newtheorem{example}{Example}
\begin{document}

\begin{frontmatter}

\title{Properties and Deviations of Random Sums of Densely Dependent Random Variables}

\author[1]{Shane Sparkes}
\author[1]{Lu Zhang}

\address[1]{University of Southern California, Department of Population and Public Health Sciences, Los Angeles}

\cortext[mycorrespondingauthor]{Shane Sparkes. Email address: \url{sgugliel@usc.edu}}

\begin{abstract}
A classical problem of statistical inference is the valid specification of a model that can account for the statistical dependencies between observations when the true structure is dense, intractable, or unknown. To address this problem, a new variance identity is presented, which is closely related to the Moulton factor. This identity does not require the specification of an entire covariance structure and instead relies on the choice of two summary constants. Using this result, a weak law of large numbers is also established for additive statistics and common variance estimators under very general conditions of statistical dependence. Furthermore, this paper proves a sharper version of Hoeffding's inequality for symmetric and bounded random variables under these same conditions of statistical dependence. Put otherwise, it is shown that, under relatively mild conditions, finite sample inference is possible in common settings such as linear regression, and even when every outcome variable is statistically dependent with all others. All results are extended to estimating equations. Simulation experiments and an application to climate data are also provided.   
\end{abstract}

\begin{keyword} %alphabetical order
Dependent random variables\sep
linear statistics \sep
concentration inequalities \sep
finite sample inference \sep
estimating equations \sep
variance identity \sep
uniform convergence in probability
\MSC[2020] Primary 62E99 \sep
Secondary 62F12
\end{keyword}

\end{frontmatter}

\section{Introduction\label{sec:1}}

Popular methods for statistical inference model probabilistic dependencies as limited and schematic in nature. The latter notion is used to justify asymptotic normality, while the former reduces an unknowable picture to one that is tame and mathematically pliable. However, in the now everyday words of George Box, "... all models are wrong" \citep{box1987empirical}. In many research contexts, there is little reason to believe that the system of statistical dependencies governing the joint distribution of outcome variables behaves in accordance with a tractable sequence, or that it is sparse even conditionally. Sociological, climate, or clinical settings are but a few examples. Box of course went on to state that 'some are useful.' However, this final addendum, while by construction irrefutable, requires great caution when cited besides models of statistical uncertainty. Invalid models for the expected values of outcome variables still possess salient interpretations as approximations. On the other hand, an invalid model for statistical dependence will furnish untrue statements about error, or statements in general that are exceptionally vulnerable to doubt. This undercuts the cogency of knowledge construction.

This paper addresses this problem for additive statistics, which we now define. Let $I = \{1, \ldots, n \}$ be an indexing set for a sample $\zeta = \{ Y_i \}_{i \in I}$ of random variables. Furthermore, let $\{ w_i \}_{i \in I}$ be any set of constants. We define $S_n = \sum_{i=1}^n w_i Y_i$ as an additive statistic, although for simplicity, we will often set $w_i =1$ without loss of generality (WLOG) since we can simply say $Z_i = w_i Y_i$ and reason about $\{ Z_i\}_{i \in I}$. It is important to note that $Y_i$ $(Z_i)$ is also general and can be any measurable function of $k \in \mathbb{N}$ random variables. Ultimately, we are interested in establishing some basic properties and behaviors of $\mathbf{S_n}$, a vector of random sums, under very general but unfavorable conditions of dependence. We do so without any particular theory of how this looks, which makes these results widely applicable. 

Lots of work exists on the topic of dependence and additive statistics. The generalized least squares approach encapsulates common classes of estimators. Ultimately, this method replaces $\text{Var}(\mathbf{S_n})$ with a model $\tilde{\boldsymbol{\Sigma}}$ in an attempt to induce the conditions of the Gauss-Markov theorem \citep{aitken1936iv, amemiya1985generalized}. Generalized linear models and the generalized estimating equation approach of \citet{liang1986longitudinal} produce estimators related to this type, as do hierarchical linear models \citep{nelder1972generalized, gardiner2009fixed}. The approach of Liang and Zeger, however, also makes use of cluster-robust standard errors in conjunction with specified covariance models \citep{ziegler1998generalised, zorn2001generalized}. These strategies posit that a user-specified partition of the sample results in independent clusters, the variances of which can be identified. A thorough review of the theory behind cluster-robust variance estimation is available elsewhere \citep{mackinnon2023cluster}. Spatial and time series methods are also important in this universe. They adopt objects---such as variograms or auto-regressive weight matrices---to facilitate covariance estimation under the supposition of largely localized dependencies \citep{cressie2015statistics, kedem2005regression, anselin2009spatial}. The details of these methods are beyond this paper's scope. In essence, they share in one or two pivotal facts, however: they usually posit that the average number of dependencies in a sample is bounded by a constant and that the researcher has knowledge of a partition of $\zeta$ that produces $K(n) \to \infty$ independent clusters, or (2) they replace $\text{Var}(\mathbf{S_n})$ with a blueprint that is salient, but simple and mathematically convenient. Overall, these strategies set a grand majority of the $n \choose 2$ covariance parameters to zero \textit{a priori} in protestation of real conditions. Inference also depends on mixing conditions or concepts such as $m$-dependence, which justify a central limit theorem for dependent random variables \citep{withers1981central, berk1973central}.

Researchers have also utilized the concentration of measure phenomenon as a tool for inference \citep{ledoux2001concentration}. Hoeffding's inequality is a classical result in this domain, as are others \citep{hoeffding1994probability, bennett1962probability}. Most formulations rely on the assumption of mutual independence \citep{janson2011random, boucheron2003concentration, talagrand1996new}. Many, however, also allow for weak or local dependence conditions to exist \citep{daniel2014concentration, kontorovich2008concentration, gotze2019higher}. For instance, Hoeffding's inequality applies to sums of negatively associated random variables and to additive statistics that can be decomposed into smaller independent sums \citep{wajc2017negative, janson2004large}. These results are invaluable for theoretically bounding the tail probabilities of random sums under a wider range of circumstances. Unfortunately, however, most of these inequalities still rely upon restricted dependency pictures that do not fit the concerns of this paper.

The main contribution of this manuscript is to show that finite sample inference is possible for additive statistics of bounded random variables without a detailed dependency model, and even when \textit{all} measured outcome variables are statistically dependent and no central limit theorem applies. Essentially, this is done by showing that a sharper version of Hoeffding's inequality still applies to $\mathbf{S_n} - \text{E}\mathbf{S_n}  = \boldsymbol{\epsilon}$ when $\boldsymbol{\epsilon}$ is densely dependent, insofar as each $\epsilon_i$ behaves in accordance with some symmetric, unimodal, but not necessarily identical probability law. No other restrictions are necessarily placed on the marginal or joint probability functions. While still non-trivial, this contribution is useful since random errors of this type often surface in regression settings. Additionally, Hoeffding's inequality possesses a closed form that is accessible to working statisticians. The paper's secondary contribution is a novel variance identity that is useful for analyzing the properties of $\mathbf{S_n}$ under very general and terrible conditions. Extending these results to estimating equations is this paper's tertiary contribution.

Section~\ref{sec:2} introduces some key definitions and the aforementioned variance identity, while Section~\ref{sec:3} uses it to prove a weak law of large numbers (WLLN) for additive statistics under the circumstances identified, and to explore the behavior of cluster-robust variance estimators in these same settings. Section~\ref{sec:4} returns to our main theme of statistical inference in the face of dense, unknown, and intractable dependency structures. It defines a new type of random variable and uses this definition to prove our main results. Following this, Section~\ref{sec:5} makes use of the work of \citet{jennrich1969asymptotic}, \citet{yuan1998asymptotics}, and \citet{hall2005generalized} to extend the statements of the previous sections to estimating equations. The last part of this paper before the conclusion---Section~\ref{sec:6}---demonstrates the value of our approach with a set of simulation experiments that mimic some worst-case dependency scenarios with linear estimators. After this is accomplished, the association between global changes in temperature and carbon dioxide levels is estimated to demonstrate the utility of the approach.

\section{A Novel Variance-Covariance Identity for Additive Statistics \label{sec:2}}

We now introduce some important definitions. Say $\mathbf{Y}^{\top} = ( Y_1, \ldots, Y_n )$ is a $1 \times n$ random vector s.t. $\text{E}Y_i^2 < \infty$ for $\forall i$ and $\mathbf{w} \in \mathbb{R}^{p \times n}$ is a matrix of constants. Furthermore, as is tradition, say $\text{Cov}(Y_i, Y_j) = \sigma_{i, j} = \text{E}Y_i Y_j - \text{E}Y_i\text{E}Y_j$ and hence $\text{Var}(Y_i) = \sigma_{i, i} = \sigma_i^2$. Graphs are important for this exploration. A graph $\text{G} = (\text{V}, \text{L})$ is constituted by a set of nodes V and a set of lines, L, that connect them. It is undirected if $\text{e}_{i, j} \in \text{L}$ implies that $\text{e}_{j, i} \in \text{L}$; otherwise it is directed. Here, only undirected graphs will be of interest. Also, note that $I = \{1, \ldots, n \}$ can now be seen as a node set. The definition presented next is central and allows for the construction of the variance identity.

\begin{definition}[Linear Dependency Graph]
Let $\mathcal{L} = (I, \text{L})$ be a graph with a node set $I$   w.r.t. $\{ Y_i \}_{i \in I}$ and a set of edges $L$ between them. Then $\text{e}_{i, j} \in \text{L}$ if and only if $\sigma_{i, j} \neq 0$.
\end{definition}

It is also relevant to know that the degree of a node is defined as the sum of its existent links. Denote this function as $d(i) = \sum^{n-1}_{j=1} 1_{e_{i,j} \in L}$, where $1_{e_{i,j} \in L}$ is an indicator function, and similarly denote the mean degree as $\mu_n = n^{-1} \sum_{i=1}^{n} d(i)$. In the context of this paper, $\mu_n$ is equal to the mean number of random variables that a typical random variable is correlated with in the sample. Pertinently, each $1_{e_{i,j}}$ will be treated as a non-stochastic function, conditional on the realization of $\{ Y_i \}_{i \in I}$ as a sample of random variables, unless otherwise noted.

A few more definitions are necessary. The $\odot$ symbol will signify the Hadamard product, which is the component-by-component multiplication of two matrices. Letting $\mathbf{w}_s$ denote the $s$th row of $\mathbf{w}$, consider $\bar{\sigma}_{r, t} = |L|^{-1} \sum^{|L|}_{i < j} w_{r, i} w_{t, j} \sigma_{i, j} $ w.r.t. the statistics $\mathbf{w}_r \mathbf{Y}$ and $\mathbf{w}_t \mathbf{Y}$. This value will be called an average non-zero covariance term, while $\phi_{r, t} = \{n^{-1} \sum_{i=1}^n w_{r, i} w_{t, i} \sigma_i^2 \}^{-1} \cdot \bar{\sigma}_{r, t}$ will be called an average correlation. Ultimately, three matrices will also be required for the identity: $\mathbf{G}, \mathbf{C}$, and $\mathbf{V}$. The first two matrices are defined from the previously specified quantities: $\mathbf{C}^{p \times p} = (\bar{\sigma}_{i, j})$ and $\mathbf{G} = \boldsymbol{1} + \mu_n \boldsymbol{\phi}$, where $\boldsymbol{1}$ is a $p \times p$ matrix of ones and $\boldsymbol{\phi}^{p \times p} = (\phi_{i, j})$. Finally, $\mathbf{V}$ is a diagonal matrix s.t. its diagonal is equal to the diagonal of $\text{Var}(\mathbf{Y})$. This last matrix is recognizable as $\text{Var}(\mathbf{Y})$ under the counterfactual assumption of mutual independence.

\begin{proposition}
Let $\mathbf{Y} = ( Y_1, \ldots, Y_n )^{\top}$ be a $n \times 1$ random vector s.t. $\text{E}Y_i^2 < \infty$ for $\forall i$ and $\mathbf{w} \in \mathbb{R}^{p \times n}$ is a matrix of constants. Then $\text{Var}(\mathbf{w}\mathbf{Y}) = \mathbf{w} \mathbf{V} \mathbf{w}^{\top} \odot \mathbf{G} =  \mathbf{w} \mathbf{V} \mathbf{w}^{\top} + n \mu_n \mathbf{C}$.
\end{proposition}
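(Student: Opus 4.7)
The plan is to verify both equalities by direct expansion of the $(r,t)$-entry of $\text{Var}(\mathbf{wY})$ using the bilinearity of covariance, and then rewriting the result in terms of the graph-based summary constants $\mu_n$, $\bar\sigma_{r,t}$, and $\phi_{r,t}$ from the section preamble. No auxiliary tool is needed beyond this and the handshake identity for degree sums in an undirected graph; fundamentally this is a bookkeeping argument.

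First I would fix indices $r,t \in \{1,\ldots,p\}$ and write
\[
\text{Var}(\mathbf{wY})_{r,t} \;=\; \text{Cov}(\mathbf{w}_r \mathbf{Y},\, \mathbf{w}_t \mathbf{Y}) \;=\; \sum_{i=1}^n \sum_{j=1}^n w_{r,i}\, w_{t,j}\, \sigma_{i,j}.
\]
Splitting this bilinear sum into a diagonal contribution ($i = j$) and an off-diagonal contribution ($i \neq j$), the diagonal piece is exactly $\sum_{i} w_{r,i} w_{t,i} \sigma_i^2 = (\mathbf{wVw}^{\top})_{r,t}$, since $\mathbf{V}$ is diagonal with entries $\sigma_i^2$. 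The off-diagonal piece can be restricted, without altering its value, to those ordered pairs $(i,j)$ with $\text{e}_{i,j} \in \text{L}$, because every other pair contributes $\sigma_{i,j} = 0$ by the definition of the linear dependency graph.

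The next step is to show this restricted off-diagonal sum equals $n\mu_n \bar\sigma_{r,t}$. By the handshake identity, the total number of ordered neighbor pairs in $\mathcal{L}$ is $\sum_i d(i) = n\mu_n$, so factoring this count out of the sum and recognising what remains as the average non-zero covariance term yields the second equality $\text{Var}(\mathbf{wY}) = \mathbf{wVw}^{\top} + n\mu_n \mathbf{C}$. For the Hadamard-product form, I would then divide the $(r,t)$-entry by $(\mathbf{wVw}^{\top})_{r,t}$, noting that this quantity is precisely $n$ times the denominator appearing in the definition of $\phi_{r,t}$. Consequently the multiplicative factor is $1 + \mu_n \phi_{r,t} = G_{r,t}$, which gives $\text{Var}(\mathbf{wY}) = \mathbf{wVw}^{\top} \odot \mathbf{G}$ entrywise and hence as matrices.

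The only substantive hazard is notational: one has to decide whether the indexing $\sum_{i<j}^{|L|}$ in the definition of $\bar\sigma_{r,t}$ runs over unordered edges or over ordered neighbor pairs. An unordered reading would spoil the identity whenever $r \neq t$ and $w_{r,i} w_{t,j} \neq w_{r,j} w_{t,i}$, since then $\sum_{i \neq j} w_{r,i} w_{t,j} \sigma_{i,j}$ is not simply twice the sum over $i < j$. I would therefore fix at the outset the convention that $|L|$ counts ordered neighbor pairs---equivalently, each undirected edge is enumerated twice---so that $|L| = n\mu_n$ and the counting closes cleanly. Once that convention is declared, each equality is at most two lines of algebra.
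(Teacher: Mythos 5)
Your proposal follows essentially the same route as the paper's own proof: split the bilinear expansion of $\text{Cov}(\mathbf{w}_r\mathbf{Y},\mathbf{w}_t\mathbf{Y})$ into diagonal and off-diagonal parts, restrict the latter to edges of $\mathcal{L}$, invoke the handshake lemma to convert the edge count into $n\mu_n$, and factor out $\sum_i w_{r,i}w_{t,i}\sigma_i^2$ to obtain the $1+\mu_n\phi_{r,t}$ form. Your closing remark about whether $\sum_{i<j}^{|L|}$ ranges over unordered edges or ordered neighbor pairs is a genuine subtlety that the paper's own proof glosses over (its step $\sum_{i\neq j}w_{s,i}w_{t,j}\sigma_{i,j}=2\sum_{i<j}w_{s,i}w_{t,j}\sigma_{i,j}$ requires the ordered-pair or symmetrized reading of $\bar\sigma_{s,t}$ when $s\neq t$), and fixing that convention as you do makes the argument airtight.
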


\begin{proof}
Let $s, t$ be arbitrary indexes from $\{1, \ldots, p \}$. Then $\mathbf{w}_s \mathbf{Y} = \sum_{i=1}^n w_{s, i} Y_i$ and $\mathbf{w}_t \mathbf{Y} = \sum_{i=1}^n w_{t, i} Y_i$. It then follows that $\text{Cov}(\mathbf{w}_s \mathbf{Y}, \mathbf{w}_t \mathbf{Y}) = \sum_{i=1}^n w_{s, i} w_{t, i} \sigma_i^2 + \sum_{i \neq j} w_{s, i} w_{t, j} \sigma_{i, j}$. From here, consider the linear dependency graph $\mathcal{L} = (I, L)$ w.r.t. $\{ Y_i \}_{i \in I}$ as previously defined. Then:
\begin{align*}
 \sum_{i \neq j} w_{s, i} w_{t, i} \sigma_{i, j} = 2  \sum_{i < j} w_{s, i} w_{t, j} \sigma_{i, j} = 2 \cdot ( \sum^{|L|}_{i < j} w_{s, i} w_{t, j} \sigma_{i, j} + 0 ) = 2 |L| \cdot \bar{\sigma}_{s, t} = n \mu_n \cdot  \bar{\sigma}_{s, t}
\end{align*}
The first equality follows from the definition of $\mathcal{L}$, while the fourth follows from the Handshake lemma. Hence, $\text{Cov}(\mathbf{w}_s \mathbf{Y}, \mathbf{w}_t \mathbf{Y}) = (\mathbf{w} \mathbf{V} \mathbf{w}^{\top})_{s, t} +  n \mu_n \mathbf{C}_{s, t}$. It then follows that $\text{Var}(\mathbf{w}\mathbf{Y}) =  \mathbf{w} \mathbf{V} \mathbf{w}^{\top} + n \mu_n \mathbf{C}$ since $s, t$ were arbitrary.

Now, note that  $\text{Cov}(\mathbf{w}_s \mathbf{Y}, \mathbf{w}_t \mathbf{Y}) = \sum_{i=1}^n w_{s, i} w_{t, i} \sigma_i^2 + n\mu_n \cdot \bar{\sigma}_{s, t} = \{1 + n \mu_n \cdot (\sum_{i=1}^n w_{s, i} w_{t, i} \sigma_i^2)^{-1} \bar{\sigma}_{s, t} \} \sum_{i=1}^n w_{s, i} w_{t, i} \sigma_i^2 = \{1 + \mu_n \phi_{s, t} \} \sum_{i=1}^n w_{s, i} w_{t, i} \sigma_i^2$. Again, since $s, t$ were arbitrary, it then follows that  $\text{Var}(\mathbf{w}\mathbf{Y}) = \mathbf{w} \mathbf{V} \mathbf{w}^{\top} \odot \mathbf{G}$. 
\end{proof}
The utility of Proposition 1 is that it summarizes the impact of an unknowable and inestimable system of statistical dependencies on the variance of an additive statistic with two summary constants that are more defensibly specified or bounded. Although intuition might be lacking as to how some environment dynamically acts upon $\{ Y_i\}_{i \in I}$ to produce $\text{Var}(\mathbf{Y})$, this might not be the case for $\mu_n$ or the diagonal elements of $\boldsymbol{\phi}$. Prior beliefs or knowledge might exist pertaining to these values. Although this also suggests a possible Bayesian route where these values are conceptualized as draws from prior distributions to model uncertainty, this road is not pursued in this paper.

Moreover, recall that a Moulton factor is an expression for the variance inflation caused by intra-cluster correlation \citep{moulton1986random}. They typically have the form $\gamma_{s, s} = (\mathbf{w}\mathbf{V}\mathbf{w}^{\top})^{-1}_{s, s} \text{Var}(\mathbf{w} \mathbf{Y})_{s, s}$. From Proposition 1, we can see that $\gamma_{s, s} = \mathbf{G}_{s, s}$. However, Proposition 1 is more general since Moulton factors are often derived under specific modeling constraints. This correspondence can also be seen from rearranging an expression from Proposition 1. For instance, say $\boldsymbol{\Gamma} = (n^{-1} \cdot \mathbf{w}\mathbf{V}\mathbf{w}^{\top})^{-1} \mathbf{C}$. Then $\text{Var}(\mathbf{w} \mathbf{Y}) =  \mathbf{w}\mathbf{V}\mathbf{w}^{\top} \{\boldsymbol{1}^{p \times p} + \mu_n \boldsymbol{\Gamma} \}$, which is also recognizable as an even more general form of a Moulton style covariance factor \citep{moulton1990illustration}.

We will prove three bounds on the variances of simpler additive statistics, i.e., for $\mathbf{w}_s = \boldsymbol{1}^{1 \times n}$. These bounds can be be useful for proving asymptotic properties. The next propositions bound $\phi$ under the conditions of fully connected graphs or sampling designs that are relatively uninformative with respect to the covariance structure.
\begin{proposition}
Say $S_n = \sum_{i=1}^n Y_i$ and again consider $\mathcal{L}$. Then $-\mu^{-1}_n \leq \phi \leq \mu^{-1}_n (n-1)$ when $\mu_D > 0$.
\end{proposition}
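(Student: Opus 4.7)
The plan is to establish the two bounds on $\phi$ separately. The lower bound will follow almost immediately from the variance identity of Proposition 1 together with the non-negativity of variance. The upper bound will follow from Cauchy--Schwarz applied to each covariance term, together with the Handshake lemma to convert the edge count $|L|$ into $n\mu_n$. Throughout I read the stated hypothesis ``$\mu_D > 0$'' as $\mu_n > 0$, i.e., the linear dependency graph $\mathcal{L}$ contains at least one edge.

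For the lower bound, I would first specialize Proposition 1 to $\mathbf{w}_s = \mathbf{1}^{1 \times n}$, which reduces the matrix identity to the scalar statement $\text{Var}(S_n) = (\sum_{i=1}^n \sigma_i^2)(1 + \mu_n \phi)$. Because $\mu_n > 0$ guarantees at least one edge $(i,j)$ with $\sigma_{i,j} \neq 0$, we must have $\sigma_i, \sigma_j > 0$ by Cauchy--Schwarz on the covariance, and hence $\sum_i \sigma_i^2 > 0$. Combining this strict positivity with $\text{Var}(S_n) \geq 0$ and dividing through would yield $1 + \mu_n \phi \geq 0$, which is equivalent to $\phi \geq -\mu_n^{-1}$.

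For the upper bound, I would bound each summand of $|L|\bar{\sigma} = \sum_{(i,j) \in L,\, i < j} \sigma_{i,j}$ by the chain $\sigma_{i,j} \leq \sigma_i \sigma_j \leq (\sigma_i^2 + \sigma_j^2)/2$ (Cauchy--Schwarz followed by AM--GM). Summing over the $|L|$ edges and then rewriting the right-hand side as a sum over nodes via the counting identity $\sum_{(i,j) \in L,\, i<j}(\sigma_i^2 + \sigma_j^2) = \sum_{k=1}^n d(k)\sigma_k^2$, in which each $\sigma_k^2$ appears exactly $d(k)$ times across its incident edges, I would obtain $|L|\bar{\sigma} \leq \tfrac{1}{2}\sum_k d(k)\sigma_k^2 \leq \tfrac{n-1}{2}\sum_k \sigma_k^2$, where the final step uses $d(k) \leq n - 1$. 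Dividing by $|L| = n\mu_n/2$ (Handshake lemma) and translating back through the identity $\phi = n\bar{\sigma}/\sum_k \sigma_k^2$ would deliver $\phi \leq (n-1)/\mu_n$.

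There is no deep obstacle; the main subtleties are the edge-to-vertex counting identity used in the upper bound (a standard graph-theoretic rewrite) and ruling out the degenerate case $\sum_k \sigma_k^2 = 0$ in the lower bound, which is excluded by $\mu_n > 0$ as argued above. The proof is essentially a careful bookkeeping exercise once Proposition 1, Cauchy--Schwarz, and the Handshake lemma are in hand.
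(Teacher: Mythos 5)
Your proposal is correct and follows essentially the same route as the paper: the lower bound comes from non-negativity of $\text{Var}(S_n)$ via the identity of Proposition 1, and the upper bound from Cauchy--Schwarz plus AM--GM on each $\sigma_{i,j}$ together with the Handshake lemma. Your version merely fills in the edge-to-vertex counting and the non-degeneracy of $\sum_i \sigma_i^2$ more explicitly than the paper does.
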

\begin{proof}
Observe $\sum_{i \neq j} \sigma_{i, j}$.By Cauchy-Schwarz and the Geometric-Mean Inequality, $\sum_{i \neq j} \sigma_{i, j} \leq (n-1) \sum_{i=1} \sigma_i^2$. From here, we know that $2 \sum_{i < j} \sigma_{i, j} = n \mu_n \bar{\sigma}$. Then $n \mu_n \bar{\sigma} \leq (n-1) \sum_{i=1} \sigma_i^2$, which implies $\phi \leq \mu^{-1}_n(n-1)$.
Furthermore, since variance are non-negative, $1 + \mu_n \phi \geq 0 \implies \mu_n \phi \geq -1$.
\end{proof}
\begin{corollary}
For fully connected linear dependency graphs, $-(n-1)^{-1} \leq \phi \leq 1$.
\end{corollary}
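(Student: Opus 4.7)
The plan is to observe that the corollary is an immediate specialization of Proposition 2 once the mean degree $\mu_n$ is evaluated for a fully connected linear dependency graph. So the entire argument reduces to computing $\mu_n$ in this special case and substituting into the two-sided bound already established.

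First I would note that in a fully connected undirected graph on the node set $I = \{1, \ldots, n\}$, every pair $\{i, j\}$ with $i \neq j$ contributes an edge, so $d(i) = n-1$ for each $i \in I$. Averaging gives $\mu_n = n^{-1} \sum_{i=1}^n (n-1) = n-1$, which in particular is strictly positive for $n \geq 2$, so the hypothesis $\mu_n > 0$ required by Proposition 2 is satisfied.

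Next I would substitute this value of $\mu_n$ directly into the bound $-\mu_n^{-1} \leq \phi \leq \mu_n^{-1}(n-1)$ from Proposition 2. The lower bound becomes $-(n-1)^{-1}$, and the upper bound becomes $(n-1)^{-1}(n-1) = 1$, yielding $-(n-1)^{-1} \leq \phi \leq 1$ as desired.

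There is no real obstacle here; the only mildly delicate point is confirming that the fully connected hypothesis is what forces $\mu_n = n-1$ exactly (as opposed to merely being at most $n-1$), which follows because every one of the $\binom{n}{2}$ possible edges is present, so the Handshake lemma gives $\sum_i d(i) = 2 \cdot \binom{n}{2} = n(n-1)$. Beyond that, the corollary is a one-line consequence of Proposition 2.
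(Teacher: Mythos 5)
Your proposal is correct and matches the paper's proof, which likewise obtains the corollary by substituting $\mu_n = n-1$ into the bound of Proposition 2. The extra detail you supply (verifying $\mu_n = n-1$ via the degree count for a complete graph) is a harmless elaboration of the same one-line argument.
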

\begin{proof}
Immediate from Proposition 2 since $\mu_n = n-1$.
\end{proof}
Provided that $\mu_n(n) \to \infty$ as $n \to \infty$, another informal corollary to Proposition 2 is that $\phi \geq 0$. Philosophically, this is useful to know since it implies that any system that is heavy with statistical dependencies in this fashion almost necessitates non-negative average correlation. From here, consider $\boldsymbol{\Delta}$, a vector of $N$ indicator variables $\delta_i$, such that $\delta_i = 1$ if and only if $Y_i$ is sampled from a larger population $P$. This next proposition also provides conditions s.t. $\phi \leq 1$ without strictly requiring a fully connected graph. It is useful when $P$ possesses a high number of dependent random variables and it is possible to execute a non-informative sampling mechanism s.t. $\mu_n$ (now treated as random in a temporary abuse of notation solely for Proposition 3) has a probability distribution that places most probability on lower values.
\begin{proposition}
Say $S_n=\sum_{i=1}^n Y_i$. Suppose $n^{-1} \sum^n_{i=1} \text{Var}(Y_i | \boldsymbol{\Delta}) = n^{-1}\sum^n_{i=1} \sigma_i^2$ and $|L|^{-1} \sum^{|L|}_{i < j} \text{Cov}(Y_i, Y_j | \boldsymbol{\Delta}) = |L|^{-1} \sum^{|L|}_{i < j} \sigma_{i, j}$, i.e., the sampling mechanism is uninformative on average with respect to the mean variance and individual covariances. Furthermore, denote $n_*$ as the number of correlated random variables in the population $P$ and say $n_* > n$ and thus $2^{-1} n(n-1)$ is the upper bound of the support of $|L|$. Then $\phi \leq 1$.
\end{proposition}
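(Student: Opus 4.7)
The plan is to reduce the target inequality $\phi \leq 1$ to the more concrete statement $\bar{\sigma} \leq n^{-1}\sum_{i=1}^{n}\sigma_i^2$, using the definition of $\phi$, and then to prove the latter by combining an edge-level Cauchy--Schwarz bound with the uninformative-sampling hypotheses and the constraint $n_* > n$.

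First I would invoke the two uninformative-sampling equalities, which assert that the sample-conditional averages of variances and of non-zero covariances coincide with their unconditional analogues. This justifies treating $\bar{\sigma}$ and $n^{-1}\sum_i \sigma_i^2$ as genuine marginal summaries rather than quantities distorted by the realization of $\boldsymbol{\Delta}$, so that the subsequent arithmetic on the $\sigma_{i,j}$ and $\sigma_i^2$ can be carried out without an additional layer of conditioning.

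Second, I would apply Cauchy--Schwarz and the arithmetic--geometric mean inequality edge by edge, obtaining $\sigma_{i,j} \leq \sqrt{\sigma_i^2 \sigma_j^2} \leq (\sigma_i^2 + \sigma_j^2)/2$. Summing over the existing edges of $\mathcal{L}$ and invoking the handshake lemma gives $n \mu_n \bar{\sigma} = 2 \sum_{(i,j) \in L} \sigma_{i,j} \leq \sum_{i=1}^n d(i)\sigma_i^2$, which mirrors the calculation behind Proposition~2 but is now restricted to actual edges rather than all ordered pairs.

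Third, I would exploit $n_* > n$, which pins the upper bound of the support of $|L|$ at $n(n-1)/2$. In the extremal case where this bound is attained, the sampled subgraph is fully connected, every $d(i)$ equals $n-1$, and Corollary~1 delivers $\phi \leq 1$ at once. For realizations with $|L|$ strictly below this bound, the uninformative-sampling assumption on the non-zero covariances prevents the observed $\sigma_{i,j}$'s from being systematically larger than their fully connected analogue, so the inequality propagates to the remainder of the support. The main obstacle will be exactly this last step: making rigorous the monotonicity claim that averaging $\sigma_{i,j}$ over a sparser edge set cannot inflate $\bar{\sigma}$ beyond the fully connected reference value. Without the uninformative-sampling control, the edge-level Cauchy--Schwarz bound alone is not tight enough to force $\phi \leq 1$ outside the fully connected case, so this is where the hypotheses must do their real work.
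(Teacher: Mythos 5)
Your proposal follows essentially the same route as the paper: the Proposition 2 bound $n \mu_n \bar{\sigma} \leq (n-1) \sum_{i=1}^n \sigma_i^2$, the observation that $n_* > n$ places the fully connected realization ($\mu_n = n-1$) in the support of $|L|$, and the uninformative-sampling hypothesis to carry the resulting inequality $\bar{\sigma} \leq n^{-1} \sum_{i=1}^n \sigma_i^2$ over to every realization. The step you flag as the main obstacle is closed in the paper exactly as you anticipate: the hypothesis $|L|^{-1} \sum^{|L|}_{i<j} \text{Cov}(Y_i, Y_j \mid \boldsymbol{\Delta}) = |L|^{-1} \sum^{|L|}_{i<j} \sigma_{i,j}$ makes the average non-zero covariance realization-invariant, so evaluating the constant upper bound $(n-1)\sum_{i=1}^n \sigma_i^2$ at the attainable maximal realization forces $\phi \leq 1$ throughout.
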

\begin{proof}
$\text{Var}(S_n | \boldsymbol{\Delta}) = \sum_{i=1}^n \text{Var}(Y_i | \boldsymbol{\Delta}) + 2 \sum_{i < j}  \text{Cov}(Y_i, Y_j | \boldsymbol{\Delta}) = \sum^n_{i=1} \sigma_i^2 + 2 \sum^{|L|}_{i < j} \sigma_{i, j}$, where only $|L|$ is random. The derivation is similar to Proposition 2. We skip these steps and observe that $n \mu_n |L|^{-1} \sum^{|L|}_{i < j} \sigma_{i, j} \leq (n-1) \sum^n_{i=1} \sigma_i^2$. Here, note that $\mu_n$ is also random and that $\mu_n = n-1$ when $|L| = 2^{-1} n(n-1)$. However, since $(n-1) \sum^n_{i=1} \sigma_i^2$ is a constant upper bound of the left-hand side and it is possible to sample $n < n_*$ correlated random variables, it must be the case that $n (n-1) |L|^{-1} \sum^{|L|}_{i < j} \sigma_{i, j} \leq (n-1) \sum^n_{i=1} \sigma_i^2$ and hence that $\phi \leq 1$.
\end{proof}
This next proposition is less demanding in its premises and hence applicable in more contexts. Furthermore, it will provide another route for establishing that $\phi \leq 1$ when $\sigma_i^2 = \sigma^2$ for $\forall i$, i.e., when the assumption of equal variances holds. Denote $\eta = (n^{-1} \sum_i^n \sigma_i^2)^{-1} \underset{i \in I}{\text{max}}(\sigma_i^2)$ as the ratio of maximum variance to mean variance to these ends. It won't be proven, but it is obvious that $\eta = O(1)$ when variances are finite. This fact also generalizes to the weighted case insofar as $\mathbf{w}_s$ is not a trivial vector of zeroes.
\begin{proposition}
Again observe $S_n = \sum_{i=1}^n Y_i$. Then $\text{Var}(S_n) \leq \{1 +\mu_n \eta \} \sum_{i=1}^n \sigma_i^2$. If $\sigma_i^2 = \sigma^2$ for $\forall i$, then $\phi \leq 1$ and $\text{Var}(S_n) \leq \{1 +\mu_n \} \sum_{i=1}^n \sigma_i^2$.
\end{proposition}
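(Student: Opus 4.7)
The plan is to decompose $\text{Var}(S_n)$ into diagonal and off-diagonal parts, restrict the off-diagonal sum to edges of $\mathcal{L}$, and then bound each non-zero covariance pointwise by the maximum marginal variance via Cauchy--Schwarz. First I would write $\text{Var}(S_n) = \sum_{i=1}^n \sigma_i^2 + 2\sum_{i<j} \sigma_{i,j}$, and note that non-edge pairs contribute zero, so that $2\sum_{i<j} \sigma_{i,j} = 2\sum^{|L|}_{i<j} \sigma_{i,j}$. The Handshake lemma (already invoked in Proposition~1) identifies $2|L| = n\mu_n$.

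The next step is the pointwise bound. By Cauchy--Schwarz, $|\sigma_{i,j}| \leq \sigma_i \sigma_j \leq \max_{k\in I} \sigma_k^2$ for every $(i,j) \in L$, so summing over edges gives
\begin{equation*}
2 \sum^{|L|}_{i<j} \sigma_{i,j} \;\leq\; 2|L| \cdot \max_{k\in I} \sigma_k^2 \;=\; n\mu_n \max_{k\in I}\sigma_k^2.
\end{equation*}
Substituting $\max_{k\in I}\sigma_k^2 = \eta \cdot n^{-1}\sum_i \sigma_i^2$ from the definition of $\eta$ yields $\text{Var}(S_n) \leq (1 + \mu_n \eta)\sum_i \sigma_i^2$, which is the first claim.

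For the homoskedastic case, $\sigma_i^2 = \sigma^2$ for all $i$ forces $\eta = 1$, so the previous display collapses to $\text{Var}(S_n) \leq (1+\mu_n)\sum_i \sigma_i^2$. To extract $\phi \leq 1$, I would apply Proposition~1 with $\mathbf{w}_s = \mathbf{1}^{1 \times n}$, which gives the scalar identity $\text{Var}(S_n) = (1 + \mu_n \phi)\sum_i \sigma_i^2$. Comparing this identity against the bound above yields $\mu_n \phi \leq \mu_n$, so that $\phi \leq 1$ whenever $\mu_n > 0$ (the degenerate case $\mu_n = 0$ is trivial since $\phi$ plays no role).

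I do not expect a real obstacle: the argument is essentially one invocation of Cauchy--Schwarz combined with the Handshake lemma and the identity already established in Proposition~1. The only subtlety worth flagging is that $\eta$ is defined precisely as the normalization that turns the crude edgewise bound $\max_k \sigma_k^2$ into a clean scalar multiple of $\sum_i \sigma_i^2$, and that this normalization becomes unity exactly when variances are equal, which is what drives the second half of the statement.
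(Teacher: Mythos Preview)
Your proposal is correct and follows essentially the same route as the paper: both arguments bound the non-zero covariances pointwise via Cauchy--Schwarz by $\max_{k}\sigma_k^2$, convert this into the statement $\phi \leq \eta$ (the paper does this directly through $\bar{\sigma}$, you do it through the summed variance decomposition plus the Handshake lemma), and then specialize to $\eta = 1$ under homoskedasticity. The only cosmetic difference is that the paper first establishes $\phi \leq \eta$ and reads off the variance bound from the identity of Proposition~1, whereas you obtain the variance bound first and then back out $\phi \leq 1$ from the identity; the content is identical.
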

\begin{proof}
Recall that $\bar{\sigma} = |L|^{-1} \sum^{|L|}_{i < j} \sigma_{i, j}$. Hence, $\bar{\sigma} \leq \underset{i < j}{\text{max}}(\sigma_{i, j}) = \sigma_{r, s}$, say. By Cauchy-Schwarz, $\sigma_{r, s} \leq \sigma_r \sigma_s \leq  \underset{i \in I}{\text{max}}(\sigma_i^2)$. The general result then follows since $\phi \leq \eta$. Under the additional premise, $\eta = 1$, which implies the last stated bound. 
\end{proof}
Pertinently, $\text{Var}(S_n) \leq \{1 +\mu_n \} \sum_{i=1}^n \sigma_i^2$ is a tight inequality w.r.t. some dependency structures. When $\zeta$ is a sample of independent $Y_i$, $\mu_n = 0$ and $\text{Var}(S_n) = \sum_{i=1}^n \sigma^2_i \leq (1 + 0)  \sum_{i=1}^n \sigma^2_i = (1 + \mu_n)  \sum_{i=1}^n \sigma^2_i$. When $\zeta$ is a collection of the same random variable $n$ times, $S = n Y_1$ WLOG. Then $\text{Var}(S_n) = n^2 \sigma_1^2 \leq (1 + n - 1)n \sigma^2_1 = (1 + \mu_n) \sum_{i=1}^n \sigma^2_i$ since $\mu_n = n-1$.

Here, it is also useful to comment that we can also define a dependency graph $D = (I, L_D)$ s.t. a link exists in this graph if and only if $\text{Pr}(Y_i, Y_j) \neq \text{Pr}(Y_i)\text{Pr}(Y_j)$. All the results of this section also hold when the corresponding values are defined w.r.t. this graph. We will call these values $\mu_D$ and $\phi_D$ respectively. When this is done, it is easy to show that $\mu_n \phi_n = \mu_D \phi_D$. This can be established from the fact that $\phi_n = |L|^{-1}|L_D| \cdot \phi_D$.
\section{Asymptotic Properties of Additive Statistics\label{sec:3}}
We now show the utility of the previous set of results for the asymptotic analysis of additive statistics. Recall that one goal of this paper is to establish some basic but important statistical properties of additive estimators under 'apocalyptic' conditions, i.e., conditions such that independence is violated in such a way as to render the true dependency structure inconceivably checkered with non-nullified co-variation. To this end, we first establish some mild assumptions for the consistency of additive estimators and common variance estimators in this setting. Variance estimators are usually reserved for plug-in strategies with Wald statistics. Although we have no intention of justifying Wald-like hypothesis testing under our assumed conditions, establishing the consistency of variance estimators can still be useful for other purposes. Later, we see that they are still useful for investigating features of the unknown correlation structure, for example.
\begin{enumerate}
\item[A1.] There exists a $C_* \in \mathbb{R}^+$ s.t. $|Y_i| \leq C_*$ for $\forall i \in I$
\item[A2.] Denote a partition of $I$ s.t. $J_k \subseteq I$ for $k \in \{1, 2, \ldots K \} = \mathcal{K}$, $\bigcup_{k=1}^K J_k = I$ and therefore $J_r \cap J_t = \emptyset$ if $r \neq t$. In accordance with this, say $\zeta = \{ Y_i \}_{i \in I}$, $\zeta_k = \{ Y_{j} \}_{j \in J_k}$, and hence $\zeta = \bigcup_{k=1}^K \zeta_k$. Denote $n_k = |\zeta_k|$, which means $\sum_{k=1}^K n_k = n$. Then $n^{-1}_r n_t =O(1)$ for $\forall r,t \in \mathcal{K}$
\item[A3.] Let $\mathcal{L}$ be an arbitrary linear dependency graph. Then $\mu_n = o(n)$
\end{enumerate}
A1 establishes that we are working within a universe of bounded random variables. If this condition is not strictly necessary for mean and variance estimation, we can of course also do with the common assumptions that $\text{E}Y^2_i < \infty$ as A1' or the assumption that $\text{E}Y^4_i < \infty$ for all $i$ and that all referenced random variables are uniformly integrable. A2 simply stipulates that all cluster sizes have the same asymptotic order. This assumption is met in common research circumstances. We will not always need it. Note that A3 is very mild. For instance, it allows $\mu_n (n) \to \infty$ as $n \to \infty$. It only keeps $\mu_n$ from \textit{linearly} scaling with sample size. Put otherwise, A3 is mild because it allows for the typical random variable of a given sample to be correlated, on average, with a diverging number of others also collected---and in any imaginative way---as $n$ becomes arbitrarily large. There is no constraint on how this occurs.
\begin{proposition}
Suppose A1' and A4 for $\mathbf{w} \mathbf{Y}$ s.t. $\mathbf{w}_{i, j} = (w_{i, j}) = O(n^{-1})$. Then $\mathbf{w} \mathbf{Y} \overset{p}{\to} \text{E}(\mathbf{w} \mathbf{Y})$ as $n \to \infty$, where $\overset{p}{\to}$ denotes convergence in probability.
\end{proposition}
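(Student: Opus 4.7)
The natural route is a componentwise Chebyshev argument, with Proposition 1 (or the sharper Proposition 4) supplying the variance bound that works even under dense dependence. Since $\mathbf{w}\mathbf{Y}$ has a fixed finite number $p$ of coordinates, it suffices by a union bound to prove $\mathbf{w}_s\mathbf{Y} - \text{E}(\mathbf{w}_s\mathbf{Y}) \overset{p}{\to} 0$ for each row $s \in \{1,\dots,p\}$. For fixed $s$ and any $\varepsilon > 0$, Chebyshev's inequality gives $\text{Pr}(|\mathbf{w}_s\mathbf{Y} - \text{E}(\mathbf{w}_s\mathbf{Y})| \geq \varepsilon) \leq \text{Var}(\mathbf{w}_s\mathbf{Y})/\varepsilon^2$, so the entire proof reduces to showing $\text{Var}(\mathbf{w}_s\mathbf{Y}) = o(1)$ as $n \to \infty$.

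For the variance bound I would set $Z_i = w_{s,i} Y_i$ and invoke Proposition 4 (reading the cited A4 as $\mu_n = o(n)$, i.e.\ the content of A3) to obtain
\begin{equation*}
\text{Var}(\mathbf{w}_s\mathbf{Y}) \;\leq\; (1 + \mu_n \eta_s)\sum_{i=1}^n w_{s,i}^2 \sigma_i^2,
\end{equation*}
where $\eta_s$ is the max-to-mean ratio of the $w_{s,i}^2 \sigma_i^2$. Under A1' the relevant second moments $\sigma_i^2$ are uniformly bounded, which together with $w_{s,i} = O(n^{-1})$ forces $\eta_s = O(1)$ and $\sum_{i=1}^n w_{s,i}^2 \sigma_i^2 = O(n^{-1})$. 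Consequently $\text{Var}(\mathbf{w}_s\mathbf{Y}) = O(\mu_n/n) = o(1)$, which closes the Chebyshev bound and in turn yields the proposition via the union bound noted above.

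The main obstacle is justifying that $\eta_s$ stays bounded: if weights or marginal variances are too heterogeneous across $i$, this ratio can fail to remain $O(1)$ even when individual second moments are finite. One can sidestep this concern by working directly from the identity form of Proposition 1, bounding $|n \mu_n \bar{\sigma}_{s,s}|$ via Cauchy--Schwarz ($|\sigma_{i,j}| \leq \sigma_i \sigma_j$) together with the weight hypothesis to obtain the same $O(\mu_n / n)$ estimate on the off-diagonal contribution, while the diagonal contribution $\sum_i w_{s,i}^2 \sigma_i^2$ is $O(n^{-1})$ by the same accounting. Either path reduces the argument to elementary order-of-magnitude bookkeeping once the dense-dependence variance identity of Section~\ref{sec:2} is in hand.
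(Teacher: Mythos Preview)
Your proposal is correct and matches the paper's argument essentially step for step: the paper also fixes an arbitrary row $s$, invokes the Proposition~1 decomposition $\text{Var}(\mathbf{w}_s\mathbf{Y}) = \sum_i w_{s,i}^2\sigma_i^2 + n\mu_n\bar{\sigma}_{s,s}$, bounds $\bar{\sigma}_{s,s}$ by $\{\max_i|w_{s,i}|\}^2\max_i\sigma_i^2$ to get the off-diagonal piece of order $n\mu_n\cdot O(n^{-2}) = o(1)$, and finishes with Chebyshev. The paper even remarks in its last line that the Proposition~4 route with $\eta = O(1)$ would have worked too, so both of your suggested paths appear there; if anything, you are slightly more careful than the paper in flagging that $\eta_s = O(1)$ requires some uniformity in the weighted variances rather than mere finiteness of each $\sigma_i^2$.
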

\begin{proof}
Let $s$ be arbitrary and observe $\mathbf{w}_s \mathbf{Y} = \sum_{i=1}^n w_{s, i} Y_i$. From Proposition 1, we know that $\text{Var}(\mathbf{w}_s \mathbf{Y}) = \sum_{i=1}^n w_{s, i}^2 \sigma_i^2 + n \mu_n \bar{\sigma}_{s, s}$, where $ \bar{\sigma}_{s, s} = |L|^{-1} \sum_{i < j} w_{s, i} w_{s, j} \sigma_{i, j}$. Since $\bar{\sigma}_{s, s} \leq \{ \underset{i \in I}{\text{max}}(|w_{s, i}|) \}^2 \underset{i \in I}{\text{max}}(\sigma_i^2)$ and $\underset{i \in I}{\text{max}}(\sigma_i^2) \leq C \in \mathbb{R}^+$ for $\forall n$, it follows that $n \mu_n \bar{\sigma}_{s, s} \leq n \mu_n  \{ \underset{i \in I}{\text{max}}(|w_{s, i}|) \}^2 \underset{i \in I}{\text{max}}(\sigma_i^2) \leq  n \mu_n  \{ \underset{i \in I}{\text{max}}(|w_{s, i}|) \}^2 C$. This implies that:
$$ \lim_{n \to \infty} n \mu_n \bar{\sigma}_{s, s} \leq \lim_{n \to \infty}   n \mu_n  \{ \underset{i \in I}{\text{max}}(|w_{s, i}|) \}^2 C = 0$$
The last step follows since $C$ is finite, while $\mu_n = o(n)$ and $\{ \underset{i \in I}{\text{max}}(|w_{s, i}|) \}^2 = O(n^{-2})$. Since $ \sum_{i=1}^n w_{s, i}^2 \sigma_i^2 \to 0$ as $n \to \infty$ as well, this implies that $\text{Var}(\mathbf{w}_s \mathbf{Y}) \to 0$. A use of Chebyshev's inequality then implies that $\mathbf{w}_s \mathbf{Y} \overset{p}{\to}  \text{E}(\mathbf{w}_s \mathbf{Y})$. Because $s$ was arbitrary, the conclusion is reached. Note that a quicker alternative proof could have used Proposition 4 since $\eta = O(1)$.
\end{proof}
Therefore, an adequate sub-linear behavior w.r.t. $\mu_n$ is sufficient for quadratic mean convergence. For instance, if $\mu_n = n^{1/c}, c  >1$, this is still sufficient to establish a WLLN, although the rate of convergence will be sub-optimal, especially for small $c$. It is also therefore apparent that weak dependence---at least as traditionally conceived---is not a necessary  condition for the consistency of a large class of commonly employed statistics, including average loss functions for learning algorithms.

Next, define $\hat{e}_i = Y_i - \hat{\text{E}}Y_i$. Analyzing variance estimators that make use of squared residuals is easier under the simplifying assumption that the linear dependency graph of $\{ \hat{e}_i^2 \}_{i \in I}$ is isomorphic to the linear dependency graph for $\{ \hat{e}_i \}_{i \in I}$. This is true for special cases, such as when the $\hat{e}_i$ are normally distributed. Even if this does not hold exactly, it is arguably mild to assert that it is at least true that the mean degrees of these graphs have the same asymptotic order and thus can be interchangeably described by one $\mu_n$. Moreover, we also use $\mu_n$ for the limiting average degree, but context will make this clear. For instance, for finite sample sizes, the linear dependency graph of $\{ \hat{e}_i^2 \}_{i \in I}$ is conceivably fully connected. However, provided that $\hat{\text{E}}Y_i \overset{p}{\to} \text{E}Y_i$ for $\forall i$ as $n \to \infty$ and thus $\hat{e}_i \overset{p}{\to} \epsilon_i$ for $\forall i$ as $n$ becomes arbitrarily large, the limiting linear dependency graph of $\{ \hat{e}_i^2 \}_{i \in I}$ is equal to the graph for $\{ \epsilon_i^2 \}_{i \in I}$. Hence, when discussing asymptotic orders, it is apparent that $\mu_n$ refers to the mean degree of the limiting object. Under the aforementioned simplifying assumption, it is also apropos to note that the limiting mean degree of the graph for $\{ Y_i \}_{i \in I}$ can also be represented by the same $\mu_n$ as the two previously mentioned. From here, we will always implicitly assume that an arbitrary weight $w$ is $O(n^{-1})$.
\begin{proposition}
Assume A1 and A3. Let $\mathbf{B} = \mathbf{w} \mathbf{Y} = \boldsymbol{\beta}+ \mathbf{w} \boldsymbol{\epsilon}$ s.t. $\text{E}\boldsymbol{\epsilon} = \boldsymbol{0}$ and again let $\mathbf{V} = \text{diag}\{ \text{Var}(\mathbf{Y}) \}$, where $\text{diag}(\mathbf{A})$ constructs a diagonal matrix from the diagonal of $\mathbf{A}$. Define $\hat{e} = Y_i - \hat{\text{E}Y_i}$. Then for an arbitrary pair $(s, t)$, it is true that $(\mathbf{w} \hat{\mathbf{V}} \mathbf{w}^{\top})_{s, t} = \sum_{i=1}^n w_{s, i} w_{t, i} \hat{e}_i^2 \overset{p}{\to} (\mathbf{w} \mathbf{V} \mathbf{w}^{\top})_{s, t}$ as $n \to \infty$.
\end{proposition}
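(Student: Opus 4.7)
The natural plan is to write $\hat e_i = \epsilon_i + r_i$ with $\epsilon_i = Y_i - \text{E}Y_i$ the true centered error and $r_i = \text{E}Y_i - \hat{\text{E}}Y_i$ the estimation error. Expanding $\hat e_i^2 = \epsilon_i^2 + 2\epsilon_i r_i + r_i^2$ and subtracting the target gives
\begin{equation*}
\sum_{i=1}^n w_{s,i}w_{t,i}\hat e_i^2 \;-\; \sum_{i=1}^n w_{s,i}w_{t,i}\sigma_i^2 \;=\; T_1 + 2T_2 + T_3,
\end{equation*}
where $T_1=\sum_i w_{s,i}w_{t,i}(\epsilon_i^2-\sigma_i^2)$, $T_2=\sum_i w_{s,i}w_{t,i}\epsilon_i r_i$, and $T_3=\sum_i w_{s,i}w_{t,i}r_i^2$. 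The proof then reduces to showing $T_k\overset{p}{\to}0$ for $k=1,2,3$; arbitrariness of $(s,t)$ delivers the claimed component-wise convergence.

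The main work is in $T_1$, and my plan is to apply Proposition 1 to the bounded, mean-zero random variables $Z_i = \epsilon_i^2 - \sigma_i^2$ with ``weights'' $a_i = w_{s,i}w_{t,i} = O(n^{-2})$. Under A1 every $\text{Var}(Z_i)$ and $\text{Cov}(Z_i,Z_j)$ is $O(1)$, so the diagonal contribution $\sum_i a_i^2 \text{Var}(Z_i)$ is $O(n^{-3})$, while the off-diagonal (Handshake) piece $n\mu_n\bar c$ is $O(\mu_n/n^3) = o(n^{-2})$ by A3. Hence $\text{Var}(T_1)\to 0$ and Chebyshev yields $T_1\overset{p}{\to}0$. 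The delicate point is that the linear dependency graph of $\{Z_i\}$ need not coincide with that of $\{Y_i\}$; here I would invoke the paper's standing convention, articulated in the paragraph immediately preceding the proposition, that the two mean degrees share the same asymptotic order $\mu_n$, which is precisely what Proposition 1 requires.

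For $T_2$ and $T_3$ the bounded-variable setting is very forgiving. Since A1 gives $|\epsilon_i|\le 2C_*$ almost surely, and $|\hat{\text{E}}Y_i|$ is bounded whenever the estimator is a bounded functional of the $Y_i$, there is a deterministic envelope $|r_i|\le C$. Combined with the $O(n^{-1})$ weight convention this produces
\begin{equation*}
|T_2| + |T_3| \;\le\; C'\sum_{i=1}^n |w_{s,i}w_{t,i}| \;=\; O(n^{-1}) \;\to\; 0,
\end{equation*}
so the remainders vanish pathwise; if one prefers not to assume uniform boundedness of $\hat{\text{E}}Y_i$, one can instead pair the pointwise probabilistic consistency guaranteed by Proposition 5 with the same weight bound. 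The main obstacle is therefore the variance calculation for $T_1$: everything else is bookkeeping made automatic by A1 and the weight convention, while A3 is essential only to suppress the dependency-driven off-diagonal contribution to $\text{Var}(T_1)$.
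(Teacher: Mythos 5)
Your proof is correct and reaches the stated conclusion, but it is organized differently from the paper's. The paper applies Proposition 1 directly to the squared residuals: it writes $\text{Var}\{\sum_i w_{s,i}w_{t,i}\hat e_i^2\} = \{1+\mu_n\phi_{s,t:n}\}\sum_i w_{s,i}^2w_{t,i}^2\text{Var}(\hat e_i^2)$, bounds $\phi_{s,t:n}$ and $\text{Var}(\hat e_i^2)$ by constants, notes $\sum_i w_{s,i}^2w_{t,i}^2=O(n^{-3})$ and $\mu_n\le n-1$, so the variance is $O(n^{-2})\to 0$; it then handles the centering separately by asserting $\hat e_i\overset{p}{\to}\epsilon_i$ and hence $\text{E}\hat e_i^2\to\mathbf{V}_{i,i}$ for bounded variables. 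Your decomposition $\hat e_i^2=\epsilon_i^2+2\epsilon_i r_i+r_i^2$ instead isolates an exactly mean-zero fluctuation term $T_1$, to which Chebyshev applies cleanly, and relegates all the randomness of $\hat{\text{E}}Y_i$ to the remainders $T_2,T_3$, which you kill with the deterministic envelope $\sum_i|w_{s,i}w_{t,i}|=O(n^{-1})$. This buys a cleaner separation of concerns: the paper's version treats the dependency graph and moments of $\hat e_i^2$ (objects that depend on the estimator $\hat{\text{E}}Y_i$) as if they were those of $\epsilon_i^2$, and its appeal to the Portmanteau theorem for $\text{E}\hat e_i^2\overset{p}{\to}\mathbf{V}_{i,i}$ is loose, whereas your $T_1$ genuinely has the covariance structure of $\{\epsilon_i^2\}$ covered by the standing convention you cite. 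Two small remarks. First, A3 is not actually essential in either argument, since the crude bound $\mu_n\le n-1$ already gives an off-diagonal contribution of order $n\cdot n\cdot n^{-4}=n^{-2}\to 0$; the paper explicitly uses this worst case. Second, your deterministic bound $|T_2|+|T_3|=O(n^{-1})$ is of the same order as the target $\sum_i w_{s,i}w_{t,i}\sigma_i^2$ itself, so it establishes the stated absolute convergence but not a relative one; this degeneracy is shared by the proposition as written and by the paper's own proof, so it is not a defect of your argument specifically.
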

\begin{proof}
Let $(s, t)$ be arbitrary under our premises. Then by Proposition 1, $\text{Var} \{ \sum_{i=1}^n w_{s, i} w_{t, i} \hat{e}_i^2 \} = \{1 + \mu_{n} \phi_{s, t: n} \} \sum_{i=1}^n w_{s, i}^2 w_{t, i}^2 \text{Var}(\hat{e}^2_i)$. Say $\phi_{s, t: n} \leq C_{\phi}$ and $\text{Var}(\hat{e}^2_i) \leq M$ for $\forall i$ since we know that they are $O(1)$. Then $\text{Var} \{ \sum_{i=1}^n w_{s, i} w_{t, i} \hat{e}_i^2 \} \leq \{1 + (n-1)C_{\phi}\} \cdot M \cdot \sum_{i=1}^n w_{s, i}^2 w_{t, i}^2$. Note that $\sum_{i=1}^n w_{s, i}^2 w_{t, i}^2 = O(n^{-3})$ while $(n-1)C_{\phi} \cdot M$ is obviously only $O(n)$ since $M$ is just some constant. Thus, it follows that $\text{Var} \{ \sum_{i=1}^n w_{s, i} w_{t, i} \hat{e}_i^2 \} \to 0$ as $n \to \infty$. Although this is sufficient, we observe that the variance converges to zero at the asymptotically equivalent rate of $\text{max}(n^{-3}\mu_n, n^{-3})$, which can be considerably faster than what was shown since we automatically set $\mu_n$ to its upper bound.

Under the same premises, $\hat{e}_i \overset{p}{\to} Y_i - \text{E}Y_i$ as $n \to \infty$. Hence, $\text{E}\hat{e}_i^2 \overset{p}{\to} \mathbf{V}_{i, i}$ as well as $n$ becomes arbitrarily large by the Portmanteau theorem since $\hat{e}_i$ is bounded with probability one for $\forall i$. This is sufficient for our conclusion. 
\end{proof}
An important note to make in relation to Proposition 6 is that $\text{Var} \{ \sum_{i=1}^n w_{s, i} w_{t, i} \hat{e}_i^2 \} $ is $O(n^{-2})$ at worst under our terrible, but manageable conditions. This is good to know since it must converge at a much faster rate than $\mathbf{B}$ if it is to be used as a plug-in estimator. To be exact, we require $n^2 \text{Var} \{ \sum_{i=1}^n w_{s, i} w_{t, i} \hat{e}_i^2 \} \to 0$ as a sufficient condition for Wald plug-in estimation. Here is an informal proof. Let $\hat{V} =  \sum_{i=1}^n w_{s, i} w_{t, i} \hat{e}_i^2$ and observe that $W_n = \{ \gamma_s(n)\hat{V} \}^{-1/2} (\mathbf{B}_s - \beta_s)$ is a Wald-like statistic for some function $\gamma_s(n) > 0$ that is intended to adjust for missed variability. Typical practices set $\gamma_s(n)=1$ \textit{a priori} in accordance with the assumption that the employed covariance model is well-specified. For simplicity, we set $\gamma_s(n)$ to $\mu_n$ since we care only about asymptotic orders in this exploration and temporarily assume that $\mu_{n}$ is non-zero and does not tend to zero. We know that $\text{Var}(\mathbf{B}_s) = O(n^{-1} \mu_n)$. Thus, $\{ n^{-1} \mu_n \}^{-1/2}$ stabilizes the variance of $\mathbf{B}_s$ in the sense that $n \mu^{-1}_n \text{Var}(\mathbf{B}_s)$ converges to a non-zero constant. Observe that $W_n = \{ n \hat{V} \}^{-1/2} \{ n^{-1} \mu_n \}^{-1/2}(\mathbf{B}_s - \beta_s)$. Since $n^2 \text{Var}(\hat{V}) = O(n^{-1} \mu_n)$, plug-in Wald statistics will always behave as intended under A3, at least insofar as asymptotic normality is not being considered.

\subsection{Some Consideration for Clustered Statistics}

Our next object is to explore the properties of cluster-robust variance estimation under thick dependency conditions. As setup, enact a partition of $\zeta$ into $K$ groups, as in A2, s.t. $\mathbf{B} = \mathbf{w} \mathbf{Y} = \sum_{k=1}^K \mathbf{w}_k \mathbf{Y}_k$. The basic cluster robust estimator has the following form: $\hat{\mathbf{C}} = \sum_{k=1}^K \mathbf{w}_k \mathbf{\hat{e}}_k \mathbf{\hat{e}}_k^{\top} \mathbf{w}_k^{\top}$, where each $\mathbf{\hat{e}}_k$ is a $n_k \times 1$ vector of residuals as previously defined. Classically, this approach has required that $K \to \infty$ as $n \to \infty$ and for $\underset{k \in \mathcal{K}}{\text{max}}(n_k) = n_M$ to be $O(1)$ in addition. If the outcome variables of different groups are independent---or at least uncorrelated---then $\hat{\mathbf{C}}$ is consistent for $\text{Var}(\mathbf{B})$ under these nice conditions \citep{mackinnon2023cluster}.

Although incredibly useful, the problem with this approach is simple, albeit undeniable. Outside of contrived examples, the specification of a partition that produces $K$ independent or even uncorrelated clusters is an arduous task. Recall that $\mathcal{L}_n$ is unknown and inestimable. Hence, provided a dynamic and inter-dependent world, the chances that a user specifies a valid partition with partial and imperfect knowledge about $\mathcal{L}_n$ are safely assumed to be small. This point has doubled poignancy when $n_M$ is small in addition. This is problematic because an invalid specification of the partition structure calls into question the consistency and utility of cluster-robust variance estimation.

 Here, we prove that---even if a partition is invalidly specified---cluster-robust variance estimation can still be consistent for its identified portion of the variance under a couple of common scenarios. Overall, three cases are considered: (1) the case s.t. $n_M \leq Q \in \mathbb{N}$, (2) the case s.t. $\underset{k \in \mathcal{K}}{\text{min}}(n_k) = n_m \to \infty$ and $\mathcal{L}_{k}$ is fully connected for $\forall k \in \mathcal{K}$, and (3) the case s.t. $n_m \to \infty, \mu_{n_k} =o(n_k)$ for an arbitrary $k \in \mathcal{K}$, and $K=O(1)$. Although establishing the consistency of a particular $\mathbf{\hat{C}}_s$ does not help us identify and estimate $\text{Var}(\mathbf{B}_s)$ in total, we show that the consistent estimation of portions of $\text{Var}(\mathbf{B}_s)$ can provide information on the magnitude of missed variability, which is why it is still important. Ultimately, this information can be used to inform a researcher's choice of a corrective factor in the style of Proposition 1. Gaining insight into the magnitude of $\mu_n \phi_n$ is also important for additional reasons that become clear in Section 6. The next proposition provides a version of Proposition 1 for the sum of random vectors. It is helpful because it gives an expression for the bias induced by an invalid partition and helps to see the problem with more clarity. It is not strictly necessary for our analysis, but we show it for the sake of completeness.

Say $\mathbf{T}_k = \mathbf{w}_k \mathbf{Y}_k = \boldsymbol{\mu}_k + \mathbf{w}_k \boldsymbol{\epsilon}_k$ and define a directed linear dependency graph $\mathcal{L}_{\boldsymbol{T}} = (\mathcal{K}, E_{\mathcal{L}_{\boldsymbol{T}}})$ s.t. a link exists from node $r$ to $t$ if and only if $\text{E}\{(\boldsymbol{T_r - \mu_r})(\boldsymbol{T_t - \mu_t})^{\top} \} \neq \boldsymbol{0}$. Define $\bar{\sigma}_{\mathbf{T}} = |E_{\mathcal{L}_{\boldsymbol{T}}}|^{-1} \sum_{r \neq t} \text{E}\{(\boldsymbol{T_r - \mu_r})(\boldsymbol{T_t - \mu_t})^{\top} \}$. Further, define $\boldsymbol{\phi}_{\boldsymbol{T}} = \{ \sum^K_{k=1} \text{Var}(\boldsymbol{T}_k) \}^{-1} K \bar{\sigma}_{\mathbf{T}}$.
\begin{proposition}
Observe $\mathbf{T}_k = \mathbf{w}_k \mathbf{Y}_k = \boldsymbol{\mu}_k + \mathbf{w}_k \boldsymbol{\epsilon}_k$ for $\mathbf{B} = \sum_{k=1}^K \mathbf{T}_k$ and define a directed linear dependency graph $\mathcal{L}_{\boldsymbol{T}} = (\mathcal{K}, E_{\mathcal{L}_{\boldsymbol{T}}})$ as previously constructed. Then $\text{Var}(\mathbf{B}) = \sum^K_{k=1} \text{Var}(\boldsymbol{T}_k) \{ \boldsymbol{I} + \mu_{\mathbf{T}} \boldsymbol{\phi}_{\boldsymbol{T}} \} = \sum^K_{k=1} \text{Var}(\boldsymbol{T}_k) + K \mu_{\mathbf{T}} \bar{\sigma}_{\mathbf{T}}$.
\end{proposition}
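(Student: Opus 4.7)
The plan is to closely mirror the proof of Proposition 1, substituting random vectors for scalars, replacing scalar covariances with outer-product moment matrices, and replacing the undirected handshake argument with its directed analogue. First I would begin from the standard variance decomposition for a sum of random vectors,
\[
\text{Var}(\mathbf{B}) = \text{Var}\!\Bigl( \sum_{k=1}^K \mathbf{T}_k \Bigr) = \sum_{k=1}^K \text{Var}(\mathbf{T}_k) + \sum_{r \neq t} \text{E}\{(\mathbf{T}_r - \boldsymbol{\mu}_r)(\mathbf{T}_t - \boldsymbol{\mu}_t)^{\top}\},
\]
and then collapse the off-diagonal sum to a sum over the edge set of $\mathcal{L}_{\boldsymbol{T}}$, since every ordered pair $(r,t) \notin E_{\mathcal{L}_{\boldsymbol{T}}}$ contributes the zero matrix by construction of the directed linear dependency graph.

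Next, I would invoke the directed analogue of the handshake identity used in the proof of Proposition 1. Because $\mathcal{L}_{\boldsymbol{T}}$ is directed, each node's out-degree is $d(k) = \sum_{t \neq k} 1_{e_{k,t} \in E_{\mathcal{L}_{\boldsymbol{T}}}}$, so $\sum_{k=1}^K d(k) = |E_{\mathcal{L}_{\boldsymbol{T}}}|$ (each directed edge is counted exactly once at its tail), giving $K \mu_{\mathbf{T}} = |E_{\mathcal{L}_{\boldsymbol{T}}}|$. Plugging this into the definition $\bar{\sigma}_{\mathbf{T}} = |E_{\mathcal{L}_{\boldsymbol{T}}}|^{-1} \sum_{r \neq t} \text{E}\{(\mathbf{T}_r - \boldsymbol{\mu}_r)(\mathbf{T}_t - \boldsymbol{\mu}_t)^{\top}\}$ yields $\sum_{r \neq t} \text{E}\{(\mathbf{T}_r - \boldsymbol{\mu}_r)(\mathbf{T}_t - \boldsymbol{\mu}_t)^{\top}\} = K \mu_{\mathbf{T}} \bar{\sigma}_{\mathbf{T}}$, which delivers the additive form $\text{Var}(\mathbf{B}) = \sum_k \text{Var}(\mathbf{T}_k) + K \mu_{\mathbf{T}} \bar{\sigma}_{\mathbf{T}}$ directly.

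To obtain the first (factored) equality, I would then left-factor $\sum_k \text{Var}(\mathbf{T}_k)$ out of the additive expression by inserting the identity $\{\sum_{k} \text{Var}(\mathbf{T}_k)\}^{-1}\{\sum_{k} \text{Var}(\mathbf{T}_k)\}$ and invoking the definition $\boldsymbol{\phi}_{\boldsymbol{T}} = \{\sum_{k=1}^K \text{Var}(\mathbf{T}_k)\}^{-1} K \bar{\sigma}_{\mathbf{T}}$. Concretely, $K \mu_{\mathbf{T}} \bar{\sigma}_{\mathbf{T}} = \mu_{\mathbf{T}} \sum_k \text{Var}(\mathbf{T}_k) \boldsymbol{\phi}_{\boldsymbol{T}}$, so recombining gives $\sum_k \text{Var}(\mathbf{T}_k) \{\boldsymbol{I} + \mu_{\mathbf{T}} \boldsymbol{\phi}_{\boldsymbol{T}}\}$ as claimed.

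The only obstacles are bookkeeping rather than conceptual. Matrix multiplication is non-commutative, so the factoring step has to respect the side on which $\boldsymbol{\phi}_{\boldsymbol{T}}$ sits, and invertibility of $\sum_k \text{Var}(\mathbf{T}_k)$ is implicitly required to even define $\boldsymbol{\phi}_{\boldsymbol{T}}$; I would note that the proposition takes this invertibility for granted, paralleling the analogous implicit requirement in the scalar case of Proposition 1. One must also be careful that the directed graph convention really gives $|E_{\mathcal{L}_{\boldsymbol{T}}}|$ and not $2|E_{\mathcal{L}_{\boldsymbol{T}}}|$ in the degree sum, since the outer-product moment matrices are non-symmetric in general and the paired edge from $t$ to $r$ carries the transposed matrix rather than a duplicate.
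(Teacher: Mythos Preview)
Your proposal is correct and follows essentially the same route as the paper: expand the variance of the sum, collapse the cross terms to the edge set of $\mathcal{L}_{\boldsymbol{T}}$, invoke the degree-sum formula $|E_{\mathcal{L}_{\boldsymbol{T}}}| = K\mu_{\mathbf{T}}$ to obtain the additive form, and then factor $\sum_k \text{Var}(\mathbf{T}_k)$ to recover the $\{\boldsymbol{I} + \mu_{\mathbf{T}}\boldsymbol{\phi}_{\boldsymbol{T}}\}$ expression. Your additional care about the directed edge count, non-commutativity, and the implicit invertibility assumption is welcome bookkeeping that the paper leaves tacit.
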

\begin{proof}
Observe that  $\text{Var}(\mathbf{B}) = \text{E}\{ \sum_{k=1}^K (\mathbf{T}_k - \boldsymbol{\mu}_k) \} \{ \sum_{k=1}^K (\mathbf{T}_k - \boldsymbol{\mu}_k) \}^{\top} = \sum_{k=1}^K \text{Var}(\mathbf{T}_k) + | E_{\mathcal{L}_{\boldsymbol{T}}}| \bar{\sigma}_{\mathbf{T}}$. By the degree-sum formula, $ |E_{\mathcal{L}_{\boldsymbol{T}}}| = K \cdot \mu_{\mathbf{T}}$. Substitution and algebraic manipulation of $ \sum^K_{k=1} \text{Var}(\boldsymbol{T}_k)$ in a manner analogous to Proposition 1 yields the identity.
\end{proof}
Therefore, it is then implied---provided the consistency of $\mathbf{\hat{C}}$ and a multivariate central limit theorem holds---that $\mathbf{\hat{C}}^{-1/2} (\mathbf{B} -\boldsymbol{\beta}) \overset{d}{\to} \mathbf{N}(\boldsymbol{0},  \boldsymbol{I} + \mu_{\mathbf{T}} \boldsymbol{\phi}_{\boldsymbol{T}})$, where $\overset{d}{\to}$ connotes convergence in distribution. Although this identity provides a medium for representing bias, $\boldsymbol{\phi}_{\boldsymbol{T}}$ is not an intuitive object in comparison to the other forms explored. Since most inferential settings truly require valid statements for the variances only, we retreat to this forum. For instance, $\mathbf{B}_s = \sum_{k=1}^K \mathbf{w}_{s, k} \mathbf{Y}_k = \sum_{k=1}^K \sum_{j=1}^{n_k} w_{s, k, j} Y_{k, j} = \sum_{k=1}^K T_{s, k}$. We can thus resort to an iterated application of Proposition 1 and state $\text{Var}(\mathbf{B}_s) = \{1 + \mu_K \phi_K \} \sum_{k=1}^K \{1 + \mu_{n_k} \phi_{s, n_k} \} \sum_{j=1}^{n_k} w_{s, k, j}^2 \sigma_{k, j}^2$, where $(\mu_K, \phi_K)$ correspond to a linear dependency graph connected to $\{T_{s, k} \}_{k \in \mathcal{K}}$ and the $(\mu_{n_k}, \phi_{s,n_k})$ correspond to the cluster-specific dependency graphs for the outcome variables in each cluster $\zeta_k$. However, more simply, we can settle for $\text{Var}(\mathbf{B}_s) = \{1 + \mu_K \phi_K \} \sum_{k=1}^K \text{Var}(T_{s, k})$ to represent the bias of cluster-robust methods when consistent estimation of each $\text{Var}(T_{s, k})$ is possible.

Some additional preliminaries help to frame this exploration. Pertinently, we provisionally assume that A2 applies to an arbitrary $\mu_{n_r}$ and $\mu_{n_t}$ for $r,t \in \mathcal{K}$. This is not a critical assumption. However, it greatly simplifies notation. If one does not wish to apply it, the forthcoming statements hold by replacing $\mu_{n_k}$ with $\underset{k \in \mathcal{K}}{\text{max}}(\mu_{n_k})$. Now, recall that $\text{Var}(\mathbf{B}_s) = O\{ n^{-1} \text{max}(\mu_n, 1) \}$. From the above cluster expression and our working assumption, we also know that $\text{Var}(\mathbf{B}_s) = O \{n^{-2} \text{max}(\mu_K, 1) \cdot K n_k \cdot \text{max}(\mu_{n_k}, 1)  \}$, which simplifies to $ O \{n^{-1} \text{max}(\mu_K, 1) \cdot \text{max}(\mu_{n_k}, 1)  \}$ since $K n_k = O(n)$ for all $k$ under A2. This implies that $\text{max}(\mu_n, 1)$ and $\text{max}(\mu_K, 1) \cdot \text{max}(\mu_{n_k}, 1)$ are equivalent in order.

To progress, we note that $\hat{\text{Var}}(T_{s, k}) = \mathbf{\hat{C}}_{s, k} = \sum_{j=1}^{n_k} w_{s, k, j}^2 \hat{e}^2_{k, j} + \sum_{r \neq t} w_{s, k, r} w_{s, k, t} \hat{e}_{k, r} \hat{e}_{k, t}$ for an arbitrary cluster $\zeta_k$. The usual machinery requires that $\text{Var}(\mathbf{\hat{C}}_{s, k}) \to 0$ as $n \to \infty$. Since $\text{Var}(\mathbf{\hat{C}}_{s, k}) \leq 2 \{ \text{Var} ( \sum_{j=1}^{n_k} w_{s, k, j}^2 \hat{e}^2_{k, j}) + \text{Var}( \sum_{r \neq t} w_{s, k, r} w_{s, k, t} \hat{e}_{k, r} \hat{e}_{k, t}) \}$, it is sufficient to reason about the individual variances on the right-hand side. From a previous proposition, we know the form of $\text{Var} ( \sum_{j=1}^{n_k} w_{s, k, j}^2 \hat{e}^2_{k, j})$. Otherwise, $\text{Var}( \sum_{r \neq t} w_{s, k, r} w_{s, k, t} \hat{e}_{k, r} \hat{e}_{k, t}) = \{1 + \mu_{n_k(n_k -1)} \phi_{s,n_{k}(n_k -1)} \} \sum_{r \neq s}  w^2_{s, r, i} w^2_{s, t, i}  \text{Var}(\hat{e}_{k, r} \hat{e}_{k, t})$. For finite samples, we can conservatively expect $\mu_{n_k (n_k -1)} = n_k (n_k - 1) - 1$ in most circumstances. Moreover, for simplicity, we can assume that $\text{Var}(\hat{e}_{k, r} \hat{e}_{k, t}) \leq M$ and $\phi_{s, n_k(n_k-1)} \leq C_{\phi}$ WLOG in addition.

Now, let $\mathbf{\hat{C}}_s = \sum_{k=1}^K \mathbf{\hat{C}}_{s, k}$. We also require $\text{Var}(\mathbf{\hat{C}}_s) \to 0$, at least as a typical sufficient condition to establish that $\mathbf{\hat{C}}_s$ is a consistent estimator of the variance model that results from a theorized partition. However, now, we have: $\text{Var}(\mathbf{\hat{C}}_s) = \{1 + \mu_{K, *} \phi_{K, *} \} \sum_{k=1}^K \text{Var}(\mathbf{\hat{C}}_{s, k})$. We must show that this expression tends to zero as $n \to \infty$ to establish consistency. A small handful of cases will be considered. Essentially, we will need to reason about the asymptotic orders of $\mu_{K_*}, \mu_{n_k}, K$, and $n_k$. Note that we do not need to reason about $\mu_{n_k (n_k -1)}$ since---outside of exceptional situations---it is safe to assume that it has an order of $(n_k -1) \text{max}( \mu_{n_k}, 1)$. This is because---in the limit---each $\hat{e}_r \hat{e}_t$ can still be expected to be correlated, even if marginally, with other products that share at least one index in the minimum. If we hold one pair of subscripts for some $\hat{e}_r \hat{e}_t$ constant, there are $2 \cdot (n_k-1)$ others that meet this criteria. Since $\mu_{n_k (n_k -1)} \leq n_k (n_k -1) - 1$, the remaining factor should have an order that is equivalent to $\text{max}(\mu_{n_k}, 1)$. Although this has not been rigorously substantiated, we can provisionally assume this to be the case for this exploration without much loss. This is also because we truly only need it for Case 3.

\paragraph*{Case 1: $n_M \leq Q \in \mathbb{N}$} Under this assumption, $\text{Var}(\mathbf{\hat{C}}_{s, k}) = O(n^{-4})$. This follows from the fact that the estimator is a sum of a finite number of $O(n^{-4})$ terms. Hence, $\text{Var}(\mathbf{\hat{C}}_s) = O\{ \text{max}(n^{-3} \mu_{K, *}, n^{-3}) \}$ since $K=O(n)$. Under this scenario, the asymptotic order of $\text{Var}(\mathbf{\hat{C}}_s)$ is less than or equal to $O(n^{-2})$ due to the fact that $\mu_{K, *} \leq K-1$. Therefore, $\mathbf{\hat{C}}_s$ is consistent at a rate of $n^{-1}$ for its identified portion of $\text{Var}(\mathbf{B}_s)$, even when \textit{all} clusters are correlated. This also makes estimators of this case a good fit for plug-in Wald estimation. This is because it is then implied that $n^2 \text{Var}(\mathbf{\hat{C}}_s) = O\{ \text{max}(n^{-1} \mu_{K, *}, n^{-1}) \} \to 0$ as $n$ becomes arbitrarily large. This follows from the fact that $\mu_{K_*} \leq \mu_n$ and $\mu_n = o(n)$ by assumption. 

\paragraph*{Case 2: $n_m \to \infty, \mathcal{L}_{k}$ fully connected for $\forall k \in \mathcal{K}$} For this case, $\text{Var}(\mathbf{\hat{C}}_{s, k}) = O(n^{-4} n_k^2 \mu_{n_k(n_k -1)}) = O\{ (n^{-1} n_k)^4 \}$ since the sum of weighted $\hat{e}_r\hat{e}_t$ s.t. $r \neq t$ dominates the variance. Thus, we already know that a sufficient condition for consistency is that $n_k = o(n)$. This is violated when $K = O(1)$. Assuming that $n_k = o(n)$ implies that $\text{Var}(\mathbf{\hat{C}}_s) =  O\{ (n^{-1} n_M)^4 \cdot K \cdot \text{max}(\mu_{K, *}, 1)\} $. Thus, a sufficient condition for consistency is that $n_M^3 \text{max}(\mu_{K, *}, 1) = o(n^3)$. This condition is fulfilled since $n$ has an asymptotic order that is equivalent to $K n_M$. Utilization of a Wald-like statistic, however, requires that $n_M^3 \text{max}(\mu_{K, *}, 1) = o(n)$, which is obviously non-trivial.

These statements are easier to grasp when cluster sizes are equal as a special case. Then $n = K n_k$ for all $k$ and the general expression for the order of $\text{Var}(\mathbf{\hat{C}}_s)$ is $O\{K^{-3} \text{max}(\mu_{K, *}, 1) \}$. Since this implies $\text{Var}(\mathbf{\hat{C}}_s) \leq C_*K^{-2}$ for some $C_* \in \mathbb{R}^+$ even when all clusters are correlated, it is a consistent estimator of its portion of the variance. Again, $n^2 K^{-3} \text{max}(\mu_{K, *}, 1) $ converging to zero is sufficient for Wald usage. Using substitution, we can translate this expression to $K^{-1} n^2_k \text{max}(\mu_{K, *}, 1)$. This means that one sufficient condition for Wald usage provided this specific setup is that $\text{max}(\mu_{K, *}, 1)$ is $O(1)$ and for $K^{-1} n^2_k \to 0$, which is more restrictive and less forgiving of invalid specifications.

For an example of when this becomes problematic, consider a social network analysis context s.t. the total sample size of possible relationships in a directed network is $N=n(n-1)$, where $n$ is the number of actors sampled. Researchers often choose to cluster on actor identity. Then $K=n$ and $n_k = n-1$. Obviously, then, $K^{-1} n_k = O(1)$. As a result, the denominator of the Wald statistic would fail to converge to a constant. It would remain random in behavior.

\paragraph*{Case 3: $n_m \to \infty, \mu_{n_k} = o(n_k)$, $K = O(1)$} This case necessitates that $n_k =O(n)$. Under the condition that $\mu_{n_k (n_k -1)}$ has the same asymptotic order as $ (n_k -1) \text{max}(\mu_{n_k}, 1)$, it is then implied that $\text{Var}(\mathbf{\hat{C}}_{s, k}) = O\{ n^{-1} \text{max}(\mu_{n_k}, 1) \}$ and $\text{Var}(\mathbf{\hat{C}}_s) =  O\{ n^{-1} \text{max}(\mu_{n_k}, 1) \}$. Therefore, although $\mathbf{\hat{C}}_s$ is consistent, it will converge at a rate that also prevents its straightforward application for Wald statistics. This is because $n^2 \text{Var}(\mathbf{\hat{C}}_s)$ does not converge to zero. To see this, first consider the sub-case s.t. $ \text{max}(\mu_{n_k}, 1) = \mu_{n_k}$. When this is true, it is also true that $\mu_{n_k} = O(\mu_n)$. As a consequence, $n^2 \text{Var}(\mathbf{\hat{C}}_s) = O(n \mu_n) \to \infty$ and $\mathbf{\hat{C}}_s$ does not converge to a constant when used to construct a test statistic; it remains random. This situation obviously does not change for the remaining sub-case.

\subsubsection{A Problem of Choice}

Let us recall our central problem to re-frame the last investigation. We have an additive estimator $\mathbf{w}_s \mathbf{Y}$. However, although we believe that it is consistent, we have no access to $\mathcal{L}$ or $D$ and thus have limited, imperfect---or even incorrect---information pertaining thow to model $\text{Var}(\mathbf{w}_s \mathbf{Y})$. There are two important situations to now consider: the situation s.t. it is believed a central limit theorem holds and the one where it is believed that one does not. We temporarily consider the former.

If the researcher believes that a central limit theorem holds, then we have two non-mutually exclusive choices for Wald-like confidence sets: choose a defensible correction via Proposition 1 or use cluster-robust variance estimation under the auspices of a user-specified partition. Hitherto, the primary strategy is to use $\mathbf{\hat{C}}_s$. We know that Case 1 estimators are the most robust against invalid partition specifications from the above analysis. However, since they require small cluster sizes, even if asymptotic normality holds, a lot of variability is likely to be missed and a correction will still need to be applied.

Estimators in the spirit of Case 2 are also feasible choices, but their 'safe' Wald-like use is predicated upon restrictive conditions. Put shortly, under A2, we require $n_k$ to be $o(n^{1/3})$ when the number of correlated clusters is asymptotically bounded. If $\mu_{K_*}$ diverges with sample size, then each $n_k$ must grow at an even slower, but ultimately unknown rate. Still, even if these estimators can be validly used for plugin-Wald statistics, there is no guarantee that further correction is unnecessary. Asymptotically expanding cluster sizes still does not guarantee that $\mu_K = 0$.

Recall, however: the only explored case s.t. $\mathbf{\hat{C}}_s$ was inconsistent for its portion of the variance was Case 2 when $K = O(1)$. Otherwise, it is a consistent estimator of the target quantity identified by the theoretical variance model. The rate of convergence might limit its direct use as a plug-in estimator for confidence sets, but it is important to note that it can still be used to investigate the variance structure. Recall that Proposition 2 (in a circumstance of copious dependencies) almost guarantees that $\phi$ is non-negative. Condition on this supposition, then, and note that $\mathbf{w}_s \mathbf{Y} = \sum_{k=1}^{K_1} T_{s, k} = \sum_{k=1}^{K_2} T_{s, k, *}$ for any two partitions into $K_1$ and $K_2$ groups. Proposition 1 then allows us to state that $\{1 + \mu_{K_1} \phi_{K_1} \} \sum_{k=1}^{K_1} \text{Var} (T_{s, k}) = \{1 + \mu_{K_2} \phi_{K_2} \} \sum_{k=1}^{K_2} \text{Var} (T_{s, k, *})$. It then follows WLOG that $\{  \sum_{k=1}^{K_2} \text{Var} (T_{s, k, *}) \}^{-1} \sum_{k=1}^{K_1} \text{Var} (T_{s, k}) < 1$ implies that $ \mu_{K_2} \phi_{K_2} <  \mu_{K_1} \phi_{K_1}$. The latter inequality, of course, implies that the $K_2$ partition possesses \textit{less} missed variability and is more robust. Unsurprisingly, then, one should always aim to choose the partition that results in the \textit{highest} estimated variance. For ample sample sizes, one can use differences such as $\mathbf{\hat{C}}_{s, 2} - \mathbf{\hat{C}}_{s, 1}$ for this purpose. Nevertheless, this strategy does not guarantee a correct choice of partition---and still---a correction might be required.

One strategy that synthesizes both approaches is to use various theories of dependence and hence partitions to compare $\mathbf{\hat{C}}_s$ estimators from Cases 1, 2, and 3. If $n$ is large enough, this should---in the minimum---give the researcher at least imperfect snapshots of the magnitudes of portions of the missed variance. She can then condition on this new (incomplete) knowledge and use it to inform a choice of correction via Proposition 1 after making use of the $\mathbf{\hat{C}}_s$ estimator that yields the highest estimate. If it is believed that a $\mathbf{\hat{C}}_s$ estimator from Case 2 possesses a poor rate of convergence, a Case 1 or Proposition 6 type estimator is more appropriate since they possess a better rate of convergence and can be 'treated' as constants more safely for ample sample sizes. The information acquired from the comparisons of cluster-robust estimators can still be factored into the choice of a deterministic correction. For the alternative situation---the one s.t. dependencies are so thick that a central limit theorem fails to hold---the variance estimators can still be used for exploratory ends insofar as at least A3 is true.

\section{Finite Sample Inference with Dense Dependence \label{sec:4}}

Recall from Section 2: for any additive statistic, $\mu_D \phi_D = \mu_n \phi_n$ and $\phi_D = |L_D|^{-1}|L| \cdot \phi_n$. Hence, if $\mu_n = o(\mu_D)$, $\phi_D \to 0$. This offers a new perspective on a known possibility. Statistical dependencies can diverge in number, but $\mu_n$ can be $o(n)$ or even bounded. When the latter is stipulated, this simply means that $\mu_D(n) \phi_D(n)$ must be two such functions s.t. this is requirement is met. We do not need to know the details of these functions, otherwise. Their importance is that they inform us---even in the presence of dense statistical dependencies that prevent convergence to normality---that we can still reason about mean convergence for linear statistics and explore strategies for inference. This is what we accomplish here. We will be working under the supposition that $\mu_D \to \infty$ or that $\mu_D$ is even quite possibly $O(n)$. We will \textit{prefer} that $\mu_n  = o(n)$, but we will see that this is not a strict requirement. Provided this setup, we show here that comfortably bounding $\mu_n \phi_n$ for a given sample is feasibly sufficient for constructing cogent confidence sets through the use of Hoeffding's or Bernstein's inequalities. To do this, we first introduce a type of random variable. The reason we are discussing these variables is that they permit us to extend a handful of known concentration inequalities to circumstances s.t. $\mu_D =O(n)$.

\subsection{$\mathcal{U}$ Random Variables}

Defining this class of random variable requires familiarity with the average functional value from basic analysis. Say $R_i = \int_{\mathbb{R}} 1_{y_i \in \mathcal{S}} dy_i$, where $\mathcal{S}_i$ is the support of the cumulative distribution function (CDF) of $Y_i$, say $F(y_i)$. For counting measures, $R_i = |\mathcal{S}_i|$. When $Y = g(X_1, \ldots, X_k)$ for some function $g$, we can generalize this WLOG to $R_{\mathbf{x}} = \int_{\mathbb{R}^k} 1_{ (x_1, \ldots, x_k )\in \mathcal{S}^k} dx_1 \hdots dx_k$, where $\mathcal{S}^k \subseteq \mathcal{S}_1 \times \hdots \times \mathcal{S}_k$. Given this setup, and temporarily abandoning subscripts for readability, the average function value is then $\text{Av}(Y) = R^{-1} \int_{\mathcal{S}} y dy$. Although we can also define $\text{Av}_{\mathbf{x}}(Y) = R_{\mathbf{x}}^{-1} \int_{\mathcal{S}^k} g(x_1, \ldots, x_k) dx_1 \hdots dx_k$, the former is mostly the focus here for exposition. A random variable $Y$ is in the $\mathcal{U}$ class if and only if $\text{E}Y = \text{Av}(Y)$. Similarly, it is said to be in the $\mathcal{U}$ class w.r.t. $\mathbf{X}$ if  $\text{E}Y = \text{Av}_{\mathbf{x}}(Y)$. Finally, a random variable is said to be \textit{regular} if its associated support is a single interval of real numbers or a set of integers $\{m, m+1, \ldots, M-1, M \}$ s.t. no integer is missing between $m$ and $M$. In this case, it is easy to verify that $\text{Av}(Y) = 2^{-1} (M+m)$ and membership in the $\mathcal{U}$ class implies that $\text{E}Y = 2^{-1}(M+m)$.

A set of basic properties for this class are proven in the supplementary materials. Here, it is sufficient to note only a small handful of them. First, if $Y$ is a regular random variable s.t. $\text{E}Y=0$, it is a member of the $\mathcal{U}$ class if and only if $m= -M$, i.e., the CDF of $Y$ is defined on symmetric support. Moreover, a regular and continuous random variable $Y$ is in the $\mathcal{U}$ class if and only if $\int_{\mathcal{S}} F(y) dy = \int_{\mathcal{S}} S(y) dy$, where $S(y) =  1- F(y)$, and hence equivalently if $M-\text{E}Y = \text{E}Y - m$. Pertinently, although random variables with symmetric probability distributions are $\mathcal{U}$ random variables, they are only a special case. $\mathcal{U}$ random variables can be asymmetric. Lastly, it can be easily shown that for any constant $c$, if $Y \in \mathcal{U}$, i.e., it is in the class of $\mathcal{U}$ random variables, then $cY \in \mathcal{U}$ and $Y + c \in \mathcal{U}$.

These variables are quite common. For instance, we have already established that all regular and symmetric bounded random variables are in this group. If $Y$ has a density $f(y) \propto \sigma_Y$ and $\sigma_y \to 0$ as $n \to \infty$, then $Y$ behaves more and more like a $\mathcal{U}$ random variable as $n$ becomes arbitrarily large. We will also see that the additive error variables associated with density or mass approximation are also elements when the approximating function is correct on average. More important for our purposes, however, is the connection between $\mathcal{U}$ status and the common linear model: $\mathbf{Y} = \mathbf{x} \boldsymbol{\beta} + \boldsymbol{\epsilon}$. Two conditions are ubiquitously supposed for its usage: (1) $\text{E}(\boldsymbol{\epsilon} | \mathbf{x}) = 0$, and (2) $\epsilon_i \sim N(0, \sigma_i^2)$ for $\forall i$. The latter assumption, however, is simply fecund. In reality, no observable error distribution for a $Y_i$ on finite support could ever truly be normally distributed. This common assumption is best understood as a mathematically convenient statement that results in (hopefully) negligible error. For instance, we could replace the traditional assumption with the assertion that each $\epsilon_i$ possesses a normal distribution that has been symmetrically truncated at a value that is extreme enough to leave out negligibly small probabilities. Such an assertion would be empirically isomorphic to the traditional assumption---and importantly---it would result in each $\epsilon_i$ being a $\mathcal{U}$ random variable. Therefore, it is safe to work in a universe s.t. $\mathbf{w}_s \mathbf{Y} - \beta_s = \sum_{i=1}^n w_{s, i} \epsilon_i$ is a sum of $\mathcal{U}$ functions. Positing that $\{ \epsilon_i \}_{i \in I}$ is a set of regular and symmetric $\mathcal{U}$ random variables is also less restricting than assuming truncated normality since many more distributions meet these criteria.

In the fixed regression setting, $\mathcal{U}$ status can be feasibly verified using the same plot that is used to check for violations of linearity. Namely, we would look at the residual versus fitted plot to confirm that the scatter of points is mean-zero at any location of the graph, and that they are randomly dispersed between two points that are roughly equidistant from the horizontal axis. Under the null assumption of a well-specified model, this is sufficient. More generally, however, when the $Y_i$ are continuous and identically distributed, we could verify that the behavior of the empirical CDF matches the sum-symmetric behavior described in the previous paragraph. Otherwise, for the discrete case, the empirical probability mass function could be plotted against the observed values to see if it demonstrates a sum-symmetric reallocation of mass from the area around the mode of the distribution to the tails.

\subsection{Finite Sample Inference for $\mathcal{U}$ Random Variables}

This section accomplishes two primary objectives. One, it introduces and discusses an additional regularity condition that is required for further theoretical work. Two, it utilizes this condition to extend the Hoeffding and Bernstein inequalities to sums of regular and symmetric $\mathcal{U}$ random variables under dense dependence. For this section, $\mathbf{w}$ is any $1 \times n$ vector of constants. Moreover, $\text{Av}_* ( \cdot)$ indicates a functional average taken over a Cartesian product of sets, i.e., over $\mathcal{S}_*^k = \mathcal{S}_1 \times \hdots \times \mathcal{S}_k$. Since the random variables being considered are bounded, it is helpful to know that all referenced objects exist for finite $n$.
\begin{enumerate}
\item[A5.] Let $\boldsymbol{\epsilon}$ be a $n \times 1$ vector of bounded random variables s.t. $\text{E}\boldsymbol{\epsilon} = 0$. For all $s > 0$, it is then true that $\text{max} \left ( \text{E} \left ( \text{exp} \{ s \cdot \mathbf{w}\boldsymbol{\epsilon} \} \right), \text{E} \left ( \text{exp} \{ s \cdot (-\mathbf{w}\boldsymbol{\epsilon}) \} \right) \right ) \leq  \text{Av}_* \left ( \text{exp} \{s \cdot \mathbf{w}\boldsymbol{\epsilon}  \} \right )$
\end{enumerate}
This new condition places an implicit bound on the behavior of the distributions of the marginal random variables and their joint distribution simultaneously. This differs from traditional approaches, which mostly place restrictions on the dependency structure. Put succinctly, for A5 to be true, the allocation of mass or density has to be somewhat balanced for the marginal distributions, and the joint distribution has to be biased away from elements of the joint support that produce larger values of the target statistic in addition.

Mutual independence alone does not guarantee A5. To see this, assume that $\text{E}(\text{exp}\{ s w_i \epsilon_i \}) > \text{Av}(\text{exp}\{ s w_i \epsilon_i \})$ for $\forall i$. Then $\text{E} \left ( \text{exp} \{ s \cdot \mathbf{w}\boldsymbol{\epsilon} \} \right) = \prod_i^n \text{E}(\text{exp}\{ s w_i \epsilon_i \}) > \prod_i^n \text{Av}(\text{exp}\{ s w_i \epsilon_i \}) =  \text{Av}_* \left ( \text{exp} \{s \cdot \mathbf{w}\boldsymbol{\epsilon}  \} \right )$ and A5 is false. Pertinently, this does not make A5 more restrictive than mutual independence in general. It simply informs us that the conditions that would supply it are somewhat complex when there are no constraints placed on the marginal distributions. The informal description in the previous paragraph hints that unimodal and symmetric marginal distributions prevent this quandary from occurring under mutual independence. This indeed turns out to be true. Before showing this, we prove a pivotal lemma. Note that the $\implies$ symbol is readable as 'implies.'
\begin{lemma}
Let $Z \in \mathcal{U}$ be regular and continuous s.t. $\text{E}Z = 0$ and $\text{max}(\mathcal{S}) = M$. Let $s >0$ and $w \in \mathbb{R}$ be arbitrary constants. Then $\text{Av}(\text{exp} \{s w Z \}) \leq \text{exp} \{24^{-1} s^2 w^2 R^2 \}$. 
\end{lemma}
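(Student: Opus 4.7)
The plan is to reduce the claim to a clean one-variable inequality involving hyperbolic sine, and then verify that inequality by a term-by-term comparison of Taylor series.

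First I would unpack the hypotheses. Since $Z$ is a regular continuous $\mathcal{U}$ random variable with $\text{E}Z = 0$, the basic properties of the $\mathcal{U}$ class (stated in Section 4.1: $\text{E}Z = 0$ combined with regularity forces $m = -M$) imply that the support is the symmetric interval $\mathcal{S} = [-M, M]$, so $R = 2M$. The functional average can then be computed explicitly:
\begin{equation*}
\text{Av}(\exp\{swZ\}) \;=\; \frac{1}{2M} \int_{-M}^{M} \exp\{swz\}\, dz \;=\; \frac{\sinh(swM)}{swM},
\end{equation*}
with the convention that the right-hand side equals $1$ when $sw = 0$. Writing $u = swM$ and noting that $R^2 = 4M^2$ gives $s^2 w^2 R^2 / 24 = u^2/6$, so the statement reduces to showing
\begin{equation*}
\frac{\sinh(u)}{u} \;\leq\; \exp\!\left(\frac{u^2}{6}\right) \qquad \text{for all } u \in \mathbb{R}.
\end{equation*}
Both sides are even functions of $u$, so the sign of $w$ plays no role.

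Next I would verify this one-variable inequality by comparing power series. The even expansions are
\begin{equation*}
\frac{\sinh(u)}{u} = \sum_{k=0}^{\infty} \frac{u^{2k}}{(2k+1)!}, \qquad \exp\!\left(\frac{u^2}{6}\right) = \sum_{k=0}^{\infty} \frac{u^{2k}}{6^k\, k!},
\end{equation*}
both with nonnegative terms, so it suffices to establish the coefficient-wise bound $(2k+1)! \geq 6^k\, k!$ for every $k \geq 0$. Setting $a_k = (2k+1)!\,/\,(6^k k!)$, one has $a_0 = 1$ and
\begin{equation*}
\frac{a_{k+1}}{a_k} = \frac{(2k+3)(2k+2)}{6(k+1)} = \frac{2k+3}{3} \;\geq\; 1,
\end{equation*}
so $a_k$ is nondecreasing in $k$ and therefore $a_k \geq 1$ for all $k \geq 0$. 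This gives the desired coefficient-wise bound and hence the inequality for all $u$, which completes the proof after substituting back $u = swM$ and using $R = 2M$.

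There is not really a hard obstacle here: the only conceptual step is recognizing that once the $\mathcal{U}$-class and mean-zero hypotheses pin down symmetry of the support, the ``average of exp'' integrates in closed form to $\sinh(u)/u$, after which the inequality is a classical Taylor-series comparison. The constant $1/24$ is exactly what drops out of matching the leading $u^2$ coefficients on both sides — $\sinh(u)/u = 1 + u^2/6 + \cdots$ versus $\exp(u^2/6) = 1 + u^2/6 + \cdots$ — and the ratio $a_{k+1}/a_k = (2k+3)/3$ ensures the higher-order terms only widen the gap, which is the one calculation I would want to double-check carefully.
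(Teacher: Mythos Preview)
Your proof is correct and follows essentially the same route as the paper: both reduce to the Taylor-coefficient comparison $(2k+1)! \geq 6^k\,k!$ after expanding the functional average on $[-M,M]$, with your version recognizing the closed form $\sinh(u)/u$ up front and using the cleaner ratio $a_{k+1}/a_k = (2k+3)/3$ in place of the paper's longer induction.
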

\begin{proof}
We note that $\int_{\mathcal{S}} z^k dz = 0$ for all odd integers $k$ when $Z$ is a continuous and regular $\mathcal{U}$ random variable s.t. $\text{E}Z=0$. This is because $0 = 2^{-1} (M + m)$, which implies that $m = -M$. Hence $\int_{\mathcal{S}} z^k dz = (k+1)^{-1} \{ M^{k+1} - (-M)^{k+1} \} = 0$ since $k+1$ is even. This implies that $\text{Av}(Z^k) = 0$ for all odd integers $k$. Additionally, when $k$ is even, observe that $\int_{\mathcal{S}} z^k dz = (k+1)^{-1} 2M^{k+1}$ and hence $\text{Av}(Z^k) = (k+1)^{-1} M^k$.

Let $2\mathbb{N}_0 = \{ n \in \mathbb{N} | n = 2k, k \in \mathbb{N} \cup \{ 0 \} \}$. From here, for any constants $s > 0$ and $w \in \mathbb{R}$:
$$\text{exp} \{s w Z \} = \sum^{\infty}_{i=0} \{ i! \}^{-1} s^i w^i Z^i \implies$$
$$\text{Av}(\text{exp} \{s w Z \}) = \sum^{\infty}_{i=0} \{ i! \}^{-1} s^i w^i \text{Av}(Z^i) = \sum_{i \in 2\mathbb{N}_0 }  \{ i! \}^{-1} \{i + 1 \}^{-1} s^i w^i M^i = \sum^{\infty}_{i = 0}  \{ 2i! \}^{-1} \{2i + 1 \}^{-1} w^{2i} s^{2i} M^{2i}$$
Now, we compare two sequences: $S_1 = (2k)! \cdot (2k+1)$ and $S_2 = k! \cdot 6^k$ for $k \in \mathbb{N}$. We will prove that $S_1 \geq S_2$ for $\forall k \in \mathbb{N}$ by induction. Since $S_1 (0) = S_2(0) = 1$ and $2 \cdot (2k + 1) \geq 6$ for $k \geq 1$, the base cases are established. Now, suppose $(2n)!\cdot (2n+1) \geq n! \cdot 6^n$ for $n \geq 1$.
\begin{align*}
(2n)!\cdot (2n+1) & \geq n! \cdot 6^n \\
2 \cdot (2n + 1) & \geq 6 \implies \\
2 \cdot (2n + 1)\cdot (2n)!\cdot (2n+1) & \geq n! \cdot 6^{n+1} \implies \\
2 \cdot (n + 1) \cdot (2n + 1)\cdot (2n)!\cdot (2n+1) & \geq (n + 1) \cdot n! \cdot 6^{n+1} \implies \\
\{2 (n+1) \}! \cdot (2n+1) & \geq (n+1)! \cdot 6^{n+1} \implies \\
\{2 (n+1) \}! \cdot \{2(n+1) +1 \} & \geq (n+1)! \cdot 6^{n+1}
\end{align*}
Therefore, $S^{-1}_1 = \{ (2k)! \cdot (2k+1) \}^{-1} \leq S^{-1}_2 = \{ k! \cdot 6^k \}^{-1}$ for all $k \in \mathbb{N}$. Hence:
\begin{align*}
\text{Av}(\text{exp} \{s w Z \}) = \sum^{\infty}_{i = 0}  \{ 2i! \}^{-1} \{2i + 1 \}^{-1} w^{2i} s^{2i} M^{2i} \leq  \sum^{\infty}_{i = 0}  \{ i! \}^{-1} \{ 6^i \}^{-1} w^{2i} s^{2i} M^{2i} = \sum^{\infty}_{i = 0}  \{ i! \}^{-1} \{6^{-1}  s^2w^2M^2 \}^i = \text{exp} \{6^{-1} s^2w^2M^2 \}
\end{align*}
Since $M= 2^{-1} R$ for $\mathcal{U}$ variables, $\text{Av}(\text{exp} \{s w Z \}) \leq \text{exp} \{24^{-1} s^2w^2R^2 \} $.
\end{proof}
\begin{proposition}
Suppose $Z \in \mathcal{U}$ is a regular and continuous random variable s.t. $\text{E}Z=0$. If $\text{E}Z^k \leq \{2^k \cdot (k+1) \}^{-1} R^k$ for all even integers $k$ and $\text{E}Z^k \leq 0$ for odd integers $k$, then $\text{E} (\text{exp} \{ s Z \}) \leq \text{Av} (\text{exp} \{ s Z \})$ for any constant $s > 0$. If $Z$ has a symmetric probability distribution, $s$ can be any constant.
\end{proposition}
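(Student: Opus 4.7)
The plan is to compare $\text{E}(\exp\{sZ\})$ and $\text{Av}(\exp\{sZ\})$ by expanding both as power series and matching them term by term against the moment hypotheses. The first step is to observe that the moment computation carried out inside the proof of Lemma 1 already delivers exactly what we need: because $Z$ is regular, continuous, in $\mathcal{U}$, and centered, its support is the symmetric interval $[-M,M]$ with $R = 2M$, and so $\text{Av}(Z^k) = 0$ for odd $k$ and $\text{Av}(Z^k) = \{2^k(k+1)\}^{-1} R^k$ for even $k$. Consequently the two moment bounds in the hypothesis can be combined into the single unified inequality $\text{E}Z^k \leq \text{Av}(Z^k)$ for every $k \geq 1$, with trivial equality at $k = 0$.

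Next, I would write $\exp\{sZ\} = \sum_{k=0}^{\infty} (k!)^{-1} s^k Z^k$. Since $|Z| \leq M$, the series converges uniformly in $z$ over $\mathcal{S}$ for each fixed $s$, which authorizes interchanging the summation with $\text{E}(\cdot)$ and with the average operator $R^{-1}\int_{\mathcal{S}}(\cdot)\,dz$ via bounded convergence. Subtracting yields
$$\text{E}(\exp\{sZ\}) - \text{Av}(\exp\{sZ\}) = \sum_{k=1}^{\infty} (k!)^{-1} s^k \bigl\{\text{E}Z^k - \text{Av}(Z^k)\bigr\},$$
with each bracketed factor nonpositive by the previous step.

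For $s > 0$, every $s^k$ is positive, so every summand is nonpositive and the claimed inequality follows at once. For the symmetric case, $Z \stackrel{d}{=} -Z$ forces $\text{E}Z^k = 0 = \text{Av}(Z^k)$ for all odd $k$, so the odd-index terms (the only ones whose sign could flip with $s$) drop out identically; the surviving even-$k$ terms carry $s^k > 0$ for any real $s$, so the same direction of inequality is preserved for every $s \in \mathbb{R}$.

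The only step that needs formal justification is the interchange of the infinite sum with $\text{E}$ and with the functional average, and this is immediate from boundedness of $Z$, so there is no real analytic obstacle. The main conceptual point of the argument is simply to recognize that the hypothesized moment bounds were calibrated precisely so as to reproduce the series coefficients of $\text{Av}(\exp\{sZ\})$ already computed inside Lemma 1; once this is noticed, the proposition becomes a term-by-term comparison.
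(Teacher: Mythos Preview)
Your proof is correct and follows essentially the same approach as the paper: both expand $\text{E}(\exp\{sZ\})$ and $\text{Av}(\exp\{sZ\})$ as power series, invoke the moment computations from Lemma~1 to identify $\text{Av}(Z^k)$, and then compare term by term using the hypothesized bounds. Your unification of the two moment hypotheses into the single inequality $\text{E}Z^k \leq \text{Av}(Z^k)$ is a clean presentational touch, and your explicit justification of the series--integral interchange via boundedness is a small rigor upgrade, but the substance is the same as the paper's argument.
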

\begin{proof}
From Lemma 1, we know that $\text{Av}(\text{exp} \{s Z \}) = \sum_{i \in 2\mathbb{N}_0}  \{ i! \}^{-1} \{ 2^i \cdot (i + 1) \}^{-1} s^i R^i$. Therefore:
\begin{align*}
\text{E}(\text{exp} \{s Z \}) =  \sum^{\infty}_{i=0} \{ i! \}^{-1} s^i \text{E}(Z^i) \leq  \sum_{i \in 2\mathbb{N}_0}  \{ i! \}^{-1} s^i \text{E}Z^i \leq \sum_{i \in 2\mathbb{N}_0}  \{ i! \}^{-1} \{ 2^i \cdot (i + 1) \}^{-1} s^i R^i = \text{Av}(\text{exp} \{s Z \})
\end{align*}
The expansion of the proposition to the case of symmetric probability distributions is quick. Under this case, all odd moments are also zero. Therefore, the first inequality of the latter sequence of logic becomes a strict equality. Since only even indexes are left, $s$ can be any constant.
\end{proof}
\begin{lemma}
If $Z$ is a symmetric, regular, and unimodal random variable s.t. $\text{E}Z =0$ and $\text{max}(\mathcal{S}) =M$, then $\text{E}Z^k \leq \text{Av}(Z^k)$ for all even integers $k$ when $Z$ is continuous. If $Z$ is discrete and symmetric with support $\mathcal{S} = \{-c_{M}, -c_{M-1}, \ldots, 0, \ldots, c_{M-1}, c_M \}$ w.r.t. constants $c_1 < c_2 < \cdots < c_{M-1} < c_M$, then $\text{E}Z^k \leq \{| \mathcal{S}| -1 \}^{-1} |\mathcal{S}| \cdot \text{Av}(Z^k)$.
\end{lemma}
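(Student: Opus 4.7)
The plan is to bound the even moment $\text{E}Z^k$ against the average-value functional by pairing $z^k$ (non-decreasing on $[0,M]$ for even $k$) against the density or mass function (non-increasing on $[0,M]$ by symmetric unimodality) and invoking Chebyshev's integral (resp.\ sum) inequality for oppositely monotone functions. This turns the problem into elementary bookkeeping about the normalization of probability mass on $[0,M]$.

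For the continuous case, I would first use the evenness of $k$ and the symmetry of $Z$ to collapse both sides to integrals on the positive half: $\text{E}Z^k = 2\int_0^M z^k f(z)\,dz$ and, reusing the computation from Lemma 1, $\text{Av}(Z^k) = M^{-1}\int_0^M z^k\,dz = M^k/(k+1)$. Since $Z$ is symmetric and unimodal about $0$, $f$ is non-increasing on $[0,M]$, while $z^k$ is non-decreasing there. Chebyshev's integral inequality then gives $M\int_0^M z^k f(z)\,dz \leq \left(\int_0^M z^k\,dz\right)\left(\int_0^M f(z)\,dz\right)$, with the last factor equal to $1/2$ by symmetry and $\int_{\mathbb{R}} f = 1$. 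Multiplying through by $2/M$ produces $\text{E}Z^k \leq M^k/(k+1) = \text{Av}(Z^k)$.

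The discrete case is structurally identical. Using $p_i = p_{-i}$ and $0^k = 0$ for $k \geq 2$, one obtains $\text{E}Z^k = 2\sum_{i=1}^M c_i^k p_i$ and $\text{Av}(Z^k) = (2M+1)^{-1}\cdot 2\sum_{i=1}^M c_i^k$, so the target inequality reduces to $\text{E}Z^k \leq M^{-1}\sum_{i=1}^M c_i^k$. Unimodality at the origin forces $p_1 \geq p_2 \geq \cdots \geq p_M$, while $c_i^k$ is non-decreasing in $i$ because the $c_i$ are positive and increasing and $k$ is even. Chebyshev's sum inequality therefore yields $M\sum_{i=1}^M c_i^k p_i \leq \left(\sum_{i=1}^M c_i^k\right)\left(\sum_{i=1}^M p_i\right)$, and the probability-mass constraint $p_0 + 2\sum_{i=1}^M p_i = 1$ bounds the right-hand sum by $1/2$. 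Doubling and dividing by $M$ closes the argument.

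The main obstacle is really just identifying the right classical tool: the monotonicity structure is dictated once and for all by symmetric unimodality, and the normalization $\int_0^M f = 1/2$ (resp.\ $\sum_{i=1}^M p_i \leq 1/2$) supplies the factor of $1/2$ that matches $\text{Av}(Z^k) = M^k/(k+1)$. The mild inflation $(|\mathcal{S}|-1)^{-1}|\mathcal{S}|$ in the discrete bound is not slack in the argument but an artifact of the definition of $\text{Av}$, which includes the zero atom in its denominator while $0^k$ contributes nothing to the numerator.
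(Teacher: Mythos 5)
Your proposal is correct and follows essentially the same route as the paper's proof: both reduce to one half of the support by symmetry and evenness of $k$, apply Chebyshev's integral (resp.\ sum) inequality to the oppositely monotone pair $z^k$ and $f(z)$, and use the normalization $\tfrac12$ on the half-support, with the discrete inflation factor $\{|\mathcal{S}|-1\}^{-1}|\mathcal{S}|$ arising exactly as you describe (the paper works on $[-M,0]$ rather than $[0,M]$, which is an immaterial difference).
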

\begin{proof}
We provide a proof for the continuous case first.

By definition, $f(z)$ is increasing on $[-M, 0]$ and decreasing on $[0, M]$ if $Z$ is unimodal and symmetric. However, $z^k$ is decreasing on $[-M, 0]$ and increasing on $[0, M]$ when $k$ is an even integer. WLOG, we reason w.r.t. $[-M, 0] \subset \mathcal{S}$. By Chebyshev's integral inequality, it is then true that $\int^0_{-M} z^k f(z) dz \leq M^{-1} \int^0_{-M} z^k dz \cdot \int^0_{-M} f(z) dz$ since $z^k$ and $f(z)$ have opposite monotonicities on $[-M, 0]$. Furthermore, since $\int^0_{-M} f(z) dz = 2^{-1}$ and $(-c)^k f(-c) = c^k f(c)$ for $\forall c \in \mathcal{S}$ when $k$ is an even integer and $f(z)$ is symmetric, $2 \cdot \int^0_{-M} z^k f(z) dz = \text{E}Z^k$. It is also obviously the case that $ 2 \cdot \int^0_{-M} z^k dz = \int^M_{-M} z^k dz$ since $k$ is an even integer. Therefore, $\int^0_{-M} z^k f(z) dz \leq M^{-1} \int^0_{-M} z^k dz \cdot \int^0_{-M} f(z) dz$ implies that $\text{E}Z^k \leq  (2M)^{-1} \cdot \int^M_{-M} z^k dz = \text{Av}(Z^k)$.

Now, for the discrete case, we will say that $\mathcal{S} = \{-M, -(M-1), \ldots, M-1, M \}$ WLOG and that $\sum^{-1}_{z=-M} f(z) = \sum_{z=1}^M f(z)$ and therefore that $2 \cdot \sum^{-1}_{z=-M} f(z) = 1 - f(0)$. Once again, we also note that $f(z)$ is increasing on $\{-M, \ldots, -1 \}$ and $z^k$ is decreasing on this same set for even integers $k$. Thus, by the Chebyshev sum inequality, $\sum^{-1}_{z=-M} z^k f(z) \leq M^{-1} \sum^{-1}_{z=-M} z^k \cdot \sum^{-1}_{z=-M} f(z) \implies 2 \cdot \sum^{-1}_{z=-M} z^k f(z) \leq M^{-1} \cdot 2 \cdot \sum^{-1}_{z=-M} z^k \cdot \{ 2^{-1} - 2^{-1} f(0) \}$. Since $z \cdot f(z) = 0$ and $z^k=0$ when $z=0$, it is then implied that $\text{E}Z^k \leq (2M)^{-1} \sum_{\mathcal{S}} z^k \cdot  \{ 1 - f(0) \} \leq  (2M)^{-1} \sum_{\mathcal{S}} z^k = \{| \mathcal{S}| -1 \}^{-1} |\mathcal{S}| \cdot \text{Av}(Z^k)$. The last sequence of logic is true because $|\mathcal{S}| = 2M + 1$ and $f(0) > 0$.
\end{proof}
\begin{corollary}
Suppose $Z$ is a regular, symmetric $\mathcal{U}$ random variable s.t. $\text{E}Z = 0$ and $\text{max}(\mathcal{S}) =M$. If $Z$ is continuous, then $\text{E}(\text{exp} \{s Z \}) \leq \text{Av}(\text{exp} \{s Z \})$ for any $s \in \mathbb{R}$. If $Z$ is discrete as defined in Lemma 2, then $\text{E}(\text{exp} \{s Z \}) \leq \{ 2M \}^{-1} \{2M + 1 \} \cdot \{ \text{Av}(\text{exp} \{s Z \}) - \{2M +1 \}^{-1} \}$ for any $s \in \mathbb{R}$.
\end{corollary}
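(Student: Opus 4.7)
The plan is to exploit the Taylor expansion $\text{exp}\{s Z\} = \sum_{k=0}^{\infty} (k!)^{-1} s^k Z^k$ and to compare the series for $\text{E}(\text{exp}\{s Z\})$ and $\text{Av}(\text{exp}\{s Z\})$ term by term, just as in the Proposition that precedes this corollary. Since $Z$ is symmetric about zero, every odd moment of $Z$ vanishes, so the series for $\text{E}(\text{exp}\{s Z\})$ collapses to a sum over even $k$. Similarly, $\text{Av}(Z^k) = 0$ for odd $k$: in the continuous case because $\int_{-M}^{M} z^k dz = 0$, and in the discrete case because $\sum_{z = -M}^{M} z^k = 0$ whenever $k$ is odd and $\mathcal{S}$ is symmetric about $0$. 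Hence the series for $\text{Av}(\text{exp}\{s Z\})$ also reduces to a sum over even $k$. A key consequence is that every surviving term carries $s^k$ with $k$ even, so $s^k \geq 0$ regardless of the sign of $s$; this is precisely what upgrades the range from $s > 0$ (as in the preceding Proposition) to any $s \in \mathbb{R}$.

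For the continuous case, Lemma 2 supplies $\text{E}(Z^k) \leq \text{Av}(Z^k)$ for every even $k$. Multiplying by the non-negative quantity $s^k / k!$ and summing over even $k$ yields $\text{E}(\text{exp}\{s Z\}) \leq \text{Av}(\text{exp}\{s Z\})$ directly, giving the first claim.

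For the discrete case, Lemma 2 gives $\text{E}(Z^k) \leq (2M)^{-1}(2M+1) \cdot \text{Av}(Z^k)$ for every even $k$, but applying this uniformly at $k = 0$ would inflate a term that already equals $1$ on both sides by the spurious factor $(2M+1)/(2M)$. So I would peel off the $k = 0$ term and apply the Lemma 2 bound only to $k \geq 2$, obtaining
\[
\text{E}(\text{exp}\{s Z\}) = 1 + \sum_{k \geq 2,\, k \text{ even}} (k!)^{-1} s^k \text{E}(Z^k) \leq 1 + \frac{2M+1}{2M} \sum_{k \geq 2,\, k \text{ even}} (k!)^{-1} s^k \text{Av}(Z^k).
\]
Then I would use the identity $\sum_{k \geq 2,\, k \text{ even}} (k!)^{-1} s^k \text{Av}(Z^k) = \text{Av}(\text{exp}\{s Z\}) - 1$ (valid because all odd $\text{Av}$ moments vanish) to substitute, and a short algebraic rearrangement collapses the bound to $(2M)^{-1}(2M+1) \{ \text{Av}(\text{exp}\{s Z\}) - (2M+1)^{-1} \}$, matching the stated form.

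The main obstacle is precisely the bookkeeping around the $k = 0$ term in the discrete case: without isolating it, the leading $1$ gets inflated to $(2M+1)/(2M)$ and the resulting bound is both weaker and not in the form claimed, which is why the additive correction $-(2M+1)^{-1}$ appears inside the braces. Beyond that, the argument is a routine termwise matching of absolutely convergent power series, with boundedness of $Z$ ensuring every sum and rearrangement is justified.
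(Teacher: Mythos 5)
Your proposal is correct and follows essentially the same route as the paper: the continuous case is the termwise even-moment comparison already packaged in Lemma 2 together with Proposition 8, and the discrete case peels off the $k=0$ term, applies the $(2M)^{-1}(2M+1)$ factor from Lemma 2 to the remaining (even) terms, and rearranges $1 + (2M)^{-1}(2M+1)\{\text{Av}(\text{exp}\{sZ\})-1\}$ into the stated form. Your explicit remark about why the $k=0$ term must be isolated is exactly the bookkeeping the paper leaves implicit.
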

\begin{proof}
Let $s$ be arbitrary. For the continuous case, Lemma 2 and Proposition 8 imply the result. For the discrete case, we again note that $\text{Av}(Z^k) = 0$ when $k$ is an odd integer because $\sum^{-1}_{z=-M} z^k = - \sum^M_{z=1} z^k$ and $z^k =0$ when $z=0$.  From the premises and Lemma 2, we know that $\text{E} (\text{exp} \{s Z \}) \leq 1 + (2M)^{-1} (2M +1) \cdot \sum^{\infty}_{i=1} \{ i! \}^{-1} s^{i} \text{Av}(Z^{i})$. Then:
$$\text{E} (\text{exp} \{s Z \}) \leq 1 + (2M)^{-1} (2M +1) \cdot \sum^{\infty}_{i=1} \{ i! \}^{-1} s^{i} \text{Av}(Z^{i}) = 1 + (2M)^{-1} (2M +1) \cdot \{ \text{Av}(\text{exp} \{s Z \}) -1 \}$$
The statement of interest arrives via basic algebra and is therefore omitted.
\end{proof}
Proposition 8 and the subsequent statements establish that symmetric and regular $\mathcal{U}$ random variables---precisely the type of variable that often arises in linear regression settings, as aforementioned---are good candidates for A5. Again, this does not mean that other types of marginal distributions that are not strictly $\mathcal{U}$, regular, or symmetric are not. It is possible to imagine how different dynamics between marginal and joint distributions could still deliver A5. However, a class of variables that can serve as a basic case under mutual independence provides an anchor.

Constraints on the joint distribution are next. From here, the use of A5 can be justified via recourse to another set of identities concerning $\mathcal{U}$ variables. Unfortunately---in the continuous case---these identities require the existence of a high-dimensional density. Again say $Z$ is an arbitrary bounded random variable with mass or density function $f(z)$ and let $L = f(z_1, \ldots, z_n)$ denote a joint mass function or density w.r.t. $Y = g(Z_1, \ldots, Z_n)$ for some function $g$. The two identities are:
\begin{equation}
\text{Av}(Z) = \text{E}Z + R^{-1} \sigma_{Z, f^{-1}(Z)}
\end{equation}
\begin{equation}
\text{Av}\{ g(Z_1, \ldots, Z_n) \} = \text{E}Y + R_{\mathbf{z}}^{-1} \sigma_{g(Z_1, \ldots, Z_n), L^{-1}}
\end{equation}
For random variables on finite support, inverse mass functions---both joint and marginal---will always exist for finite $n$. Remember: the support of a marginal or joint distribution is the closure of the set of values s.t. $f(z) > 0$ and $L > 0$ respectively. Hence, $L^{-1}$ and $f^{-1}(z)$ are well-defined for mass functions. They are also well-defined for marginal and joint densities of bounded random variables if the basic objects exist: a precondition that requires the absolute continuity of the cumulative probability functions.

Immediately, we can see that a variable $Z$ is in the $\mathcal{U}$ class if and only if it is uncorrelated with its own inverse mass or density. Just the same w.r.t. Eq. (2), we can see that $\text{E}Y \leq \text{Av}\{ g(Z_1, \ldots, Z_n) \}$ if and only if $\sigma_{g(Z_1, \ldots, Z_n), L^{-1}} \geq 0$. This of course means that as $g(Z_1, \ldots, Z_n)$ takes larger values, $L^{-1}$ tends to take larger ones as well. Informally, this indicates that $L$ tends to take \textit{smaller} values as $g(Z_1, \ldots, Z_n)$ takes larger ones. Hence, at least in a linear sense, the joint mass function or density tends to place smaller amounts of probability or density on the $(z_1, \ldots, z_n) \in \mathcal{S}^n$ that produce large values of $Y$. In other words, if the joint density or mass function displays any behavior akin to the concentration of measure and the marginal distributions behave in the manner previously stated, one can expect $\text{E}Y \leq \text{Av}\{ g(Z_1, \ldots, Z_n) \}$ to hold as a condition. Recall that a probability measure is said to concentrate if it places most of its measurement on a subset of values in the support that neighbor the expected value of the random variable. If the values in an arbitrarily small neighborhood of the expected value continue to accumulate measure with sample size, the random variable converges in probability to a constant. 

Vitally, this suggests that A5 holds when---together with the aformenetioned informal restrictions on the marginal distributions---A3 is true (1) and it is \textit{also} the case that $L$ exists (2) and $\mathcal{S}^n$ remains rectangular for arbitrarily large (but finite) samples (3). Consider the first property again: $\mu_n =o(n)$. When the variances of the $\epsilon_i$ are finite, $\mu_n =o(n)$ implies that $\mathbf{w}_n \boldsymbol{\epsilon}$ converges in probability to zero. Hence, it will be the case that $\sum_{i=1}^n w_i \epsilon_i = g(\epsilon_1, \ldots, \epsilon_n) \approx 0$ for sufficiently sized $n$ with high probability, which implies that the $(e_1, \ldots, e_n) \in \mathcal{S}^n$ that distance $\mathbf{w}_n \boldsymbol{\epsilon}$ from zero in higher magnitudes will be afforded smaller density, at least overall in some sense, as $n$ gets larger. This, in turn, will afford more of a likelihood that $\text{E} (\text{exp} (s \mathbf{w} \boldsymbol{\epsilon})) $ is bounded by the target value. Again, the second property (and bounded supports) allow for Eq. (2) and the referenced objects to exist. The third property is important to explore in more detail. Essentially, it states that the mutual dependence between the $\epsilon_i$ does not remove all density from some $(e_1, \ldots, e_n) \in \mathcal{S}_*^n$, even as $n$ gets large. Importantly, this does \textit{not} mean that each $(e_1, \ldots, e_n) \in \mathcal{S}_*^n$ needs to be afforded a non-negligible density for all sample sizes. The density can become arbitrarily small. It simply needs to be non-zero for all $n$ considered. If this holds, then $\mathcal{S}^n = \mathcal{S}_*^n$ and thus $\text{Av}( \text{exp} \{ s \mathbf{w} \boldsymbol{\epsilon}\}) = \text{Av}_*( \text{exp} \{ s \mathbf{w} \boldsymbol{\epsilon}\})$. Importantly, we do not need the last two of these properties to hold in the true limit. We only need them to hold for sufficiently sized, but ultimately finite $n$.

For clarity, we explore yet another characterization. Define a function $\eta(z) = f(z) - h(z)$, where $f(z)$ and $h(z)$ are two densities defined on the same support WLOG. Then it is apparent that $\int_{\mathcal{S}} \eta(z) dz = 0$. Now rearrange and multiply by $z$: $z \cdot f(z) = z \cdot h(z) + z \cdot \eta(z)$. Integrating across, we arrive to $\text{E}Z = \text{E}_h Z + \int_{\mathcal{S}} z  \cdot \eta(z) dz$, where $\text{E}_h$ indicates an expectation taken w.r.t. $h(z)$. Another set of identities results when $h(z) = R^{-1}$, the uniform density. Then the above becomes $\text{E}Z = \text{Av}(Z) + \int_{\mathcal{S}} z  \cdot \eta(z) dz$ and it follows that $\int_{\mathcal{S}} z  \cdot \eta(z) dz = - R^{-1} \sigma_{Z, f^{-1}(Z)}$. Just the same, $\int_{\mathcal{S}^n} g(z_1, \ldots, z_n) \cdot \eta(z_1, \ldots, z_n) dz_1 \cdots dz_n = - R_{\mathbf{z}}^{-1} \sigma_{g(Z_1, \ldots, Z_n), L^{-1}}$. Therefore, $\mathcal{U}$ status is directly related to $\eta(\mathbf{z})$ and $g(\mathbf{z})$ being orthogonal. In the supplementary materials, it is also proven that $\eta(Z) \in \mathcal{U}$ is a sufficient condition for $Z \in \mathcal{U}$ when $h(z) = R^{-1}$. Although this condition seems abstract, it simply means that $\text{E} \{\eta(Z) \} = 0$, i.e., that the expected value of the 'approximating' density matches the expected value of the true one. If $\text{E}L \to R_{\mathbf{z}}^{-1}$ as $n$ grows, this is also sufficient for $\text{E} \{ g(Z_1, \ldots, Z_n) \} \to \text{Av} \{ g(Z_1, \ldots, Z_n) \}$ and hence A5 when $\mathcal{S}_*^n = \mathcal{S}^n$.

\subsection{Main Results}
Before using Lemma 1 to derive a sharper version of Hoeffding's inequality, we first extend classical results. A sub-$\mathcal{U}$ random variable is one such that $\text{Av}(Z) \leq \text{E}Z$.
\begin{lemma}[Extension of Hoeffding's lemma]
Let $Z$ be a random variable defined on bounded support $\mathcal{S}$ with minimum $m$ and maximum $M$. Then for any $s>0$, $Av \left ( exp \{ sZ \} \right ) \leq exp \{s Av (Z) + 8^{-1} s^2 R^2 \}$. If $\text{E}Z = 0$ and $Z$ is sub-$\mathcal{U}$, $Av \left ( exp \{ sZ \} \right ) \leq exp \{8^{-1} s^2 R^2 \}$. 
\end{lemma}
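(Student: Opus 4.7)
The plan is to recognize $\text{Av}(\cdot)$ as the expectation operator under the uniform reference measure on $\mathcal{S}$, which reduces the claim to the classical Hoeffding lemma. Let $U$ denote a random variable whose law is uniform on $\mathcal{S}$ (with respect to Lebesgue measure in the continuous case and counting measure in the discrete case). Then for every measurable $\phi$, $\text{Av}(\phi(Z)) = \text{E}[\phi(U)]$; in particular, $\text{Av}(Z) = \text{E}U$, and $U$ is supported on $[m, M]$. Applying the classical Hoeffding lemma to the centered variable $U - \text{E}U$ yields $\text{E}[\exp\{s(U - \text{E}U)\}] \leq \exp\{8^{-1} s^2 R^2\}$, where $R = M - m$ coincides exactly with the functional-average definition of $R$ in the regular continuous case (and with a harmless slackening in the regular discrete case, where $R = M - m + 1$). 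Multiplying through by $\exp\{s \text{Av}(Z)\}$ and rewriting in terms of $\text{Av}$ delivers the first inequality.

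A self-contained alternative is to appeal directly to the convexity of $z \mapsto e^{sz}$ on $[m, M]$, which gives the pointwise bound $e^{sz} \leq \frac{M - z}{M - m} e^{sm} + \frac{z - m}{M - m} e^{sM}$ for every $z \in \mathcal{S}$. Integrating against the uniform reference measure and dividing by $R$ produces an upper bound of the form $\exp\{s \text{Av}(Z)\} \cdot \exp\{L(h)\}$, where $L(h) = -hp + \log(1 - p + p e^h)$ with $p = (\text{Av}(Z) - m)/(M - m)$ and $h = s(M - m)$. The standard textbook computation (using $L(0) = L'(0) = 0$ and $L''(h) \leq 1/4$) then gives $L(h) \leq h^2/8$, closing the first claim.

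The second claim is an immediate consequence. Under the hypotheses $\text{E}Z = 0$ and $\text{Av}(Z) \leq \text{E}Z$ (the sub-$\mathcal{U}$ condition), one obtains $\text{Av}(Z) \leq 0$. Since $s > 0$, this forces $\exp\{s \text{Av}(Z)\} \leq 1$, collapsing the first bound into $\text{Av}(\exp\{sZ\}) \leq \exp\{8^{-1} s^2 R^2\}$.

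No genuine obstacle arises here: the statement is essentially the classical Hoeffding lemma transported into the functional-average language of Section 4.1. The only care needed is the identification $R = M - m$, which is transparent under the regularity of $\mathcal{S}$ that is implicitly in force throughout the paper; if one wishes to allow $\mathcal{S}$ to be a proper subset of $[m, M]$, the argument still works but $R^2$ in the exponent must be replaced by $(M - m)^2$.
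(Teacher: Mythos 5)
Your proposal is correct and, in its self-contained second paragraph, follows essentially the same route as the paper: the convexity bound $e^{sz} \leq R^{-1}\{(M-z)e^{sm} + (z-m)e^{sM}\}$, the standard $g(0)=g'(0)=0$, $g''\leq 1/4$ Taylor computation, and then $\text{Av}(Z)\leq \text{E}Z = 0$ with $s>0$ to collapse the exponent for the sub-$\mathcal{U}$ case. The opening observation that $\text{Av}$ is expectation under the uniform reference law, so the result is the classical Hoeffding lemma in disguise, is a tidy way to package the same argument rather than a genuinely different one.
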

\begin{proof}
This proof follows the logic of the proof for Hoeffding's lemma since $Av(\cdot)$ is a linear operator with the monotonic property over inequalities.

Since $exp \{s z \}$ is a convex function of $z$, for all $z \in \mathcal{S}$:
$$exp \{s z \} \leq R^{-1} \left ( (M - z) exp \{ s m \} + (z - m) exp \{ s M \} \right )$$
Then, denoting $Av(Z) = Av$:
\begin{align*}
Av \left ( exp \{s Z \} \right ) \leq  R^{-1} \left ( (M - Av) exp \{ s m \} + (Av - m) exp \{ s M \} \right )
\end{align*}
Now, specify a function $g(x) =  R^{-1}m x + log \left ( (M - Av)  + (Av - m) exp \{x\} \right ) - log(R)$. Then it is easily demonstrable that $g(0) = 0, g'(0) = R^{-1} Av$ and $g''(x) \leq 4^{-1}$ for $\forall x$. Hence, utilizing a Taylor expansion around $0$, since $g(x) = g(0) + x g'(0) + 2^{-1}x^2 g''(x_*)$ for some $x_*$ that is between $0$ and $x$, it is true that $g(x) \leq R^{-1} x Av + 8^{-1} x^2$. This implies that $g(sR) \leq s Av + 8^{-1}s^2 R^2$. Hence, $Av \left ( exp \{s Z \} \right ) \leq exp \{ s Av + 8^{-1}s^2 R^2 \}$.

Now, since $s > 0$, when $\text{E}Z = 0$ and $Z$ is sub-$\mathcal{U}$, it is true that $Av(Z) \leq 0$. Therefore, $Av \left ( exp \{s Z \} \right ) \leq exp \{ 8^{-1}s^2 R^2 \}$.
\end{proof}
\begin{theorem}[\textbf{Extension of Hoeffding's Inequality}]
Let $\epsilon_1, \ldots, \epsilon_n$ be defined on bounded supports $\mathcal{S}_{1}, \ldots, \mathcal{S}_{n}$ such that for an arbitrary $i \in \{1, \ldots, n \}$, $m_i \leq \epsilon_i \leq M_i$ with probability one and $\text{E}\epsilon_i = 0$. Let $\mathbf{w}$ be a vector of constants s.t. $S_n = \mathbf{w} \boldsymbol{\epsilon} = \sum_{i=1}^n w_i \epsilon_i$. Suppose A5 for some fixed $n$ and let $\tau > 0$ be arbitrary. Then $\text{Av}(\epsilon_i) \leq 0$ for $\forall i$ and $\mathbf{w} > \boldsymbol{0}$ $\implies \text{Pr}(|S_n| > \tau) \leq  2 exp \{ - (\sum_{i=1}^n w_i^2 R^2_i)^{-1} 2 \tau^2 \}$.

Furthermore, if there exists some $N \in \mathbb{N}$ such that for $\forall n > N$ it is true that the above conditions hold, then the stated inequality is true for all $n > N$ provided all objects exist.
\end{theorem}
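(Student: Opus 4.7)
The approach is a Chernoff-style argument that uses Assumption A5 to swap a (possibly dense-dependent) expectation for a product-form functional average, after which the extension of Hoeffding's lemma (Lemma 3) handles each coordinate independently. The key observation is that once the moment generating function of $S_n$ is dominated by $\mathrm{Av}_*(\exp\{s\mathbf{w}\boldsymbol{\epsilon}\})$, the dependence structure is effectively washed out, because the average over $\mathcal{S}_*^n = \mathcal{S}_1 \times \cdots \times \mathcal{S}_n$ factorizes coordinate-by-coordinate by Fubini.

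The plan is as follows. First, for arbitrary $s > 0$, apply Markov's inequality to $\exp\{s S_n\}$ to obtain $\mathrm{Pr}(S_n > \tau) \le \exp\{-s\tau\}\, \mathrm{E}\exp\{s S_n\}$. Next, invoke A5 to replace $\mathrm{E}\exp\{s S_n\}$ with $\mathrm{Av}_*(\exp\{s\mathbf{w}\boldsymbol{\epsilon}\})$. Then, since the integrand factorizes as $\prod_{i=1}^n \exp\{s w_i \epsilon_i\}$ and the domain is a Cartesian product of the $\mathcal{S}_i$, Fubini's theorem gives
\begin{equation*}
\mathrm{Av}_*(\exp\{s\mathbf{w}\boldsymbol{\epsilon}\}) = \prod_{i=1}^n \mathrm{Av}(\exp\{s w_i \epsilon_i\}).
\end{equation*}
For each factor, I would apply Lemma 3 to the random variable $Z_i = w_i \epsilon_i$, which is supported on an interval of length $w_i R_i$ (since $w_i > 0$) and satisfies $\mathrm{Av}(Z_i) = w_i \mathrm{Av}(\epsilon_i) \le 0$. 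The sub-$\mathcal{U}$ clause of Lemma 3 (which requires $\mathrm{E}Z_i = 0$ and $\mathrm{Av}(Z_i) \le 0$) then yields $\mathrm{Av}(\exp\{s w_i \epsilon_i\}) \le \exp\{8^{-1} s^2 w_i^2 R_i^2\}$. Multiplying and substituting gives $\mathrm{Pr}(S_n > \tau) \le \exp\{-s\tau + 8^{-1} s^2 \sum_i w_i^2 R_i^2\}$, which I would minimize at $s^* = 4\tau / \sum_i w_i^2 R_i^2$ to obtain the one-sided bound $\exp\{-2\tau^2/\sum_i w_i^2 R_i^2\}$.

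For the lower tail $\mathrm{Pr}(S_n < -\tau) = \mathrm{Pr}(-S_n > \tau)$, I would repeat the Chernoff step with $\exp\{-s S_n\}$ and use the second half of A5, namely $\mathrm{E}\exp\{-s \mathbf{w}\boldsymbol{\epsilon}\} \le \mathrm{Av}_*(\exp\{s\mathbf{w}\boldsymbol{\epsilon}\})$, so the bound from the upper-tail calculation carries over verbatim. A union bound then produces the factor of $2$ and the claimed inequality. The final sentence of the theorem, asserting the bound for all $n > N$, is immediate: the above argument uses only the conditions for a fixed $n$, so if they hold for every $n > N$ and all referenced objects exist, the conclusion holds uniformly.

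The step that carries the novelty is the transition from $\mathrm{E}\exp\{sS_n\}$ to $\mathrm{Av}_*(\exp\{s\mathbf{w}\boldsymbol{\epsilon}\})$ via A5; without it, dense dependence would prevent the mgf from factorizing at all. The rest of the argument is essentially the classical Chernoff/Hoeffding recipe, but adapted to the $\mathrm{Av}(\cdot)$ operator. The only mild technical point worth verifying is that the hypothesis $\mathrm{Av}(\epsilon_i) \le 0$ combined with $w_i > 0$ legitimately lets me absorb the linear term in Lemma 3, and that $\mathrm{Av}$ commutes with positive scaling so that the range entering the bound is $w_i R_i$ rather than $R_i$ alone. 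Both are immediate from the definition of $\mathrm{Av}$ and the linearity/monotonicity properties already noted in the paper.
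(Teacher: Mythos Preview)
Your proposal is correct and follows essentially the same route as the paper's proof: Markov/Chernoff, then A5 to pass to $\mathrm{Av}_*$, Fubini to factorize, Lemma~3 on each coordinate (using $w_i>0$ and $\mathrm{Av}(\epsilon_i)\le 0$ to kill the linear term), optimization at $s=4\tau/\sum_i w_i^2 R_i^2$, the other tail via the second branch of A5, and a union bound. The only cosmetic difference is that the paper applies Lemma~3 to $\epsilon_i$ with exponent parameter $sw_i>0$, whereas you apply it to $Z_i=w_i\epsilon_i$ with parameter $s$; these are equivalent and yield the same factor $\exp\{8^{-1}s^2 w_i^2 R_i^2\}$.
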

\begin{proof}
Again, note that for any $s > 0$, all integrals exist---including those of A5---since each $\epsilon_i$ is a bounded random variable. This can easily be shown with an extended application of Fubini's theorem. Let $\tau$ and $s$ be arbitrary real numbers such that $\tau > 0$ and $s > 0$. We will proceed with the statement $Pr ( S_n > \tau)$. Then by Markov's inequality and A5:
$$\text{Pr} \left ( \text{exp} \{ s S_n \} > \text{exp} \{ s \tau \} \right ) \leq \text{E} \left ( \text{exp} \{ s S_n \} \right ) \text{exp} \{- s \tau \} \leq \text{Av}_* \left ( \text{exp} \{s (\sum_{i=1}^n w_i \epsilon_i) \} \right )  \text{exp} \{- s \tau \}$$
However, $\text{Av}_* \left ( \text{exp} \{s (\sum_{i=1}^n w_i \epsilon_i) \} \right )  \text{exp} \{- s \tau \} = \prod_{i=1}^n \text{Av}\left ( \text{exp} \{s w_i \epsilon_i \} \right ) \text{exp} \{- s \tau \}$. By Lemma 3, then, since each $s w_i >0$ and our premise asserts that $\epsilon_i$ is sub-$\mathcal{U}$ for $\forall i$:
$$\prod_{i=1}^n \text{Av}\left ( \text{exp} \{s w_i \epsilon_i \} \right ) \text{exp} \{- s \tau \} \leq \prod_{i=1}^n \text{exp} \{ 8^{-1}s^2 w_i^2 R_i^2 \}  \text{exp} \{- s \tau \} =  \text{exp} \{ 8^{-1}s^2 \sum_{i=1}^n w_i^2 R_i^2 - s \tau \} $$
From here, we proceed by finding the minimum of $\text{exp} \{ 8^{-1}s^2 \sum_{i=1}^n w_i^2 R_i^2 - s \tau \}$ for $s \in \mathbb{R}^+$. It is easy to verify that this function is minimized at $s = (\sum_{i=1}^n w_i^2 R^2_i)^{-1} 4 \tau$. Thus, $\text{Pr}( S_n > \tau) \leq \text{exp} \{ - (\sum_{i=1}^n w_i^2 R^2_i)^{-1} 2 \tau^2 \}$.

This proves one direction of the main statement. For the other, one is required to proceed in an analogous fashion for $\text{Pr}(-S_n > \tau)$. One simply uses A5 and Lemma 3 again. For this reason, the steps are omitted. Hence $\text{Pr}(S_n > \tau) + \text{Pr}(S_n < -\tau)   \leq 2 exp \{ - (\sum_{i=1}^n w_i^2 R^2_i)^{-1} 2 \tau^2 \} \implies \text{Pr}(|S_n| > \epsilon)  \leq  2 exp \{ - (\sum_{i=1}^n w_i^2 R^2_i)^{-1} 2 \tau^2 \}$.

The asymptotic statement of the theorem can be achieved by setting some $N \in \mathbb{N}$ such that A5 holds for all natural numbers greater than $N$ and then letting $n \in \mathbb{N}$ be arbitrary such that $n > N$. Then the proofs proceed exactly as before.
\end{proof}
This theorem is very useful for the construction of valid confidence intervals under almost arbitrary conditions of probabilistic dependence, and without any need to specify or completely understand the latent system of dependencies, or even many features of the marginal or joint probability distributions. Again, $\mu_D$ could be $O(n)$. All that is required in terms of dependence is A5.
\begin{example}
Say $\{ Y_i \}_{i \in \zeta}$ is an identically distributed sample of sub-$\mathcal{U}$ random variable s.t. $\text{E}Y_i = \mu$. Also say $m, M$ are the minimum and maximum of the associated support, respectively. Furthermore, say $\mu_D = n-1$, but A5 is fulfilled. Then $[\bar{Y} - R \cdot \sqrt{(2n)^{-1}\text{log}(2/\alpha)}, \bar{Y} + R \cdot \sqrt{(2n)^{-1}\text{log}(2/\alpha)}]$ is at least a $1-\alpha$ confidence set for $\mu$ and $\alpha \in (0,1)$. If $m \geq 0$ and $m,M$ are unknown, then $\{ \mu \hspace*{1mm} | \hspace*{1mm} \left ( 1+\sqrt{2 n^{-1}log(\alpha^{-1}2)} \right )^{-1} \bar{Y} \leq \mu \leq \left ( 1-\sqrt{2n^{-1}log(\alpha^{-1}2)} \right )^{-1} \bar{Y}  \}$ is also at least a $1-\alpha$ confidence set when $n > 2 log(\alpha^{-1}2)$.
\end{example}
This next lemma allows us to extend Bernstein's inequality, which can sometimes provide much sharper confidence sets. It is also an adaptation of a classic result.
\begin{lemma}
Let $Z$ be a random variable such that $|Z| \leq M$ almost surely, $\text{E}Z=0$, and $Z$ is sub-$\mathcal{U}$. Then for any $0 < s$, $\text{Av}(\text{exp} \{sZ \}) \leq \text{exp} \{ M^{-2} \text{Av}(Z^2) (exp \{sM\} -1 - sM) \}$.
\end{lemma}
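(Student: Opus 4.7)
The plan is to mirror the classical derivation of Bennett's inequality precursor, replacing expectation with $\text{Av}(\cdot)$, which the paper has repeatedly exploited by noting that $\text{Av}$ is linear and preserves inequalities between integrands on $\mathcal{S}$. The whole strategy reduces to constructing a pointwise quadratic upper bound for $\exp(sz)$ on $[-M, M]$ and then averaging.

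First I would define $\phi(z) = \exp(sz) - 1 - sz$ and argue that the map $z \mapsto \phi(z)/z^2$ (extended continuously by $s^2/2$ at $z=0$) is non-decreasing on $\mathbb{R}$ whenever $s > 0$. The cleanest route is to differentiate, reduce to checking that $k(u) = \exp(u)(u-1) + 1 \geq 0$ for all $u \in \mathbb{R}$ with $u = sz$, and to verify this last fact by noting $k(0)=0$ and $k'(u) = u \exp(u)$ has the appropriate sign. Monotonicity then gives, for all $z$ with $|z| \leq M$,
\begin{equation*}
\exp(sz) \;\leq\; 1 + sz + \frac{z^2}{M^2}\bigl(\exp(sM) - 1 - sM\bigr).
\end{equation*}

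Next I would apply $\text{Av}(\cdot)$ termwise to this inequality, invoking its linearity and the monotonicity property the paper uses in the proof of Lemma 3. This yields
\begin{equation*}
\text{Av}(\exp\{sZ\}) \;\leq\; 1 + s\,\text{Av}(Z) + \frac{\text{Av}(Z^2)}{M^2}\bigl(\exp(sM) - 1 - sM\bigr).
\end{equation*}
The sub-$\mathcal{U}$ hypothesis combined with $\text{E}Z = 0$ gives $\text{Av}(Z) \leq \text{E}Z = 0$, and since $s > 0$ we may drop the $s\,\text{Av}(Z)$ term. Finishing is then immediate via $1 + x \leq \exp(x)$ applied to $x = M^{-2}\text{Av}(Z^2)(\exp(sM) - 1 - sM) \geq 0$.

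I expect the main obstacle to be the monotonicity claim for $\phi(z)/z^2$, which, although standard in the expectation-based proof of Bennett's lemma, deserves a careful verification here because we need it pointwise on $[-M, M]$ rather than only in an averaged sense; the sub-$\mathcal{U}$ step itself is straightforward but must be invoked with the correct sign of $s$, which is why the statement restricts to $s > 0$.
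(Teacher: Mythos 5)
Your proof is correct and reaches the paper's conclusion through the same overall skeleton: establish the pointwise bound $\exp(sz) \leq 1 + sz + M^{-2}z^{2}(\exp(sM)-1-sM)$ on $[-M,M]$, apply $\text{Av}(\cdot)$, drop the linear term via $\text{Av}(Z) \leq \text{E}Z = 0$ (which is where sub-$\mathcal{U}$ and $s>0$ enter), and finish with $1+x\leq \exp(x)$. The only genuine difference is the mechanism for the pointwise bound: the paper expands $\exp(sZ)$ as a Taylor series and bounds each term via $|Z|^{k-2}\leq M^{k-2}$ for $k\geq 2$ (the Bernstein-style route), whereas you invoke the monotonicity of $z\mapsto(\exp(sz)-1-sz)/z^{2}$ (the Bennett-style route); your verification of that monotonicity through $k(u)=\exp(u)(u-1)+1\geq 0$ is sound, since $k(0)=0$ and $k'(u)=u\exp(u)$ gives a global minimum at the origin. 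Both derivations yield the identical quadratic majorant here, so neither buys a sharper constant; the paper's series argument is slightly more self-contained, while yours avoids manipulating the infinite sum at the cost of a small calculus lemma.
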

\begin{proof}
$$\text{exp} \{s Z \} = 1 + sZ + \sum^{\infty}_{k=2} (k!)^{-1} s^k Z^k \leq 1 + sZ + \sum^{\infty}_{k=2} (k!)^{-1} s^k |Z|^k = 1 + sZ + \sum^{\infty}_{k=2} (k!)^{-1} s^k Z^2 |Z|^{k-2}$$
However, the last expression is less than or equal to:
$$1 + sZ + \sum^{\infty}_{k=2} (k!)^{-1} s^k Z^2 M^{k-2} = 1 + sZ + M^{-2}  Z^2 \sum^{\infty}_{k=2} (k!)^{-1} s^k M^{k} = 1 + sZ + M^{-2}  Z^2 (exp \{sM \} - 1 -sM)$$
Hence, $\text{Av}(\text{exp} \{sZ \}) \leq 1 + M^{-2}\text{Av}(Z^2) (exp \{sM\} -1 - sM) \leq \text{exp} \{ M^{-2} \text{Av}(Z^2) (exp \{sM\} -1 - sM) \}$. The first of the previous inequalities follows from the fact that $\text{Av}(Z) \leq 0$. The second follows from the fact that $1+x \leq exp \{x\}$ for $\forall x \in \mathbb{R}$.
\end{proof}
\begin{theorem}
Let $\epsilon_1, \ldots, \epsilon_n$ be defined on bounded supports $\mathcal{S}_{1}, \ldots, \mathcal{S}_{n}$ and $\mathbf{w}$ be a vector of positive constants s.t. $S_n = \mathbf{w} \boldsymbol{\epsilon} = w \cdot \sum_{i=1}^n \epsilon_i$ for $w >0$. Moreover, for an arbitrary $i \in \{1, \ldots, n \}$, say $|\epsilon_i| \leq M$ with probability one and $\text{E}\epsilon_i = 0$. Suppose A5 for some fixed $n$ and let $\tau > 0$ be arbitrary. Finally, define a function $h(u) = log(u + 1)(u + 1) - u$. Then $\text{Pr}(|S_n| > \tau) \leq  2 \text{exp} \left ( - M^{-2} \sum_{i=1}^n \text{Av}(\epsilon_i^2)\cdot  h\left ( \{ w \cdot \sum_{i=1}^n \text{Av}(\epsilon_i^2) \}^{-1} \tau M \right ) \right )$ when $\epsilon_i$ is sub-$\mathcal{U}$ for $\forall i$.
\end{theorem}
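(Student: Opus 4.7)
The plan is to mimic the classical Chernoff--Bernstein template, but replace the independence-based factorization of the moment generating function with assumption A5 and replace the usual Bernstein-type moment bound with Lemma 4. First, I fix an arbitrary $s > 0$ and apply Markov's inequality to $\exp\{sS_n\}$ to obtain $\Pr(S_n > \tau) \leq \exp\{-s\tau\}\,\mathrm{E}(\exp\{sS_n\})$. Then A5 gives $\mathrm{E}(\exp\{sS_n\}) \leq \text{Av}_*(\exp\{sS_n\})$, and the rectangular Cartesian-product structure of $\text{Av}_*$ factorizes this as $\prod_{i=1}^n \text{Av}(\exp\{sw\epsilon_i\})$, converting the problem into a product of one-dimensional functional averages.

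Next I apply Lemma 4 to each factor with parameter $sw > 0$, using $|\epsilon_i| \leq M$, $\mathrm{E}\epsilon_i = 0$, and the sub-$\mathcal{U}$ hypothesis:
\[
\text{Av}(\exp\{sw\epsilon_i\}) \leq \exp\!\Big\{ M^{-2}\,\text{Av}(\epsilon_i^2)\,\big(\exp\{swM\} - 1 - swM\big)\Big\}.
\]
Writing $A = \sum_{i=1}^n \text{Av}(\epsilon_i^2)$, the product together with $\exp\{-s\tau\}$ yields a bound of the form $\exp\{M^{-2} A(\exp\{swM\} - 1 - swM) - s\tau\}$. I then minimize over $s>0$: differentiating in $s$ and setting the derivative to zero produces $s^\ast = (wM)^{-1}\log(1+u)$ with $u = \tau M/(Aw)$, which is strictly positive so the Lemma 4 restriction is respected. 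Substituting $s^\ast$ back and gathering terms reorganizes the exponent into $-M^{-2} A\,\big((1+u)\log(1+u) - u\big) = -M^{-2} A\,h(u)$.

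For the lower tail, I run exactly the same chain starting from $\Pr(-S_n > \tau) \leq \exp\{-s\tau\}\,\mathrm{E}(\exp\{-sS_n\})$. The key observation is that A5 is stated symmetrically: it bounds $\mathrm{E}(\exp\{-sS_n\})$ by the same quantity $\text{Av}_*(\exp\{sS_n\})$ used for the upper tail. This neatly avoids any question about whether $-\epsilon_i$ inherits the sub-$\mathcal{U}$ property, because Lemma 4 is invoked only on $\epsilon_i$ itself and only at positive arguments $sw$. A union bound over $\{S_n > \tau\} \cup \{-S_n > \tau\}$ then produces the overall factor of $2$.

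The main difficulty is the algebraic collapse in the optimization step: several $\log(1+u)$ and $u$ terms must recombine exactly into $h(u) = (1+u)\log(1+u) - u$, and a stray sign or factor is easy to introduce. A secondary care is confirming that all averages and integrals are well defined; this is immediate since each $\epsilon_i$ lives on a bounded support, so every integrand is continuous on a compact set and Fubini-type interchanges of products and averages are unproblematic.
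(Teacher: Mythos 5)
Your proposal is correct and follows essentially the same route as the paper's own (sketched) proof: Markov's inequality, A5 to pass to $\text{Av}_*$, factorization over the product support, Lemma 4 on each factor, and optimization at $s^\ast = (wM)^{-1}\log(1+u)$, with the two-sided clause of A5 handling the lower tail. You have simply filled in the algebraic details the paper omits, and your observation that A5's symmetric statement sidesteps any sub-$\mathcal{U}$ question for $-\epsilon_i$ is exactly the intended reading of the paper's ``parallel logic'' remark.
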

\begin{proof}
The proof is almost exactly to Theorem 1. Thus, we provide only a sketch. Let $\tau > 0$ and $s > 0$ be arbitrary. Then $\text{Pr} \left ( \text{exp} \{ s S_n \} > \text{exp} \{ s \tau \} \right ) \leq \prod_{i=1}^n  \text{exp} \{ M^{-2} \text{Av}(\epsilon_i^2) \cdot (\text{exp} \{swM\} -1 - swM) 
\}  \text{exp} \{- s \tau \} = \text{exp} \{ M^{-2} \sum_{i=1}^n \text{Av}(\epsilon_i^2) \cdot (exp \{swM\} -1 - swM) -s\tau \}$.
These statements follow from from A5 and Lemma 4. Now, we proceed by minimizing the last expression with respect to $s$. It is easy to observe that the minimum is achieved at $s = (wM)^{-1} log \left ( \{ w \cdot \sum_{i=1}^n \text{Av}(\epsilon_i^2) \}^{-1} \tau M + 1 \right )$. Algebraic rearrangement achieves one side of the bound. Parallel logic, as in Theorem 1, achieves the other.
\end{proof}
\begin{corollary}{(\textbf{Extension of Bernstein's Inequality}.)}
Suppose the same setup as Theorem 2. Then for an arbitrary $\tau > 0$:
$$\text{Pr}(|S_n| > \tau) \leq  2 \text{exp} \left ( - \{ 2w^2 \sum_{i=1}^n \text{Av}(\epsilon_i^2) + 3^{-1} 2 w \tau M \}^{-1} \tau^2 \right )$$
\end{corollary}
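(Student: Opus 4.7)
The plan is to reduce the corollary to Theorem 2 by means of the standard elementary inequality
\[
h(u) = (1+u)\log(1+u) - u \;\geq\; \frac{u^2}{2 + \tfrac{2}{3}u} \qquad \text{for } u \geq 0,
\]
which is the usual bridge between the ``log-form'' and the ``rational-form'' of Bernstein's bound. First I would record this inequality as a preliminary fact; it can be verified either by writing $h(u) - u^2/(2+2u/3) \geq 0$ and differentiating twice (noting equality at $u=0$ together with nonnegative derivatives), or by comparing power series term by term.

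Next, I would invoke Theorem 2 directly: under the stated hypotheses,
\[
\text{Pr}(|S_n| > \tau) \;\leq\; 2\,\text{exp}\!\left(-M^{-2} V \cdot h(u)\right),
\]
where for brevity I set $V = \sum_{i=1}^n \text{Av}(\epsilon_i^2)$ and $u = \tau M/(wV)$. Since $\tau,w,M,V > 0$, we have $u \geq 0$, so the preliminary inequality applies. Substituting $u = \tau M/(wV)$ into the rational lower bound for $h(u)$ and simplifying yields
\[
M^{-2} V \cdot h(u) \;\geq\; M^{-2} V \cdot \frac{\tau^2 M^2/(w^2 V^2)}{2 + \tfrac{2}{3}\,\tau M/(wV)} \;=\; \frac{\tau^2}{2 w^2 V + \tfrac{2}{3} w \tau M},
\]
so that exponentiating the negative of both sides and multiplying by $2$ gives precisely the claimed bound.

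The only genuinely nontrivial step is the scalar inequality for $h$, and even that is a routine exercise in single-variable calculus; the rest is algebraic rearrangement. Because the result is essentially a corollary-by-substitution, I would keep the written proof very short: state the lemma on $h$, plug into Theorem 2, and simplify.
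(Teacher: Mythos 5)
Your proposal is correct and follows exactly the paper's route: invoke Theorem 2, apply the standard bound $h(u) \geq u^2/(2 + \tfrac{2}{3}u)$ for $u \geq 0$, and rearrange algebraically. The substitution $u = \tau M/(wV)$ and the resulting simplification check out, so nothing further is needed.
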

\begin{proof}
We simply use Theorem 2 and then note that $h(u) \geq (2 + 3^{-1}2 u)^{-1} u^2$ for $u \geq 0$. Algebraic rearrangement supplies the result.
\end{proof}
To use Theorem 2 and Corollary 3 for confidence sets, one only needs to note that $\text{Av}(\epsilon_i^2)\leq 3^{-1} 2 R_i^2$ for sub-$\mathcal{U}$ variables of the form $\epsilon_i = Y_i - \text{E}Y_i$, where $R_i$ here is the range of $Y_i$. One can then substitute these values into the previous results and use additional substitutions depending on the known features of the conditional $Y_i$ or marginal $Y$. For $\mathcal{U}$ random variables, we already know that $\text{Av}(\epsilon_i^2) = 12^{-1} R_i^2$.
\begin{theorem}
Let $\epsilon_1, \ldots, \epsilon_n$ be continuous, regular $\mathcal{U}$ random variables defined on supports $\mathcal{S}_{1}, \ldots, \mathcal{S}_{n}$ such that for an arbitrary $i \in \{1, \ldots, n \}$, $\text{max}(\mathcal{S}_i) = M_i$ and $\text{E}\epsilon_i = 0$. Let $\mathbf{w}$ be a vector of constants s.t. $S_n = \mathbf{w} \boldsymbol{\epsilon} = \sum_{i=1}^n w_i \epsilon_i$. Suppose A5 for some fixed $n$ and let $\tau > 0$ be arbitrary. Then $\text{Pr}(|S_n| > \tau) \leq  2 exp \{ - (\sum_{i=1}^n w_i^2 R^2_i)^{-1} 6 \tau^2 \}$.
\end{theorem}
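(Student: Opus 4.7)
The plan is to mimic the proof of Theorem 1, but substitute the sharper bound from Lemma 1 in place of the extension of Hoeffding's lemma (Lemma 3), which will yield a constant of $24^{-1}$ rather than $8^{-1}$ in the moment-generating-function bound and thus improve the exponent in the final tail inequality by a factor of three.

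Concretely, I would first fix arbitrary constants $\tau > 0$ and $s > 0$. By Markov's inequality applied to the non-negative random variable $\exp\{s S_n\}$, we have
\[
\text{Pr}(S_n > \tau) \;\leq\; \text{E}\bigl(\exp\{s S_n\}\bigr)\,\exp\{-s\tau\}.
\]
Invoking A5 for the exponent $s \mathbf{w}\boldsymbol{\epsilon}$ bounds the right-hand side by $\text{Av}_*\bigl(\exp\{s\sum_{i=1}^n w_i \epsilon_i\}\bigr)\exp\{-s\tau\}$, and since $\text{Av}_*$ is taken over the rectangular product $\mathcal{S}_1 \times \cdots \times \mathcal{S}_n$ it factors as $\prod_{i=1}^n \text{Av}\bigl(\exp\{s w_i \epsilon_i\}\bigr)\exp\{-s\tau\}$. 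Each $\epsilon_i$ is continuous, regular, and $\mathcal{U}$ with mean zero, so Lemma 1 applies directly to give $\text{Av}\bigl(\exp\{s w_i \epsilon_i\}\bigr) \leq \exp\{24^{-1} s^2 w_i^2 R_i^2\}$, and the product collapses to $\exp\{24^{-1} s^2 \sum_{i=1}^n w_i^2 R_i^2 - s\tau\}$.

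Next I would optimize over $s > 0$. Letting $A = \sum_{i=1}^n w_i^2 R_i^2$, differentiation of $24^{-1} s^2 A - s\tau$ gives the minimizer $s^* = 12\tau/A$, and substitution yields $-6\tau^2/A$ in the exponent, establishing $\text{Pr}(S_n > \tau) \leq \exp\{-6\tau^2/A\}$. The lower-tail inequality $\text{Pr}(-S_n > \tau) \leq \exp\{-6\tau^2/A\}$ follows by the same chain of steps with $\mathbf{w}\boldsymbol{\epsilon}$ replaced by $-\mathbf{w}\boldsymbol{\epsilon}$: A5 supplies the same upper bound $\text{Av}_*(\exp\{s\mathbf{w}\boldsymbol{\epsilon}\})$, and because the expansion in the proof of Lemma 1 only has even-indexed terms in $w_i$, the bound $\exp\{24^{-1}s^2 w_i^2 R_i^2\}$ is invariant under $w_i \mapsto -w_i$. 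A union bound combining the two tails produces the factor of $2$ and the stated inequality.

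There is no real obstacle: the only non-routine ingredients are Lemma 1 (already proven) and A5 (assumed). The two small points worth being careful about are the factorization of $\text{Av}_*$ over the Cartesian product (which hinges on $\mathcal{S}^n$ being rectangular, a premise embedded in the definition of $\text{Av}_*$) and the applicability of the Lemma 1 bound to the lower tail, which I resolve by noting that Lemma 1's bound depends only on $w_i^2$.
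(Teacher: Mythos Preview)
Your proposal is correct and follows essentially the same approach as the paper: mirror the proof of Theorem 1 but replace Lemma 3 with Lemma 1, which tightens $8^{-1}$ to $24^{-1}$ in the exponent and, because the resulting bound involves only $w_i^2$, removes the sign restriction on the weights. The optimization and union-bound steps you outline match the paper's argument exactly.
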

\begin{proof}
The proof will largely be omitted since it also mirrors Theorem 1. Instead of using Lemma 3, we can use Lemma 1 for $\mathcal{U}$ variables. This improves the bound on each $\text{Av}(\text{exp} \{ s w_i \epsilon_i\})$ by a factor of $3^{-1}$ within each exponentiation. It also allows for each $w_i$ to be negative. The rest of the proof follows the exact same steps. 
\end{proof}
\begin{example}
Now, say the objective is the estimation of $\boldsymbol{\beta}$ w.r.t. $\mathbf{Y} = \mathbf{x} \boldsymbol{\beta} + \boldsymbol{\epsilon}$. Suppose the mean model is well specified and $\boldsymbol{\epsilon}$ is \textit{any} vector of regular and continuous $\mathcal{U}$ random variables that are densely dependent, but A5 applies. Let $\mathbf{B}_s = \mathbf{w}_s \mathbf{Y}$ be arbitrary. Then $[\mathbf{B}_s -  \sqrt{\sum_{i=1}^n w_{s, i}^2 R_i^2} \cdot \sqrt{6^{-1}\text{log}(2/\alpha)}, \hspace*{1mm} \mathbf{B}_s + \sqrt{\sum_{i=1}^n w_{s, i}^2 R_i^2} \cdot \sqrt{6^{-1}\text{log}(2/\alpha)}]$ is at least a $1-\alpha$ confidence interval for $\beta_s$. Note that if each $Y_i$ is non-negative, since the range of each $\epsilon_i$ equals the range of each $Y_i$, $R_i \leq 2 \text{E}Y_i$ and hence one can feasibly replace $\sqrt{\sum_{i=1}^n w_{s, i}^2 R_i^2}$ with $2 \sqrt{\sum_{i=1}^n w_{s, i}^2 (\hat{\text{E}}Y_i)^2}$ when each $R_i$ is unknown. This will still be accurate asymptotically insofar as $\mu_n = o(n)$. One could also replace the former with $R \sqrt{\sum_{i=1}^n w_{s, i}^2}$, where $R$ is the range of the marginal distribution of $Y$, provided it is known. If $Y$ is sub-$\mathcal{U}$ marginally and $R$ is unknown, one can also replace the former with $2\bar{Y} \sqrt{\sum_{i=1}^n w_{s, i}^2}$ for large enough $n$. Lastly, we could also make use of the sample range of $\hat{e}$. Although downwardly biased, it is still a reasonable choice. It is also consistent until mild regularity conditions. This topic is explored further in Section 6 and the supplementary materials.
\end{example}

\section{A Quick Extension to Estimating Equations\label{sec:5}}

In this section, we extend the results of the previous sections to estimating equations, hence expanding their utility to a larger class of estimators. Consider a random variable $h(Y_i, \boldsymbol{\theta})$ for some measurable function $h$. Although $h(Y_i, \boldsymbol{\theta})$ will often also depend upon a set of fixed constants for $\forall i$, this will be left implicit. The estimating equations of interest here will take the following form, although sometimes it will be relevant to standardize them by a different function of $n$:
\begin{equation}
Q(\theta) = n^{-1} \sum_{i=1}^n \text{E}h(Y_i , \boldsymbol{\theta})
\end{equation}
\begin{equation}
Q_n(\theta) = n^{-1} \sum_{i=1}^n h(Y_i , \boldsymbol{\theta})
\end{equation}
Equations (3) and (4) together are a bedrock of statistical theory and practice and are well understood. To quicken developments, we will draw heavily upon the prior results of the authors mentioned in the introduction. The main contribution of this section is to demonstrate that the variance identities can be used to establish a uniform weak law of large number (UWLLN) for Eq. (4) under exceptionally unfavorable conditions and to show that the estimators resulting from this setup are conditionally or asymptotically additive. The latter fact thus allows for the use of the exponential inequalities of the previous section, even in the face of 'apocalyptic' dependence, provided the right error structure.

Denote $\boldsymbol{\beta} \in \Theta^q$ to be the target parameter of interest once again. If $\boldsymbol{\beta}$ minimizes (maximizes) $Q(\boldsymbol{\theta})$, then $\mathbf{\hat{\beta}_n}$ represents a sequence of random variables that constitute a class of extremum estimators, or M-estimators. For instance, when $h(Y_i, \boldsymbol{\theta})$ is a log-likelihood, $\mathbf{\hat{\beta}_n}$ is a (quasi) maximum likelihood estimator. When $Q(\boldsymbol{\beta}) = 0$, then $\mathbf{\hat{\beta}_n}$ is instead a sequence of root estimators. All generalized method of moment estimators fall into this class \cite{hall2005generalized}. This paper will focus on the latter in the spirit of \citet{yuan1998asymptotics}, although some attention will be provided to the former as well.

The reason for this is simple. The main objective is to (minimally) establish that the sequence of estimators $\mathbf{\hat{\beta}_n}$ derived in concordance with Eq. (3) and Eq. (4) converges in probability to $\boldsymbol{\beta}$. Theory associated with the minimization (maximization) of some $Q(\boldsymbol{\theta})$ can accomplish this; however, since this path ultimately reduces to a consideration of $\text{E}\boldsymbol{\nabla Q(\boldsymbol{\theta})} = \boldsymbol{0}$ as a moment condition, where $\boldsymbol{\nabla}$ is the gradient, one can ultimately explore this case with greater generality. For clarity, $Q$ will henceforth be used for extremum estimating equations, while $U$ will be used for root equations.

Only a couple new adjustments will be made to the typical set of regularity conditions, and partially for simplicity and intuition. The assumptions utilized are provided below:
\begin{enumerate}
\item[R1.] $\Theta^q$ is a compact Euclidean space
\item[R2.] $h(y_i , \boldsymbol{\theta})$ is continuous in $\boldsymbol{\theta}$ and is at least twice continuously differentiable w.r.t. $\boldsymbol{\theta} \in \Theta$ for $\forall y_i$
\item[R3.] $| \partial h^2 / \partial \boldsymbol{\theta} \hspace*{1mm} h(y_i , \boldsymbol{\theta})| \leq Z_i$ for some r.v. $Z_i$ and $\text{sup}_{1 \leq i \leq n} Z_i = O(1)$ for $\forall i$
\item[R4.] $\mathbf{\hat{\beta}_n} \in \Theta$ and $\mathbf{U_n(\hat{\beta}_n)} = \boldsymbol{0}$ for $\forall n$
\item[R5.] $\boldsymbol{\theta} \in \Theta$ and $\mathbf{U(\boldsymbol{\theta})} = \boldsymbol{0} \iff \boldsymbol{\theta} = \boldsymbol{\beta}$
\item[R6.] Denote $\mathcal{L}_n$ as a linear dependency graph s.t. $\zeta = \{h(Y_1, \boldsymbol{\theta}), \ldots, h(Y_n, \boldsymbol{\theta}) \}$. Then $\mu_n = o(n)$ for $\forall \boldsymbol{\theta} \in \Theta$
\item[R7.] Let $\boldsymbol{\theta} \in \Theta$ be arbitrary. Then $\mathbf{\nabla U(\boldsymbol{\theta})}$ is non-singular and $\mathbf{\nabla U_n(\boldsymbol{\theta})}$ converges in probability to $\mathbf{\nabla U(\boldsymbol{\theta})}$
\item[R8.] For all $\boldsymbol{\theta} \in \Theta$ and $\forall i$, $\text{Var} \{ h(Y_i , \boldsymbol{\theta}) \} < \infty$ 
\item[R9.] $Q(\boldsymbol{\beta}) < \underset{\boldsymbol{\theta} \in \Theta; \boldsymbol{\theta} \neq \boldsymbol{\beta}}{\text{inf}} Q(\boldsymbol{\theta})$
\item[R10.] For $\hat{\mathbf{\beta}_n} \in \Theta$, $Q_n (\mathbf{\hat{\beta}_n}) \leq \underset{\boldsymbol{\theta} \in \Theta}{\text{inf}} Q_n(\boldsymbol{\theta}) + o_p(1)$
\end{enumerate}

Note that, in addition to these conditions, there is an implicit assumption that all inverse matrices that are utilized exist. R8 will be useful for establishing uniform convergence in probability. Utilization of R8 is a stronger assumption than is usually required; however, it is not expensive. The usual assumption is that $\text{E} \underset{\boldsymbol{\theta}\in \Theta}{\text{sup}}|h(Y_i, \boldsymbol{\theta})| < \infty$ for $\forall i$ (R8'). In the usual maneuver, R8' is employed in conjunction with R1, R2, and the assumption that that a WLLN exists for the estimating equation to establish stochastic equicontinuity and thereby a UWLLN for Eq. (4). Outside of artificial pathological examples, however, such as when it is false that the variance of Eq. (4) tends to zero, this logic feasibly implies that R8 is also likely to hold. Regardless, R8 simply restricts $\Theta$ to a space of 'reasonable' values. R8 is also trivially true for well-behaved functions and bounded random variables, which is precisely our universe of concern.

Regardless, R8 is not strictly necessary. All results can also be achieved with R8'. As aforementioned, the most important contribution in this section arrives via R6 (or analogous statements). Although it is a slightly higher moment condition, it provides more intuition in practice. Additionally, it makes the proofs quick and easier to follow for non-specialists. 

The next lemma presented establishes that a UWLLN can be obtained even when the mean dependency diverges as a function of $n$. The proof is straightforward and similar to Proposition 5.
\begin{lemma}
Let $Z_i = h(Y_i, \boldsymbol{\theta})$ be any random variable for some measurable function $h$ s.t. R8 holds. Denote $\mu_n$ as the mean degree of the linear dependency graph $\mathcal{L}_n$ for $\zeta = \{Z_1, \ldots, Z_n \}$, say $\bar{Z}(\boldsymbol{\theta}) = n^{-1} \sum_{i=1}^n Z_i$, and denote $W = \underset{\boldsymbol{\theta} \in \Theta}{\text{sup}}|\bar{Z}(\boldsymbol{\theta}) - \text{E} \bar{Z}(\boldsymbol{\theta})|$. If R6 also holds, then $\bar{Z}(\boldsymbol{\theta})$ converges uniformly in probability to $\text{E}\bar{Z}(\boldsymbol{\theta})$.
\end{lemma}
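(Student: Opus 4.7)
The plan is to combine the variance identity of Proposition 1 (which yields pointwise convergence in probability via Chebyshev's inequality, in the spirit of Proposition 5) with a standard finite-cover argument that exploits the compactness of $\Theta$ (R1) and a uniform Lipschitz-in-$\boldsymbol{\theta}$ bound on $h(y_i,\boldsymbol{\theta})$ extracted from R2 and R3. This is the natural route because pointwise convergence on its own is already delivered by the Section~\ref{sec:2} machinery, and R1--R3 provide exactly the smoothness and compactness package needed to promote it to uniformity.

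First, I would establish pointwise convergence at every fixed $\boldsymbol{\theta} \in \Theta$. Applying Proposition 1 to $\bar{Z}(\boldsymbol{\theta})$ with $w_i = n^{-1}$ gives $\text{Var}\{\bar{Z}(\boldsymbol{\theta})\} = n^{-2}\sum_{i=1}^n \sigma_i^2(\boldsymbol{\theta}) + n^{-1}\mu_n \bar{\sigma}(\boldsymbol{\theta})$, with $\bar{\sigma}(\boldsymbol{\theta})$ controlled by $\max_i \sigma_i^2(\boldsymbol{\theta})$ via the argument of Proposition 4 (using $\eta = O(1)$). R8 supplies the finite variances and R6 supplies $\mu_n = o(n)$, so this variance tends to zero. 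Chebyshev's inequality then delivers $\bar{Z}(\boldsymbol{\theta}) \overset{p}{\to} \text{E}\bar{Z}(\boldsymbol{\theta})$ for each fixed $\boldsymbol{\theta}$.

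Second, I would promote this to uniform convergence. Compactness (R1) yields a finite cover of $\Theta$ by open balls of radius $\delta$ centered at $\boldsymbol{\theta}_1, \ldots, \boldsymbol{\theta}_{K_\delta}$. Combining R2 and R3 with compactness, the map $\boldsymbol{\theta}\mapsto h(y_i,\boldsymbol{\theta})$ admits a Lipschitz constant $L = O(1)$ uniform in $i$, so that $|\bar{Z}(\boldsymbol{\theta}) - \bar{Z}(\boldsymbol{\theta}_k)| \leq L\delta$ almost surely and $|\text{E}\bar{Z}(\boldsymbol{\theta}) - \text{E}\bar{Z}(\boldsymbol{\theta}_k)| \leq L\delta$ by dominated convergence. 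The triangle inequality then gives $W \leq \max_k |\bar{Z}(\boldsymbol{\theta}_k) - \text{E}\bar{Z}(\boldsymbol{\theta}_k)| + 2L\delta$. Setting $\delta = \epsilon/(4L)$ and using a finite union bound yields $\text{Pr}(W > \epsilon) \leq \sum_{k=1}^{K_\delta} \text{Pr}\{|\bar{Z}(\boldsymbol{\theta}_k) - \text{E}\bar{Z}(\boldsymbol{\theta}_k)| > \epsilon/2\}$, which tends to zero by the pointwise result applied on the finite grid of centers.

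The main obstacle I anticipate is extracting the uniform Lipschitz constant $L$ from R3. As written, R3 bounds a higher-order derivative of $h$ by a random variable $Z_i$ with $\sup_i Z_i = O(1)$, so one must invoke R2 together with the compactness of $\Theta$ (R1) to promote this to a uniform-in-$(y_i,\boldsymbol{\theta})$ bound on the first-order derivative, which then delivers the Lipschitz inequality through the mean value theorem. Once this bound is secured --- and once it is shown to survive averaging in $i$ and passing to expectations (via dominated convergence and the $O(1)$ uniformity) --- the remainder of the argument is a routine combination of Proposition 1, Chebyshev on the finite grid of centers, the union bound, and shrinking $\delta$ with $\epsilon$.
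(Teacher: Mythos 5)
Your proof is essentially correct but takes a genuinely different route from the paper's. The paper's proof is much shorter: it lets $\boldsymbol{\theta}_*$ denote the (random) point at which the supremum $W$ is attained, writes $W = |\bar{Z}(\boldsymbol{\theta}_*) - \text{E}\bar{Z}(\boldsymbol{\theta}_*)|$, and applies Markov's inequality to $W^2$, bounding $\text{E}W^2$ by the Proposition~1 variance expression $(1+\mu_n\phi_n)\,n^{-2}\sum_i \text{Var}(Z_i)$ evaluated at $\boldsymbol{\theta}_*$ as though it were a fixed parameter value --- i.e., it never uses compactness, continuity, or a covering argument at all, and only invokes R6 and R8 (consistent with the lemma's stated hypotheses). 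Your covering-number-plus-stochastic-equicontinuity argument is the standard textbook route and is in fact the more careful one: the paper's step implicitly interchanges a supremum with an expectation (bounding $\text{E}\sup_{\boldsymbol{\theta}}(\cdot)^2$ by $\sup_{\boldsymbol{\theta}}\text{E}(\cdot)^2$), which is exactly the issue your finite grid and Lipschitz bound are designed to circumvent. The trade-off is that your argument needs R1--R3 in addition to R6 and R8 --- hypotheses the lemma does not state, though they are standing regularity conditions assumed wherever the lemma is actually applied (e.g., Proposition~9) --- and, as you correctly flag, R3 as written controls a second-order derivative, so extracting a uniform-in-$(i,y_i)$ Lipschitz constant for $\boldsymbol{\theta}\mapsto h(y_i,\boldsymbol{\theta})$ requires an extra step (continuity of the gradient on the compact $\Theta$ plus the $O(1)$ bound on $\sup_i Z_i$) that you sketch but do not fully close. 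With that step made explicit, your proof is sound; it buys rigor on the sup--expectation interchange at the cost of stronger hypotheses and length, whereas the paper's buys brevity and minimal hypotheses at the cost of a step that a careful reader may find insufficiently justified.
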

\begin{proof}

Let $\boldsymbol{\theta}_*$ denote the $\boldsymbol{\theta} \in \Theta$ corresponding to $W=\underset{\boldsymbol{\theta} \in \Theta}{sup} |\bar{Z}(\boldsymbol{\theta}) - \text{E}\bar{Z}(\boldsymbol{\theta})|$. Hence, $W = |\bar{Z}(\boldsymbol{\theta}_*) - \text{E}\bar{Z}(\boldsymbol{\theta}_*)|$.

Also note by assumption that $\text{Var}( Z_i)$ is finite for $\forall i$ and that $\mu_n = o(n)$. Let $\epsilon > 0$ be arbitrary. Then:
\begin{align*}
\text{Pr}( W > \epsilon) = Pr ( W^2 > \epsilon^2 ) \leq \epsilon^{-2} \cdot \text{E} \{ \bar{Z}(\boldsymbol{\theta}_*) - \text{E}\bar{Z}(\boldsymbol{\theta}_*) \}^2 \implies \\
\text{Pr} ( W > \epsilon ) \leq  \epsilon^{-2} \cdot Var \left ( \bar{Z}(\boldsymbol{\theta}_*) \right ) \leq \epsilon^{-2} \cdot  (1 + \mu_n \phi_n ) \cdot n^{-2} \sum_{i=1}^n \text{Var} ( Z_i ) \implies \\
\lim_{n \to \infty} \text{Pr}( W > \epsilon ) \leq \lim_{n \to \infty} \epsilon^{-2} \cdot  \underset{1 \leq i \leq n}{\text{sup}} \text{Var}(Z_i) \cdot (1 + \mu_n \phi_n) \cdot n^{-1}  = 0
\end{align*}

Hence, $\underset{\boldsymbol{\theta} \in \Theta}{\text{sup}} |\bar{Z}(\boldsymbol{\theta}) - \text{E} \bar{Z}(\boldsymbol{\theta})|$ converges in probability to zero.
\end{proof}
\citet{xie2003asymptotics} explored a set of exact conditions for weak and strong convergence of generalized estimating equations under the assumption that the weighted sums of clustered observations were uncorrelated with those of other clusters. Again say that $n = \sum_{k=1}^K n_k$ for $K$ unique clusters. They did so under three settings: 1) $K \to \infty$ and $n_M$ is bounded for all $K$, 2) $K$ is bounded but $n_m \to \infty$, and 3) $n_m \to \infty$ as $K \to \infty$. 

An extension to Lemma 5 can be used to cover these settings. Similar to before, say $\mathcal{L}_{n_k}$ refers to the linear dependency graph for random variables within cluster $\zeta_k$ and $\mathcal{L}_{T_k}$ refers to the linear dependency graph of the set of random variables $\{T_{1}, \ldots, T_{K} \}$, where now $T_k = \sum_{j=1}^{n_k} h(Y_j, \boldsymbol{\theta})$. From here, again let $\mu_{n_k}$ signify the mean degree of $\mathcal{L}_{n_k}$ and $\mu_K$ the mean degree of $\mathcal{L}_{T_k}$.
\begin{lemma}
Suppose R1 and R8 and consider $\bar{Z}(\boldsymbol{\theta}) = K^{-1} \sum_{k=1}^K T_k$. Alternatively, for $n_M = \underset{k \in \mathcal{K}}{\text{max}}(n_k)$, define $\bar{Z}_M(\boldsymbol{\theta}) = {n_M}^{-1} \sum_{k=1}^K T_k$ and $n_m = \underset{k \in \mathcal{K}}{\text{min}}(n_k)$ as before. Then:
\begin{enumerate}
\item[1)] Provided $K(n) \to \infty$ as $n \to \infty$ and $n_M$ is bounded for all $K$, if $\mu_K = o(K)$, then $\bar{Z}(\boldsymbol{\theta}) \overset{p}{\to} E\bar{Z}(\boldsymbol{\theta})$ uniformly
\item[2)] If $K(n)$ is bounded but $n_m \to \infty$, and if $\mu_{n_k} = o(n_k)$ for $\forall k$, then $\bar{Z}_M(\boldsymbol{\theta}) \overset{p}{\to} E\bar{Z}_M(\boldsymbol{\theta})$ uniformly
\item[3)] If $n_m \to \infty$ as $K(n) \to \infty$, then $\bar{Z}_M(\boldsymbol{\theta}) \overset{p}{\to} \text{E}\bar{Z}_M(\boldsymbol{\theta})$ uniformly if $\mu_K = O(1)$, $K = o(n_M)$, and $K \cdot \mu_{n_k} = o(n_M)$ for $\forall k$. Alternatively, the same result holds for $\bar{Z}(\boldsymbol{\theta})$ if $\mu_{n_k} = O(1)$ for $\forall k$, $n_M=o(K)$, and $n_M \cdot \mu_K = o(K)$ 
\end{enumerate}
\end{lemma}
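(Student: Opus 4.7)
The plan is to mirror the variance-plus-Chebyshev template of Lemma 5: denote by $\boldsymbol{\theta}_*$ the parameter value attaining the supremum $W$, apply Chebyshev's inequality to $|\bar Z(\boldsymbol{\theta}_*) - \text{E}\bar Z(\boldsymbol{\theta}_*)|$ (or the $\bar Z_M$ analogue), and show that the variance at $\boldsymbol{\theta}_*$ vanishes in each of the three cases. The computational engine is a two-level application of the variance identity: Proposition 7 (or, equivalently, Proposition 1 applied at the cluster level) yields
\[
\text{Var}\!\left(\sum_{k=1}^K T_k\right) = \{1 + \mu_K \phi_K\} \sum_{k=1}^K \text{Var}(T_k),
\]
while Proposition 1 within each cluster gives
\[
\text{Var}(T_k) = \{1 + \mu_{n_k}\phi_{n_k}\} \sum_{j \in J_k} \text{Var}(Z_j).
\]
Under R8 the within-cluster variances are uniformly bounded by some constant $C$, and under bounded variances both $\phi_{n_k}$ and $\phi_K$ are $O(1)$ by Proposition 4 (via the $\eta$ bound).

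With this two-level decomposition in hand, each case reduces to carefully tracking asymptotic orders. For (1), boundedness of $n_M$ forces $\mu_{n_k} \leq n_M - 1 = O(1)$ so $\text{Var}(T_k) = O(1)$; then $\text{Var}(\bar Z) = K^{-2} \cdot O(\max(\mu_K, 1)) \cdot O(K) = O(K^{-1}\max(\mu_K, 1)) = o(1)$ by $\mu_K = o(K)$. For (2), $K$ bounded makes $\mu_K \leq K-1 = O(1)$, while $\text{Var}(T_k) = O(n_k \max(\mu_{n_k}, 1)) \leq O(n_M \max_k\mu_{n_k})$; since $\mu_{n_k} = o(n_k)$ combined with $n_k \leq n_M$ yields $\max_k\mu_{n_k}/n_M \to 0$, we obtain $\text{Var}(\bar Z_M) = O(\max_k \mu_{n_k}/n_M) = o(1)$. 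For the primary sub-case of (3), $\{1 + \mu_K\phi_K\} = O(1)$ by hypothesis and $\text{Var}(T_k) \leq Cn_M\max(\mu_{n_k}, 1)$, giving $\text{Var}(\bar Z_M) = O(K/n_M + K\max_k\mu_{n_k}/n_M) = o(1)$ under $K = o(n_M)$ and $K\mu_{n_k} = o(n_M)$. For the alternative sub-case, $\mu_{n_k} = O(1)$ forces $\text{Var}(T_k) = O(n_M)$, hence $\text{Var}(\bar Z) = K^{-2}\{1 + \mu_K\phi_K\} \cdot O(Kn_M) = O(n_M/K + n_M\mu_K/K) = o(1)$ by $n_M = o(K)$ and $n_M\mu_K = o(K)$.

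In each case, combining the vanishing-variance bound with Chebyshev's inequality at $\boldsymbol{\theta}_*$---exactly as in the closing step of Lemma 5---delivers the claimed uniform convergence in probability. The hard part is essentially bookkeeping: one must correctly balance cluster-level orders ($K$, $\mu_K$) against within-cluster orders ($n_k$, $\mu_{n_k}$) for two different normalizations ($K^{-1}$ versus $n_M^{-1}$), and this balance is precisely what dictates which of the four rate conditions is invoked. A minor but genuine technical point worth verifying is that $\phi_K$ and $\phi_{n_k}$ remain uniformly $O(1)$ across cases, but R8 together with Proposition 4 delivers this with no additional assumption.
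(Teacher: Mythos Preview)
Your proposal is correct and follows essentially the same approach as the paper: take $\boldsymbol{\theta}_*$ realizing the supremum, apply Chebyshev, and bound the variance via a two-level application of the variance identity (Proposition 1 within clusters, Proposition 1/7 across clusters), then check asymptotic orders case by case. The paper's proof is slightly terser---it handles only Cases 1 and 2 explicitly and, in Case 2, replaces the cluster-level factor $\{1+\mu_K\phi_K\}$ by the cruder bound $K$ (valid since $K$ is bounded)---but the substance is identical.
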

\begin{proof}

The first case is very similar to that of Lemma 5, except now the cluster structure of each $T_k$ is being considered. We will only prove the first two statements since the proof is largely repetitive.

\textbf{Case 1:} Suppose $K \to \infty$ and $n_M$ is bounded for all $K$ as $n \to \infty$. Call this bounding constant $M_n$. Furthermore, suppose $\mu_K = o(K)$. Since each $T_{k}$ is a finite linear combination of correlated random variables with finite variance, $\text{Var}(T_{k}) < \infty$ for $\forall k$. This can be seen by observing that $\text{Var}(T_{k}) \leq n_k  \sum_{j=1}^{n_k} \text{Var} \{ h(Y_j, \theta) \} \leq M_n^2 \underset{1 \leq j \leq n_k}{\text{sup}} \text{Var} \{ h(Y_j, \theta) \} <  \infty$ under the premises. Hence, by Lemma 5, $\bar{Z}(\boldsymbol{\theta})$ obeys a UWLLN.

\textbf{Case 2:} Suppose $K(n)$ is bounded but $n_m \to \infty$. Denote $M_K$ and $C_{*}$ as the asymptotic bounds of $K(n)$ and $\phi_{n_k}$ for $\forall k$ respectively. Furthermore, suppose $\mu_{n_k} = o(n_M)$ for $\forall k$. Now, let $W=\underset{\boldsymbol{\theta} \in \Theta}{\text{sup}}|\bar{Z}_M(\boldsymbol{\theta}) - \text{E}\bar{Z}_M(\boldsymbol{\theta})| = |\bar{Z}_M(\boldsymbol{\theta}_*) - \text{E} \bar{Z}_M(\boldsymbol{\theta}_*)|$ for an arbitrary $\epsilon > 0$. Additionally, denote $\sigma^2 = \underset{1 \leq u \leq n}{\text{max}} \text{Var} \{ h(Y_u, \boldsymbol{\theta}_*) \}$, $\mu_{*} = \underset{k \in \mathcal{K}}{\text{max}}(\mu_{n_k})$ WLOG, and $\phi_* \leq C_*$. Then $\text{Pr} \left ( W > \epsilon \right ) \leq \epsilon^{-2} \text{Var}\{ \bar{Z}_M(\boldsymbol{\theta}_*) \} \leq \epsilon^{-2} K \sum_{k=1}^K n_M^{-2} \text{Var} (T_{k; \boldsymbol{\theta}_*})$. And:
$$ \epsilon^{-2} K \sum_{k=1}^K n_M^{-2} \text{Var} (T_{k; \boldsymbol{\theta}_*}) \leq \epsilon^{-2} K \sum_{k=1}^K n_M^{-2}(1+\mu_{n_k} \phi_{n_k}) [ \sum_{j=1}^{n_k} \text{Var} \{ h(Y_{k, j}, \boldsymbol{\theta}_*) \} ]  \leq \epsilon^{-2} K^2 n_M^{-1} (1 + \mu_* \phi_*) \sigma^2 \implies $$
$$\lim_{n \to \infty} \text{Pr}  \left ( W > \epsilon \right ) \leq \epsilon^{-2} M_K^2 \sigma^2 \lim_{n \to \infty} n_M^{-1} (1 + \mu_* C_*) = 0$$
\end{proof}
We note that Lemma 5 can be used with known proof strategies to demonstrate that $\mathbf{\hat{\beta}_n} \overset{p}{\to} \boldsymbol{\boldsymbol{\beta}}$ for M-estimators when it is possible for $\mu_n (n) \to \infty$, and only under the auspices that $\mu_n (n)$ grows at a rate that is sufficiently sub-linear \citep{hall2005generalized}. Now, Lemma 6 is used to demonstrate that $\mathbf{\hat{\boldsymbol{\beta}}_n}$ is consistent and asymptotically additive. For this matter, recall that $\mathbf{U_n(\boldsymbol{\theta})} = n^{-1} \sum_{i=1}^n \mathbf{h(Y_i, \boldsymbol{\theta})}$ is now a $q \times 1$ vector and $\{ \mathbf{\nabla U_n(\boldsymbol{\theta})} \}^{-1}$ is a $q \times q$ matrix.
\begin{proposition}
Suppose R1-R8 in conjunction with $\mathbf{U_n(\boldsymbol{\theta})}$ and also that $\mathbf{\hat{\boldsymbol{\beta}}_n}$ satisfies R4. Additionally, suppose any of the three scenarios stated in Lemma 6. Then $\mathbf{\hat{\boldsymbol{\beta}}_n} \overset{p}{\to} \boldsymbol{\beta}$ and $\sqrt{n}(\mathbf{\hat{\boldsymbol{\beta}}_n} - \boldsymbol{\beta})$ converges in distribution to a vector of additive statistics as $n \to \infty$.
\end{proposition}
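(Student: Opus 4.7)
The plan is to follow the standard two-step recipe for consistency and asymptotic linearity of Z-estimators, with Lemma 6 doing the heavy lifting wherever an i.i.d.\ WLLN would ordinarily appear. First I would establish $\mathbf{\hat{\boldsymbol{\beta}}_n} \overset{p}{\to} \boldsymbol{\beta}$, then mean-value-expand $\mathbf{U}_n$ at $\boldsymbol{\beta}$ to produce the asymptotically additive representation.

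\textbf{Consistency.} I would first upgrade Lemma 6 from scalar to a componentwise UWLLN for $\mathbf{U}_n(\boldsymbol{\theta})$, which is immediate because each coordinate of $\mathbf{h}(Y_i,\boldsymbol{\theta})$ inherits R2 and R8 and the dependency-graph condition of the chosen scenario. Thus $\sup_{\boldsymbol{\theta}\in\Theta}\|\mathbf{U}_n(\boldsymbol{\theta}) - \mathbf{U}(\boldsymbol{\theta})\|\overset{p}{\to} 0$. Combining this with R1 (compactness), R5 (unique zero of $\mathbf{U}$ at $\boldsymbol{\beta}$), and R4, a standard well-separated-zero argument closes the step: for any open neighborhood $N$ of $\boldsymbol{\beta}$, the continuous function $\|\mathbf{U}(\cdot)\|$ achieves a strictly positive infimum $\delta_N$ on the compact set $\Theta\setminus N$, so $\mathbf{\hat{\boldsymbol{\beta}}_n}\notin N$ would force $\|\mathbf{U}_n(\mathbf{\hat{\boldsymbol{\beta}}_n})\|\geq \delta_N - o_p(1)$, contradicting R4.

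\textbf{Asymptotic linearity.} Next I would perform a coordinatewise mean-value expansion, which is justified by R2. Writing
$$\mathbf{0} = \mathbf{U}_n(\mathbf{\hat{\boldsymbol{\beta}}_n}) = \mathbf{U}_n(\boldsymbol{\beta}) + \mathbf{\nabla U}_n(\boldsymbol{\tilde{\beta}}_n)(\mathbf{\hat{\boldsymbol{\beta}}_n} - \boldsymbol{\beta})$$
for some intermediate $\boldsymbol{\tilde{\beta}}_n$ (one per coordinate, lying on the segment between $\boldsymbol{\beta}$ and $\mathbf{\hat{\boldsymbol{\beta}}_n}$), consistency drives $\boldsymbol{\tilde{\beta}}_n \overset{p}{\to}\boldsymbol{\beta}$. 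By R7, $\mathbf{\nabla U}_n(\boldsymbol{\beta}) \overset{p}{\to}\mathbf{\nabla U}(\boldsymbol{\beta})$, and R3 delivers a uniform envelope on the second derivatives so that the difference $\mathbf{\nabla U}_n(\boldsymbol{\tilde{\beta}}_n) - \mathbf{\nabla U}_n(\boldsymbol{\beta})$ is bounded in norm by $\{n^{-1}\sum_i Z_i\}\cdot\|\boldsymbol{\tilde{\beta}}_n - \boldsymbol{\beta}\|$; because R3 makes $Z_i$ uniformly bounded and R6 gives the requisite WLLN via Proposition 5, this quantity is $o_p(1)$. A Slutsky argument then yields $\mathbf{\nabla U}_n(\boldsymbol{\tilde{\beta}}_n)\overset{p}{\to}\mathbf{\nabla U}(\boldsymbol{\beta})$, which is non-singular by R7. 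Rearranging,
$$\sqrt{n}(\mathbf{\hat{\boldsymbol{\beta}}_n} - \boldsymbol{\beta}) = -[\mathbf{\nabla U}(\boldsymbol{\beta})]^{-1}\sqrt{n}\,\mathbf{U}_n(\boldsymbol{\beta}) + o_p(1).$$
Since $\sqrt{n}\,\mathbf{U}_n(\boldsymbol{\beta}) = n^{-1/2}\sum_{i=1}^n \mathbf{h}(Y_i,\boldsymbol{\beta})$ is a coordinatewise additive statistic in the sense of Section 2 (uniform weights $n^{-1/2}$), and left-multiplication by the deterministic matrix $-[\mathbf{\nabla U}(\boldsymbol{\beta})]^{-1}$ produces a fixed linear combination of additive statistics, each entry of the leading term is itself an additive statistic in the $h(Y_i,\boldsymbol{\beta})$'s, which is exactly the claimed conclusion.

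\textbf{Main obstacle.} The genuinely delicate bookkeeping is the convergence $\mathbf{\nabla U}_n(\boldsymbol{\tilde{\beta}}_n) \overset{p}{\to}\mathbf{\nabla U}(\boldsymbol{\beta})$, i.e.\ stochastic equicontinuity of the Jacobian under dense dependence. R3 reduces this to a WLLN for the envelope $n^{-1}\sum_i Z_i$, but that WLLN has to be justified under the dependency conditions of whichever Lemma~6 scenario is in force, and not for $h$ directly but for the dominating variables. The cleanest route is to note that since each $Z_i$ is bounded in $n$, Proposition~5 (or its clustered analog via Lemma~6) applies verbatim to the $Z_i$'s, provided their linear dependency graph is no denser in asymptotic order than $\mathcal{L}_n$---a mild auxiliary assumption already implicit in treating $h$ and its partials as having a common, graph-governed dependency structure. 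The remainder of the argument then collapses to classical Slutsky-plus-Taylor manipulations.
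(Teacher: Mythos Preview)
Your proposal is correct and follows essentially the same route as the paper: componentwise UWLLN from Lemma~6, consistency of $\hat{\boldsymbol{\beta}}_n$, mean-value expansion of $\mathbf{U}_n$, then a Slutsky/Cram\'er--Wold step to the additive representation $-\{\nabla\mathbf{U}(\boldsymbol{\beta})\}^{-1}\sqrt{n}\,\mathbf{U}_n(\boldsymbol{\beta})$. The only notable differences are cosmetic: the paper outsources the consistency step to Theorem~3 of \citet{yuan1998asymptotics} rather than writing out the well-separated-zero argument, and it explicitly notes that Lemma~6 in scenarios~2 and~3 delivers a UWLLN for the rescaled equations $\mathbf{U}_M=n_M^{-1}n\,\mathbf{U}_n$ or $\mathbf{U}_K=K^{-1}n\,\mathbf{U}_n$ rather than $\mathbf{U}_n$ itself, then observes that $\{\nabla\mathbf{U}_M\}^{-1}\mathbf{U}_M=\{\nabla\mathbf{U}_n\}^{-1}\mathbf{U}_n$ so the normalization is immaterial---a bookkeeping point you silently absorb but which is worth one sentence.
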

\begin{proof}
Recall that the premises of Lemma 6 make use of two different equations. One was standardized by $n_M$ and one was standardized by $K$ w.r.t. $\mathbf{U_n(\boldsymbol{\boldsymbol{\theta}})} = n^{-1} \sum_{k=1}^K \mathbf{T_k} = \sum_{k=1}^K n^{-1} \sum_{j=1}^{n_k} \mathbf{h(Y_{k, j}, \boldsymbol{\theta})}$. Denote these re-expressions $\mathbf{U_M(\boldsymbol{\theta})} = n_M^{-1}n \mathbf{U_n(\boldsymbol{\theta})}$ and $\mathbf{U_K(\boldsymbol{\theta})} = K^{-1}n \mathbf{U_n(\boldsymbol{\theta})}$ respectively. To establish that $\mathbf{U_K(\boldsymbol{\theta})}$ and $\mathbf{U_M(\boldsymbol{\theta})}$ follow a UWLLN under the relevant conditions of Lemma 6, one simply needs to specify an arbitrary component $s$ of either random vector. Since this $s$th component is of the form covered by Lemma 6, under the auspices of R1 and R8, it follows that the $s$th component converges uniformly in probability to its expectation. Therefore, since $s$ was arbitrary, $\mathbf{U_K(\boldsymbol{\theta})}$ converges uniformly in probability to $\text{E}\mathbf{U_K(\boldsymbol{\theta})}$ and $\mathbf{U_M(\boldsymbol{\theta})}$ does the same to $\text{E}\mathbf{U_M(\boldsymbol{\theta})}$. Thus, under Theorem 3 of \citet{yuan1998asymptotics} and R1, R2, R4, and R5 it is then implied that $\mathbf{\hat{\boldsymbol{\beta}}_n} \overset{p}{\to} \boldsymbol{\beta}$. 

Since $\{\mathbf{\nabla U_M(\boldsymbol{\theta})} \}^{-1} \mathbf{U_M(\boldsymbol{\theta})} = \{\mathbf{\nabla U_K(\boldsymbol{\theta}} \}^{-1} \mathbf{U_K(\boldsymbol{\theta})} = \{\mathbf{\nabla U_n(\boldsymbol{\theta})} \}^{-1} \mathbf{U_n(\boldsymbol{\theta})}$ for all $\boldsymbol{\theta} \in \Theta$, it is possible to proceed WLOG utilizing only $\mathbf{U_n(\boldsymbol{\theta})}$.

Now, note that under R1, R2, R3, R7, and the continuous mapping theorem, $\{ \mathbf{\nabla U_n(\boldsymbol{\theta})} \}^{-1}$ converges uniformly in probability to $\{ \mathbf{\nabla U(\boldsymbol{\theta})} \}^{-1}$ for $\forall \boldsymbol{\theta} \in \Theta$. Also, for some $\tilde{\boldsymbol{\beta}} \in \Theta$ that is between $\mathbf{\hat{\boldsymbol{\beta}}_n}$ and $\boldsymbol{\beta}$ and an arbitrary $q \times 1$ vector of constants $\boldsymbol{\lambda}$:
\begin{align*}
\sqrt{n} \cdot \boldsymbol{\lambda}^{\top} \mathbf{U_n(\boldsymbol{\beta})} &= -\sqrt{n} \cdot \boldsymbol{\lambda}^{\top} \{ \mathbf{\nabla U_n(\tilde{\boldsymbol{\beta}})} \} (\mathbf{\hat{\boldsymbol{\beta}}_n} - \boldsymbol{\beta}) 
\end{align*}
Therefore, since $\mathbf{\hat{\boldsymbol{\beta}}_n} \overset{p}{\to} \boldsymbol{\beta}$, it is also true under our conditions that $\sqrt{n} \cdot \boldsymbol{\lambda}^{\top} \mathbf{U_n(\boldsymbol{\beta})} \overset{p}{\to} -\sqrt{n} \cdot  \boldsymbol{\lambda}^{\top} \{ \mathbf{\nabla U(\boldsymbol{\beta})} \} (\mathbf{\hat{\boldsymbol{\beta}}_n} - \boldsymbol{\beta})$, which implies that $\sqrt{n} (\mathbf{\hat{\boldsymbol{\beta}}_n} - \boldsymbol{\beta}) \overset{d}{\to} -  \sqrt{n} \{ \mathbf{\nabla U(\boldsymbol{\beta})} \}^{-1} \mathbf{U_n(\boldsymbol{\beta})}$ by the Cramer-Wold device \cite{feng2013mean, hall2005generalized, yuan1998asymptotics}. This completes the proof since $\sqrt{n} \{ \mathbf{\nabla U(\boldsymbol{\beta})} \}^{-1} \mathbf{U_n(\boldsymbol{\beta})}$ is a vector of random sums.
\end{proof}
It is important to note that $\sqrt{n} (\mathbf{\hat{\boldsymbol{\beta}}_n} -\boldsymbol{\beta})$ will have finite variance without additional stabilization only if the maximum mean degree of linear dependency associated with the score functions is $O(1)$. We provide two general examples to contextualize these results. 
\begin{example}{(\textbf{Quasi-MLEs.})} In this example, standard generalized linear models (GLMs) will be considered for dependent data. Recall that it has often been stated that GLMs are inappropriate when the response variables are dependent.

Say $Q_n = n^{-1} \sum_{i=1}^n ln \{ f(Y_i, \boldsymbol{\theta}) \}$ s.t. $Y_i$ is a member of an exponential family with distribution function $f(y_i, \boldsymbol{\theta})$ for $\forall i$. Also, say $\text{E}Y_i = \mu_i = g(\mathbf{x_i}\boldsymbol{\beta})$ for a differentiable, monotonic canonical link function $g$ and conformable vector of constants $\mathbf{x_i}$. For $\mathbf{Y} \in \mathbb{R}^{n \times 1}$, $\mathbf{\mu} \in \mathbb{R}^{n \times 1}$, and $\mathbf{x} \in \mathbb{R}^{n \times q}$, denote $\mathbf{d} = \mathbf{\partial \boldsymbol{\mu} / \partial \hspace*{1mm} \boldsymbol{\beta}}$ and $\mathbf{W}$ as the diagonal $n \times n$ matrix s.t. $\mathbf{W}_{i,i} = \phi \partial \mu_i / \partial \mathbf{x_i} \boldsymbol{\beta}$ for dispersion constant $\phi$. Then $\mathbf{\nabla Q_n} = \mathbf{U_n} = n^{-1} \mathbf{d^{\top} W^{-1} (\mathbf{Y - \boldsymbol{\mu}})}$. Lastly, define a $\mathcal{L}$ graph for $\{ \epsilon_i \}_{i \in I}$. From the previous results, it is then implied that $\sqrt{n} (\boldsymbol{\hat{\beta}}_n - \boldsymbol{\beta}) \overset{d}{\to} \sqrt{n} \cdot (\mathbf{d}^{\top} \mathbf{W}^{-1} \mathbf{d})^{-1} \mathbf{d}^{\top} \mathbf{W}^{-1} \boldsymbol{\epsilon} =\sqrt{n} \cdot \mathbf{w}\boldsymbol{\epsilon}$, say. Then $\text{Var}\{ \sqrt{n} (\boldsymbol{\hat{\beta}}_n - \boldsymbol{\beta}) \} = n \cdot \mathbf{w}\mathbf{V}\mathbf{w}^{\top} \{\boldsymbol{1}^{p \times p} + \mu_n \boldsymbol{\Gamma} \}$ asymptotically, where $ \{\boldsymbol{1}^{p \times p} + \mu_n \boldsymbol{\Gamma} \}$ has the same definition as in Section 2, and where $\mathbf{V}$ is a diagonal matrix of variances. If $\mu_n = O(1)$ and the conditions of Section 4 hold, then an exponential inequality can be used for constructing confidence sets. Otherwise, one needs to additionally use a proper choice of $ \{1 + \mu_n \phi_{n, s} \}^{-1/2}$ to stabilize the variance of $\hat{\beta}_s$ and to ensure that all relevant objects exist. Of course, if it is believed that a central limit theorem holds, a deterministic correction can also be employed with a Wald-like statistic.
\end{example}

\begin{example}{(\textbf{Iteratively re-weighted least squares.})} Since the variance identity and results of Section 4 can be applied to additive statistics with the necessary error structures, it is applicable to estimators calculated via iteratively re-weighted least squares (IRWLS). This extends their utility to a vast number of non-linear contexts. For instance, it then applies to the (weighted) minimization of $Q_n(\boldsymbol{\theta}) = (pn)^{-1} \sum_{i=1}^n \{ Y_i - g(\mathbf{x_i}\boldsymbol{\theta})  \}^p$ for some measurable function $g$ and $p > 0$, insofar as IRWLS is used as a fitting procedure. Here, the special case will be considered s.t. $p=2$ under a $K$ cluster partition. This case will also use the notation from the previous example. It is apropos to note that the pragmatic form of quasi-MLEs are a special case of this setup when IRWLS is used as an approximate fitting algorithm.

IRWLS is well studied and has been found to have good, reliable qualities. Under the requirement that $\sum_{k=1}^K \mathbf{d^{\top}_{k, t} W^{-1}_{k, t} d_{k, t}}$ is positive definite or the assumptions previously stated hold, the IRWLS estimator $\boldsymbol{\hat{\beta}_t}$ can approximate $\boldsymbol{\hat{\beta}_n}$ to an almost arbitrary precision \citet{yuan1998asymptotics}. 

To briefly show this, let $\tau \equiv 0$ as a theoretical exercise. Then, only imposing the condition that $\mathbf{W_{k, t}}$ be positive definite: $\boldsymbol{\hat{\beta}_t} - \boldsymbol{\hat{\beta}_{t-1}} = ( \sum_{k=1}^K \mathbf{d^{\top}_{k, t} W^{-1}_{k, t} d_{k, t}} )^{-1} \sum_{k=1}^K \mathbf{d^{\top}_{k, t} W^{-1}_{k, t} (Y_k - \boldsymbol{\mu}_{k, t})} \equiv \boldsymbol{0}$, which implies that $K^{-1} \sum_{k=1}^K  \mathbf{d^{\top}_{k, t} W^{-1}_{k, t} (Y_k - \boldsymbol{\mu}_{k, t})} = \mathbf{U_K(\boldsymbol{\hat{\beta}}_{t})} \equiv \mathbf{U_K(\boldsymbol{\hat{\beta}}_{t-1})} \equiv \boldsymbol{0}$.
Under the assumptions of Proposition 9, the IRWLS estimator converges in distribution to the correct target and is asymptotically additive. Hence, once again, all previous results apply for large enough $n$, provided $\mu_n = O(1)$ or proper stabilization is employed, and the other relevant conditions of Section 4 hold. 

However, if one is willing to condition on the sigma-algebra of events generated by some $\boldsymbol{\hat{\beta}_{t-1}}$ following the occurrence of the event that $d(\boldsymbol{\hat{\beta}_{t-1}},  \boldsymbol{\hat{\beta}_{t-2}}) < \tau$ for a sufficiently small $\tau > 0$, and for a long enough run, then $\boldsymbol{\hat{\beta}}_t$ is an additive estimator for \textit{all} finite samples. Therefore, the variance identity and exponential inequalities of Section 4 can be used for conducting cogent inference with functional approximations that minimize error in $\ell_p$ space. Importantly, this can again be accomplished when unknown, intractable, and possibly non-sparse dependency structures are present.
\end{example}
\section{Simulations and a Data Application \label{sec:6}}
This section offers two simulation experiments under unfavorable dependence conditions: one for $\bar{Y}$ and one for $\boldsymbol{\hat{\beta}}$. Both for symmetric $\mathcal{U}$ variables. An approximation to gauge the robustness of A5 will help since, in general, the exact form of $\text{E} (\text{exp} \{s \mathbf{w} \boldsymbol{\epsilon} \})$ is intractable. To this end, we can use the fact that $\text{E} (\text{exp} \{s \mathbf{w} \boldsymbol{\epsilon} \}) \approx 1 + 2^{-1}s^2 \{1 + \mu_n \phi_n \} \sum_{i=1}^n w_i^2 \sigma_i^2$ by Taylor approximation and that the latter expression is bounded by $ \text{exp} \{ 2^{-1}s^2 (1 + \mu_n \phi_n ) \sum_{i=1}^n w_i^2 \sigma_i^2 \}$. For our $\mathcal{U}$ variables, it is then implied that $ \text{exp} \{ 2^{-1}s^2 (1 + \mu_n \phi_n ) \sum_{i=1}^n w_i^2 \sigma_i^2 \} \leq \text{exp} \{ 24^{-1} s^2 \sum_{i=1}^n w_i^2 R_i^2\}$, at least as an approximate rule of thumb for A5. Although the value on the right side of the inequality bounds the functional average specified in A5, it does so tightly. For instance, if $w_i = n^{-1}$ for $\forall i$, each error variable has the same upper bound for its support, and $s= C \sqrt{n}$ for some positive constant $C$, $\text{Av}_* (\text{exp} \{s \mathbf{w} \boldsymbol{\epsilon} \}) \to \text{exp} \{24^{-1} C^2 R^2  \}$ quickly. Regardless, further algebraic manipulation of this setup implies the following rough bound on the summary values of the dependency structure: $ \mu_n \phi_n \leq 12^{-1} \{\sum_{i=1}^n w_i^2 \sigma_i^2 \}^{-1} \sum_{i=1}^n w_i^2 R_i^2 - 1$. Homogeneity of variances and ranges results in a simpler weight-invariant bound of $12^{-1} \sigma^{-2} R^2 -1$. For simplicity, all ranges and variances will be equal. Insofar as this approximate bound holds, A5 should approximately hold as well and inference should remain robust, even with mild violations.

All experiments possess $N=10,000$ simulations. To get a more accurate assessment of robustness to violations of A5, however, we also compare $\hat{A} = \text{max}(N^{-1} \sum_{i=1}^N \text{exp} \{s \mathbf{w} \boldsymbol{e}_i \}, N^{-1} \sum_{i=1}^N \text{exp} \{-s \mathbf{w} \boldsymbol{e}_i \})$ to $\text{Av}_* = \prod_{i=1}^n \{s w_i M_i \}^{-1} \text{sinh}(s w_i M_i)$, which is an exact form of $\text{Av}_* (\text{exp} \{s \mathbf{w} \boldsymbol{\epsilon} \})$ for $\mathcal{U}$ observations of our type. Investigating all arbitrary values of $s$ isn't possible or necessary. Instead, we set $s= \{M^2  \cdot c_* \cdot \sum_{i=1}^n w_i^2 \}^{-1/2}6 \cdot \{ 6^{-1} \text{log}(2/\alpha) \}^{1/2}$ for a context-dependent value of $c_*$ that limits the size of the exponential and average values for readability. This value of $s$ also corresponds to the optimal value of Theorem 3 with homogeneous ranges and $\tau$ set to the required expression for conservative, two-sided $1-\alpha$ confidence sets. Additionally, we employ a value of $s$ that is $O(\sqrt{n})$ since this order better corresponds to the rule of thumb. Since we will be evaluating $95$\% confidence sets, $\alpha = .05$ for all experiments.

\paragraph{Setup} To test 'apocalyptic' scenarios, each simulation employs fully connected linear dependency graphs. In other words, we set $\mu_n = n-1$ for $n \in \{ 100, 500, 1500\}$. For the $\bar{Y}$ setup, $Y_i \sim Beta(\alpha, \alpha)$. This translates our rule of thumb to $3^{-1} (2\alpha +1) -1$. The first set of simulations uses $\alpha = 10$ universally. According to the rule of thumb, then, confidence sets should start to lose their nominal values when $\phi_n = .06, .01$, and $.004$ respectively. For comparison, we establish a baseline at $\phi=0$ and make use of values that neighbor these breakdown points. In many unfavorable settings, we would still not expect $\mu_n$ to always attain its maximum or for $\phi_n$ to take moderate values when it does. Hence, this series of simulations is built to demonstrate the robustness of the results for finite sample inference in practical settings since decent performance under these conditions suggest an adequate level of dependability in more modest ones. A second experiment fixes $\phi$ to $.1$ for $n=500$ observations and varies only $\alpha$ for values in $\{10, 25, 50, 100 \}$ to further examine the relationship between the range-variance ratio and robustness to statistical dependence. For all of these simulations, $c_* = 10$. For the $\mathcal{U}$ regression setup, each $\epsilon_i \sim TruncNorm(M=20, \mu = 0, \sigma^2 =25)$. The linear model is characterized by $Y_i = 20 + 10 \cdot t_i + \epsilon_i$, where each $t_i$ is a fixed draw from $T_i \sim TruncNorm(m=-5, M=5, \mu =1, \sigma^2 = 1)$. Here, $c_* = \sigma = 5$ for control of exponential size.

Dependencies in the outcome variables are induced with Gaussian copulas. Literature on this method is available elsewhere \citep{embrechts2001modelling, demarta2005t}. \textbf{R} version 4.2.2 statistical software and the package 'copula' are employed for all experiments \citep{R, yan2007enjoy}. The Beta distribution simulations make use of an exchangeable correlation matrix with off-diagonal cells populated by $\phi_n$. The strategy for the regression simulations is more complicated. Essentially, we specify an unstructured correlation matrix and set its non-diagonal values to the corresponding elements of $ 25^{-1}\phi_* n^2 \cdot \mathbf{w}_1\mathbf{w}^{\top}_1$ for $\phi_* \in \{0, .05, .1, .15 \}$. The first row of weights is used as a basis since the product of its values will often match the valence of the weight products in the sum of covariances. This will induce a dense mosaic of positive summands and inflate the variance.

As usual, empirical coverage is estimated by $N^{-1} \sum_{i=1}^N 1_{\theta \in \mathcal{C}}$, where $\mathcal{C}$ is the constructed confidence set. Three coverage values are ultimately estimated---$\hat{\text{CI}}_{Wald}, \hat{\text{CI}}_{\mathcal{U}}$, and $\hat{\text{CI}}_{\hat{R}}$---although the third is estimated for regression simulations only. The estimated Wald confidence sets make use of cluster-robust standard errors for the Beta simulations and robust standard errors in the style of generalized estimating equations with an exchangeable correlation structure for the regression cases \citep{hojsgaard2006r}. They suppose asymptotic normality. To mimic the specification of a partially correct but invalid partition, the outcome variables are assigned to $10^{-1}n$ clusters sequentially. The $\hat{\text{CI}}_{\mathcal{U}}$ estimator corresponds to the confidence sets that are constructed in accordance with Theorem 3. Importantly, these sets treat $M$ as known. This is not the case for the confidence sets targeted by $\hat{\text{CI}}_{\hat{R}}$. Here, $M$ is treated as unknown. The sets are still constructed in accordance with Theorem 3. However, the sample range of the residuals are used in place of $2M$. This method is explored to gauge the robustness of the plug-in strategy, which will often be required in practice. Finally, the average lower and upper endpoint of the $\mathcal{U}$ constructed confidence sets for the case that $M$ is known are also provided for reference. The results of these simulations are available in \textbf{Table 1}, \textbf{Table 2}, and \textbf{Table 3}. For each table, A5 is estimated to hold when $\hat{A} \leq \text{Av}_*$ and is estimated to be violated when '$>$' is shown.
\begin{table}[H]
\centering
\caption{Beta($10, 10$) Simulations} 
\begin{adjustbox}{width=1\textwidth}
\begin{tabular}{lccccccccc}
  \hline
   \\\\[-3.5\medskipamount]
 & $n$ & $\phi_n$ & Mean Lower Endpoint & Mean Upper Endpoint & $\hat{\text{CI}}_{Wald}$ & $\hat{\text{CI}}_{\mathcal{U}}$ & $\hat{A}$&   & $\text{Av}_*$ \\ 
  \\\\[-3.5\medskipamount]
 \hline
  \\\\[-3.5\medskipamount]
 & 100 & 0 & 0.42159 & 0.57841 & 0.92 & 1 & 10.52 & $<$ & 10.60 \\ 
   &  & 0.06 $\dagger$ & 0.42159 & 0.57841 & 0.51 & 0.99 & 10.60 & $=$ & --- \\ 
   &  & 0.1 & 0.42158 & 0.57840 & 0.41 & 0.97 & 10.66 & $>$ & --- \\ 
   &  & 0.2 & 0.42158 & 0.57840 & 0.29 & 0.88 & 10.81 & $>$ & --- \\ 
   & 500 & 0 & 0.46486 & 0.53499 & 0.92 & 1 & 192.57 & $<$ & 194.25 \\ 
   &  & 0.01 $\dagger$ & 0.46476 & 0.53489 & 0.55 & 0.996 & 193.65 & $<$ & --- \\ 
   &  & 0.05 & 0.46458 & 0.53471 & 0.29 & 0.84 & 198.45 & $>$ & --- \\ 
   &  & 0.1 & 0.46444 & 0.53457 & 0.2 & 0.68 & 204.8 & $>$ & --- \\ 
   & 1500 & 0 & 0.47974 & 0.52023 & 0.92 & 1 & 9059.27 & $<$ & 9132.95 \\ 
   &  & 0.004 $\dagger$ & 0.47972 & 0.52021 & 0.51 & 0.99 & 9128.91 & $<$ & --- \\ 
   &  & 0.01 & 0.47971 & 0.52020 & 0.36 & 0.93 & 9236.57 & $>$ & --- \\ 
   &  & 0.02 & 0.47969 & 0.52018 & 0.26 & 0.80 & 9420.2 & $>$ & --- \\ 
    \\\\[-3.5\medskipamount]
  \hline
\end{tabular}
\end{adjustbox}
\caption*{\small The $\dagger$ symbol denotes the predicted threshold value s.t. coverage will begin to falter for fully connected linear dependency graphs.}
\end{table}
\begin{table}[H]
\centering
\caption{Beta($\alpha, \alpha$) Simulations: $n=500, \phi_n = .1$} 
\begin{adjustbox}{width=1\textwidth}
\begin{tabular}{lccccccccc}
  \hline
 \\\\[-3.5\medskipamount]
 & $\alpha$ & Threshold & Mean Lower Endpoint & Mean Upper Endpoint &  $\hat{\text{CI}}_{Wald}$ &  $\hat{\text{CI}}_{\mathcal{U}}$ & $\hat{A}$ &  & $\text{Av}_*$ \\ 
  \\\\[-3.5\medskipamount]
  \hline
   \\\\[-3.5\medskipamount]
& 10 & 0.012 & 0.46444 & 0.53457 & 0.203 & 0.68 & 204.79 & $>$ & 194.25 \\ 
 & 25  & 0.032 & 0.46462 & 0.53475 & 0.203 & 0.88 & 8.22 & $>$ & 8.21 \\ 
 & 50 & 0.065 & 0.46471 & 0.53484 & 0.204 & 0.97 & 2.86 & $=$ & 2.86 \\ 
 & 100  & 0.132 & 0.46477 & 0.53491 & 0.204 & 0.997 & 1.69 & $=$ & 1.69 \\ 
  \\\\[-3.5\medskipamount]
   \hline
\end{tabular}
\end{adjustbox}
\caption*{\small The 'Threshold' column represents the predicted breakdown point at $\alpha$ with $n$ held constant.}
\end{table}
\begin{table}[H]
\centering
\caption{Regression Simulations}
\begin{adjustbox}{width=1\textwidth}
\begin{tabular}{rcccccccccc}
  \hline
   \\\\[-3.5\medskipamount]
$\beta_0=20$  &$n$ & $\phi_*$ & Mean Lower Endpoint & Mean Upper Endpoint & $\hat{\text{CI}}_{Wald}$ & $\hat{\text{CI}}_{\mathcal{U}}$ & $\hat{\text{CI}}_{\hat{R}}$ & $\hat{A}$ &  & $\text{Av}_*$ \\
 \\\\[-3.5\medskipamount]
  \hline
   \\\\[-3.5\medskipamount]
 & 100 & 0 & 15.458 & 24.556 & 0.884 & 1 & 1 & 1.047 & $<$ & 1.262 \\ 
   &  & 0.05 & 15.459 & 24.557 & 0.823 & 1 & 0.999 & 1.066 & $<$ & --- \\ 
   &  & 0.1 & 15.460 & 24.558 & 0.770 & 1 & 0.995 & 1.086 & $<$ & --- \\ 
   &  & 0.15 & 15.461 & 24.559 & 0.725 & 1 & 0.987 & 1.106 & $<$ & --- \\ 
   & 500 & 0 & 17.934 & 22.063 & 0.904 & 1 & 1 & 1.044 & $<$ & 1.247 \\ 
   &  & 0.05 & 17.933 & 22.061 & 0.672 & 1 & 0.990 & 1.144 & $<$ & --- \\ 
   &  & 0.1 & 17.932 & 22.061 & 0.549 & 0.993 & 0.956 & 1.254 & $>$ & --- \\ 
   &  & 0.15 & 17.931 & 22.060 & 0.475 & 0.978 & 0.914 & 1.374 & $>$ & --- \\ 
   & 1500 & 0 & 18.833 & 21.170 & 0.902 & 1 & 1 & 1.046 & $<$ & 1.258 \\ 
   &  & 0.05 & 18.836 & 21.173 & 0.487 & 0.979 & 0.938 & 1.382 & $>$ & --- \\ 
   &  & 0.1 & 18.838 & 21.175 & 0.366 & 0.908 & 0.835 & 1.840 & $>$ & --- \\ 
   &  & 0.15 & 18.839 & 21.176 & 0.306 & 0.839 & 0.753 & 2.474 & $>$ & --- \\ 
    \\\\[-3.5\medskipamount]
   \hline
    \\\\[-3.5\medskipamount]
$\beta_1=10$  &$n$ & $\phi_*$ & Mean Lower Endpoint & Mean Upper Endpoint & $\hat{\text{CI}}_{Wald}$ & $\hat{\text{CI}}_{\mathcal{U}}$ & $\hat{\text{CI}}_{\hat{R}}$ & $\hat{A}$ &  & $\text{Av}_*$ \\ 
 \\\\[-3.5\medskipamount]
 \hline
  \\\\[-3.5\medskipamount]
 & 100 & 0 & 6.836 & 13.151 & 0.885 & 1 & 1 & 1.023 & $<$ & 1.111 \\ 
   &  & 0.05 & 6.835 & 13.150 & 0.854 & 1 & 0.999 & 1.028 & $<$ & --- \\ 
   &  & 0.1 & 6.835 & 13.150 & 0.823 & 1 & 0.997 & 1.033 & $<$ & --- \\ 
   &  & 0.15 & 6.834 & 13.149 & 0.794 & 1 & 0.996 & 1.037 & $<$ & --- \\ 
   & 500 & 0 & 8.546 & 11.451 & 0.903 & 1 & 1 & 1.022 & $<$ & 1.114 \\ 
   &  & 0.05 & 8.546 & 11.452 & 0.758 & 1 & 0.998 & 1.047 & $<$ & --- \\ 
   &  & 0.1 & 8.547 & 11.452 & 0.662 & 1 & 0.988 & 1.072 & $<$ & --- \\ 
   &  & 0.15 & 8.547 & 11.453 & 0.594 & 0.997 & 0.971 & 1.098 & $<$ & --- \\ 
   & 1500 & 0 & 9.189 & 10.807 & 0.903 & 1 & 1 & 1.024 & $<$ & 1.114 \\ 
   &  & 0.05 & 9.188 & 10.805 & 0.601 & 0.998 & 0.985 & 1.097 & $<$ & --- \\ 
   &  & 0.1 & 9.187 & 10.804 & 0.482 & 0.976 & 0.936 & 1.176 & $>$ & --- \\ 
   &  & 0.15 & 9.186 & 10.804 & 0.409 & 0.942 & 0.882 & 1.260 & $>$ & --- \\ 
    \\\\[-3.5\medskipamount]
   \hline
\end{tabular}
\end{adjustbox}
\end{table}
\textbf{Table 1} shows that the coverage of the confidence sets resulting from A5 and Theorem 3 starts to dissipate in quality around the predicted points, and also when A5 ceases to be true. \textbf{Table 2} provides evidence that, even if A5 does not hold exactly, the robustness of the confidence sets that result from its employment is a function of the extrema of each $\mathcal{S}_i$. In general, random variables with higher absolute extremes allow for more dense dependencies to exist without undermining inference: a fact that has been apparent since at least Hoeffding's lemma. Notably, \textbf{Table 3} substantiates the utility of the plug-in strategy. Although, as expected, it is not as robust as when the upper extreme of the support is known, the simulation evidence shows that the confidence sets that use the sample range maintain their semi-conservative nominal coverage value while A5 holds. Each table also shows that the standard methods employed absolutely fail to uphold nominal coverage. Overall, the key point supported by these experiments is that A5 is a feasible condition that allows for cogent finite sample inference in many important settings, and even when every outcome variable is statistically dependent. Additive statistics---from linear models and from estimating equations---are a critical tool for scientific discovery. Our job here was to demonstrate that they remain dependable enough for cogent finite sample inference in complicated modern settings. This has been accomplished, at least in some dimension.

\subsection{Carbon Dioxide and Global Warming}

In this section, we estimate the association between global temperature change and carbon dioxide levels between the years of 1979 and 2022. Monthly averages for global carbon dioxide levels ($\text{CO}^2$; ppm) and global temperature anomalies (Temp; °C) were acquired from the Global Monitoring Laboratory and the Goddard Institute for Space Studies respectively \citep{carbon, temp}. More information pertaining to the latter source and the methods utilized for measurement are obtainable elsewhere \citep{lenssen2019improvements}. Although the estimation of causality is beyond the scope of this analysis, monthly data for an industrial production index for the G-20 countries (Index) was also accessed for these years to act as a rudimentary adjusting variable \citep{g20}. The index score for each country was summed for each month to construct a single index. Alternative metrics were also considered for this analysis. For instance, measurements on global population growth and the proportion of landmass covered by forests each year were also obtained. However, they were not utilized due to issues of multicollinearity. The approach of this paper is relevant for this question since, although useful and informative as conceptualizations, there is no reason for any complex ecological time series to strictly abide by the neat schematics of a typical moving average or auto-regressive model. Unknown unknowns likely impact the process across time and location. Even if auto-correlations diminish, this does not imply that dependencies do.

\paragraph{Methods} The cardinal association is investigated w.r.t. two units of time: monthly and yearly. Only one time lag is utilized in both cases. This provided $n=527$ observations for the monthly analysis and $n=43$ observations for the yearly. For the latter exploration, all monthly variables are averaged for each year. For the former, a categorical variable for the season (December-February: Winter; March-May: Spring; June-August: Summer; September-November: Fall) is constructed to adjust for additional time trends. $\text{CO}^2$ and Index are also log-transformed for both analyses. Two baseline models are estimated: $\text{Temp}_t = \beta_0 + \beta_1 \text{Temp}_{t-1} + \beta_2 \text{log} \{ \text{CO}_{t-1}^2 \} + \beta_2 \text{log} \{ \text{Index}_{t-1} \}+ \sum_{i=1}^{3} \alpha_i \text{Season}_i + \epsilon_t$ and $\overline{\text{Temp}}_t = \beta_0 + \beta_1 \overline{\text{log} \{ \text{CO}^2 \}}_{t-1} + \beta_2 \overline{\text{log} \{ \text{Index} \}}_{t-1} + \epsilon_t$. Non trend adjusting variables are dropped from the model if they do not induce at least a $10$ percent change in the magnitude of the estimate. All models are fitted via ordinary least squares with a Type I error rate of $\alpha = .05$.

Confidence sets are constructed by way of Theorem 3, which also applies to stochastic regressions. For instance, when $\mathbf{W} = (\mathbf{X}^{\top} \mathbf{X})^{-1} \mathbf{X}^{\top}$, then $\mathbf{B}_s = \beta_s + \sum_{i=1}^n W_{s, i} \epsilon_i$ is additive in $Z_{s, i} = W_{s, i} \epsilon_i$. Strict stationarity, regularity, and symmetry of the $\{ Z_{s, i} \}_{i \in I}$ is sufficient for the application of the theorem in this analysis. Pertinently, $\mathbf{B}_s$ is not unbiased for $\beta_s$ for the first model since $\text{Temp}_{t-1}$ appears on the right-hand side of the equation. However, it can still be consistent. Consistency is obtained if A3 applies to the mean degrees of the graphs associated with $\{ Z_{s, i} \}_{i \in I}$ and $\{ \epsilon_i \}_{i \in I}$. All conditions are feasibly checked with time series plots, histograms, and empirical CDF plots for $\{W_{s, i} \hat{e}_i \}_{i \in I}$ and $\{ \hat{e}_i \}_{i \in I}$. Here, we use the plug-in estimator $\hat{R}_s = \underset{i \in I}{\text{max}}(W_{s, i} \hat{e}_i) - \underset{i \in I}{\text{min}}(W_{s, i} \hat{e}_i)$. Again, this will underestimate the true range for finite samples. However, it is a consistent estimator under the mild regularity conditions we suppose. Proof of this is offered in the supplementary material. Moreover, the magnitude of the underestimation is not likely to undercut the conservatism of the method. In accordance with Theorem 3, then, confidence sets have the form $\mathbf{B}_s \pm \sqrt{n} \hat{R}_s \cdot \{6^{-1} \text{log} (2/\alpha) \}^{1/2}$. We use the same rule of thumb to gauge the robustness of A5: $\phi_{s, n} \leq (n-1)^{-1} \cdot  \{ 12^{-1} S_{W_s \hat{e}}^{-2} \hat{R}^2_s -1 \}$, where $S_{W_s \hat{e}}^{2}$ is the sample variance of the $W_{s, i} \hat{e}_i$. This approximate bound is compared to auto-correlation estimates. Using the sample range for the calculation of the rule of thumb helps to counter its limitation as a plug-in.
\paragraph{Results} Including $\text{log} \{ \text{Index}_{t-1} \}$ resulted in an approximate .06 decrease in the effect estimate for $\text{log} \{ \text{CO}_{t-1}^2 \}$ after also adjusting for the season. This did not meet the specified threshold. Hence, $\text{log} \{ \text{Index}_{t-1} \}$ was dropped from the model. Season was retained to adjust for time trends. Per every unit increase in the logarithm of the previous month's $\text{CO}^2$ levels, there is an estimated mean increase of 1.53 °C ($\geq 95$\% CI: .25, 2.8) in the next month's global temperature after accounting for the season and the previous month's global temperature. This effect was larger than the estimated .6 °C change in average global temperature ($\geq 95$\% CI: .24, .95) per unit increase in the previous month's temperature. For the annual model, $\overline{\text{log} \{ \text{Index} \}}_{t-1}$ was found to decrease the effect of interest by approximately .07 percent and was therefore removed. Per each unit change in the logarithm of the average yearly $\text{CO}^2$ level of the previous year, the mean global temperature of the next year increased by an estimated 3.87 °C ($\geq 95$\% CI: 3, 4.73).
\paragraph{Model checking} \textbf{Figure 1} demonstrates the time series, empirical CDF, histogram, and auto-correlation plots for the $W_{s, i} \hat{e}_i$ related to $\text{CO}^2$ for each model. Both histograms provide evidence of symmetric and roughly regular distributions since they are unimodal, bell-shaped, and (mostly) vary in a smooth fashion over an interval with endpoints that are roughly symmetric about zero. The empirical CDF plots also display roughly the same amount of area above and below the curve, providing evidence of $\mathcal{U}$ status. For our rule of thumb, we require $\phi_{s, n}$ to be bounded by $0.014$ and $0.018$ respectively for the monthly and annual models. It is reasonable to assert that this bound is fulfilled due to the positive-negative oscillating character of the auto-correlation plots. With the number of lags set to $\lfloor 10 \text{log}_{10} (n) \rfloor$ and $\lfloor 2^{-1} (n-1) \rfloor$, $\hat{\phi}_{s, n} = -.007$ and $-.002$ respectively for the monthly data and $-.03$ and $-.02$ for the annual data, of course under the supposition of strict stationarity. The time series plot for the monthly data shows no remarkable departure from this latter supposition, although there does appear to be some departures for the annual data. \textbf{Figure 2} shows the auto-correlation and time series plots for the residuals alone. These plots appear consistent with the assumption of stationarity and sub-linear mean dependencies.
\begin{figure}[H]
\centering
\caption{Model Diagnostics for $\text{CO}^2$ Weighted Residuals}
\includegraphics[scale = .5]{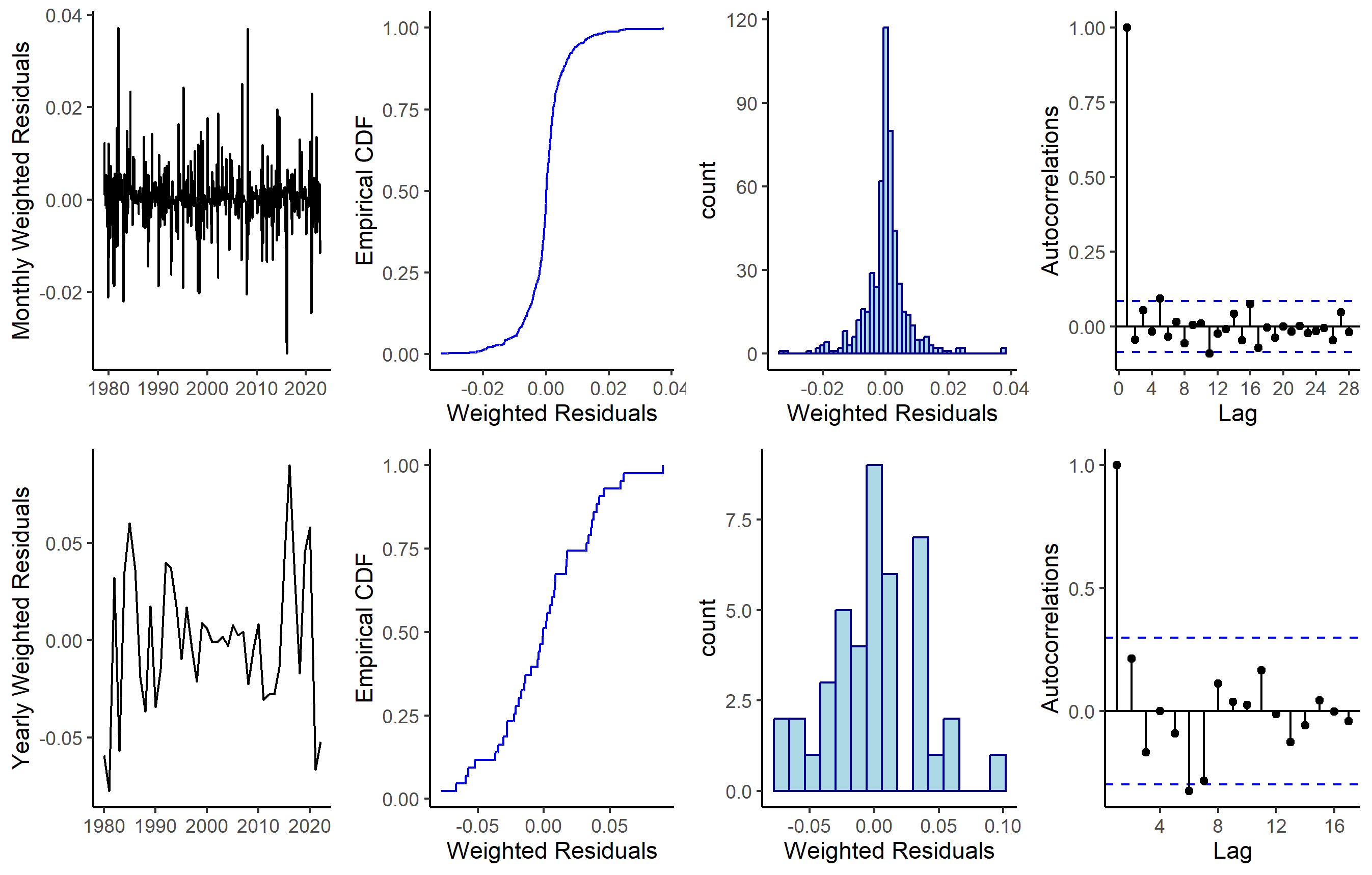}
\caption*{\small The auto-correlation plots make use of the \textit{acf} function of the \textbf{stats} package in \textbf{R} under default settings \citep{stats}. More lags are considered when estimating $\phi_{s, n}$.} 
\end{figure}
\begin{figure}[H]
\centering
\caption{Model Diagnostics for $\text{CO}^2$ Residuals}
\includegraphics[scale = .5]{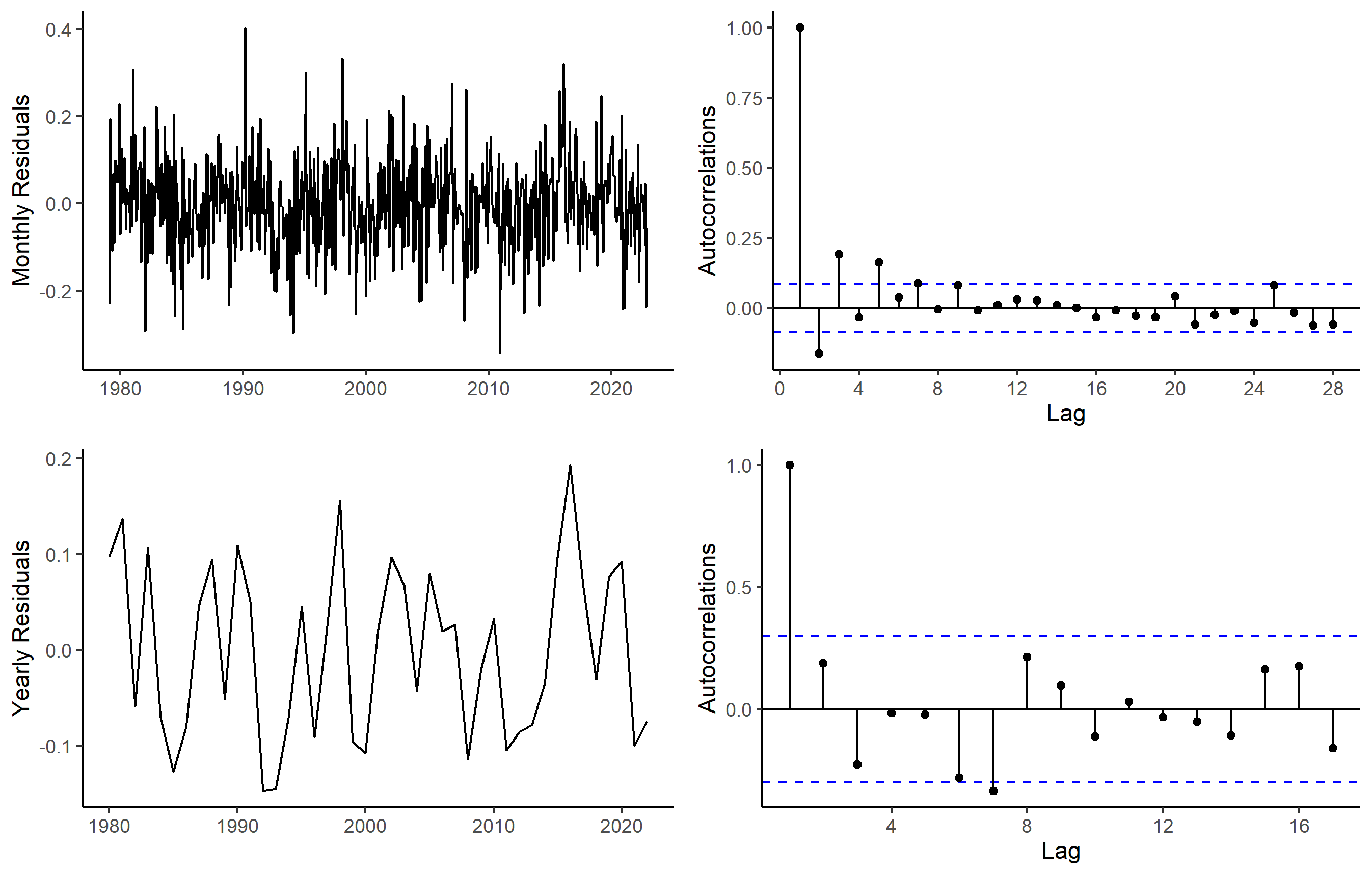}
\end{figure}
Altogether, there is sufficient evidence that the previous year's monthly or yearly average $\text{log}(\text{CO}^2)$ levels predict a non-negligible increase in the average global temperature of the following month or year. Since this approach places no specific constraint on the underlying structure of statistical dependencies, e.g., it does not adopt preconceptions about strong or weak mixing or some form of $m$-dependence, the results of this analysis are arguably stronger. They arrive with fewer theoretical caveats. Nonetheless, important limitations lurk. Model diagnostics still make use of the residuals, which are a biased and constrained representation of the true error. This situation, however, applies to all regression diagnostic procedures that make use of the residuals and does not rule out their use for the estimation of restricted portions of auto-correlation. Ultimately, these constraints complicate, but do not eliminate, the utility of these objects' employment as a diagnostic mechanism.
\section{Concluding remarks\label{sec:7}}
This manuscript accomplished three main objectives. First, it established a small set of related identities for the variance of a vector of random sums. These identities require reasoning about the mean number of outcome variables correlated within a sample and their average correlation only. Since the true dependency structure of any collection of random variables is safely posited to be unknown and empirically unidentifiable in whole, removing the strict need to specify $n \choose 2$ parameters is useful. It was shown that a researcher can elect to reason about these two intuitive summary constants instead, or that she can employ them in conjunction with popular covariance modeling methods to capture at least some of the variability that is missed by an invalid specification. Although these constants are unknown, so are the $n \choose 2$ covariance parameters that statisticians specify on a daily basis. Furthermore, the cogent defense of a conservative choice for these values is a much less demanding task than the alternative in a majority of circumstances. Pertinently, these identities were also used to affirm the consistency of additive estimators---including cluster-robust variance estimators w.r.t. their identified portion of the overall variance---under the very general condition that the average number of correlated variables in a sample is asymptotically sub-linear as a function of $n$. For cluster variance estimators, this was shown to be the case even when no valid partition of the sample exists. A second accomplished objective was to extend these results to estimating equations and hence to the estimators of statistical approaches such as the generalized linear model.

The third and most important contribution of this paper was to prove a sharpened version of Hoeffding's inequality for a class of commonly encountered random variables. Notably, it was proven that this inequality can apply even when every single outcome variable in a sample is statistically dependent, insofar as the magnitude of their average correlation is at least moderately controlled. This result is certainly valuable for many fields where the assumption of weak or local dependence is especially untenable, such as in climate science, social network analysis, finance, and really any ecological or sociological domain. That said, more work is due. Like all statistical models, the valid application of this inequality relies on a set of assumptions that can only be feasibly verified in practice. Although imperfect, however, the diagnostic processes available are equivalent to those used to check the assumptions of common regression models. In this sense, the approach established here at least possesses nomological validity.

\section*{CRediT Statement}

\textbf{Shane Sparkes}: Conceptualization, Methodology, Validation, Formal analysis, Data Curation, Writing - Original Draft, Writing - Review and Editing, Visualization. \textbf{Lu Zhang}: Writing - Review and Editing, Supervision, Project administration.

\section*{Acknowledgments}

We would like to thank the referees for their thoughtful commentary, encouragement, and insights. Moreover, we would also like to acknowledge Dr. Thomas Valente and Dr. Juan Pablo Lewinger for their commentary on a first scalar form of the variance identity.

%\section*{References}

% To ensure accuracy, get them from MathSciNet whenever possible. Typeset them with BibTeX using JMVA's style file, \texttt{myjmva.bst}.
%\bibliography{}
\bibliographystyle{myjmva}
%\begin{thebibliography}
\bibliography{ms}
%\end{document}
\ifarXiv
    \foreach \x in {1,...,\numbersupplementpages}
    {
        \includepdf[pages={\x}]{\supplementfilename}
    }
\fi

\end{document}